\def\qed{\vbox{\hrule\hbox to 6pt{\vrule height 6pt\hfill\vrule}\hrule}\vskip6mm} 
\newtheorem{thm}{Theorem}[section]
\newtheorem{cor}[thm]{Corollary}
\newtheorem{defi}{Definition}[section]
\numberwithin{equation}{section}
\begin{document}
\frenchspacing

\title{\textbf{Formulation of  Convergence and Continuity
in Variation of Sets
in a Different Way from Sequences of Sets
and Correspondence}}
\date{}
\author{By Takefumi Fujimoto\thanks{
%Department of International Studies,
%School of Frontier Sciences, The University of Tokyo
%\,\,
E-mail address: the-gift-of-31t@hotmail.co.jp; 2876681407@edu.k.u-tokyo.ac.jp}}
\maketitle

\vspace*{-25pt}

\begin{abstract}
This paper treats the variation of sets.
We attempt to formulate convergence and continuity of set-valued functions
in a different way from the theories on sequences of sets and correspondence.
In the final section, we also attempt to define differentiation of set-valued functions
in a Euclidean space by bijection between two sets.
\end{abstract}
\begin{center}
\textit{Key Words and Phrases}
\end{center}
\vspace{-6pt}
set-valued functions, sequences of sets, correspondence,
convergence of set-valued functions at infinity,
convergence of set-valued functions at a point,
continuity of set-valued functions at a point,
differentiation of set-valued functions in a Euclidean space

\tableofcontents

\section{Introduction}
\label{intro}

%{\fontsize{6pt}{6pt} \selectfont aaa}

This paper studies a theory on the variation of sets.
In the paper, we deal with a set-valued function
that are defined on the real line. The paper focuses on the following three
concepts: \textit{convergence of set-valued functions at infinity, convergence of
set-valued functions at a point, and continuity of set-valued functions
at a point}.
We define these concepts in a different way from 
the theories on sequences of sets and correspondence,
and investigate properties of them.

We should mention the convergence of sequences of sets
as previous works on the variation of sets.
Let $\{ A_n \}_{n=1}^\infty$ be a sequence of sets
$A_1, A_2, \dotsc, A_n, \dotsc$ and let $A$ be a set.
Then, we say that $\{ A_n \}_{n=1}^\infty$ \textit{converges to} $A$
if {\tiny $\displaystyle \bigcap_{n=1}^{\infty} \bigcup_{k=n}^{\infty}$} $A_k$ =
$A$ = {\tiny
$\displaystyle \bigcup_{n=1}^{\infty} \bigcap_{k=n}^{\infty}$} $A_k$.
The set {\tiny $\displaystyle \bigcap_{n=1}^{\infty} \bigcup_{k=n}^{\infty}$} $A_k$ is called the \textit{superior limit}, and
the set {\tiny
$\displaystyle \bigcup_{n=1}^{\infty} \bigcap_{k=n}^{\infty}$} \nolinebreak$A_k$
is called the \textit{inferior limit}. Here, we point two facts.
First, the convergence of sequences of sets is characterized by
the equality of the two limits. Second, sequences of sets can be interpreted
as the dependence of a set on natural numbers.

This paper differs from the study of sequences of sets in three ways.
First, the paper considers a set that changes depending on a real variable,
a set-valued function.
We use notations $A(t)$, $B(t)$, $C(t)$, and so forth to denote set-valued functions
that depend on the real variable $t$.
Second, convergence of set-valued functions at infinity, which we shall deal with in the paper,
is defined by a statement.
Note that the notation $A \triangle B$
represents the symmetric difference of the sets $A$ and $B$, and that
$A \triangle B = \{ A \cup B \} \cap \{ A \cap B\}^c$. Then, we define
the convergence of set-valued functions at infinity as follows:

\begin{defi} \label{def1.1}
Let $A(t)$ be a set-valued function that depends on the variable $t$, and let
$A$ be a set. We say that $A(t)$ converges to $A$ as $t \to \infty$ if
for all $t_i$ and for all $x \in A(t_i) \triangle A$,
there exists $t_j$ such that for all $t \ge t_j$
we have  $x \not\in A(t) \triangle A$.
\end{defi}
Third, convergence and continuity of set-valued functions at a point is also
defined in a similar way. We define the convergence and continuity
of set-valued functions at a point as follows:

\begin{defi}
Let $A(t)$ be a set-valued function that depends on the variable $t$, and let
$A$ be a set. We say that $A(t)$ converges to $A$ as $t \to t_0$ if
for all $t_i \,\, (t_i \neq t_0)$ and for all $x \in A(t_i) \triangle A$,
there exists $\delta >0$ such that for all $t \in \{ s \bigm| 0< |s - t_0| < \delta \}$
we have  $x \not\in A(t) \triangle A$.
\end{defi}

\begin{defi}\label{cont1.3}
Let $A(t)$ be a set-valued function that depends on the variable $t$, and let
$A$ be a set. We say that $A(t)$ is continuous at $t_0$ if
for all $t_i$ and for all $x \in A(t_i) \triangle A(t_0)$,
there exists $\delta >0$ such that for all $t \in \{ s \bigm| |s - t_0| < \delta \}$
we have  $x \not\in A(t) \triangle A(t_0)$.
\end{defi}
The continuity in Definition~\ref{cont1.3} is not equivalent to
that of correspondence.
We could say that Definition~\ref{cont1.3} requires more strictness
in the sense of continuity than correspondence.
We shall clarify this in Subsection~\ref{sec4.exa}.

Concerning Definition~\ref{def1.1}, we should add that the previous work
of sequences of sets has already investigated the
relation between the symmetric difference and the convergence of sequences of sets
(See for examples Niizeki~\cite{Ni2} and~\cite{Niizeki}).
If a sequence of sets $\{ A_n \}_{n=1}^\infty$ converges to a set $A$
in the sense
{\tiny $\displaystyle \bigcap_{n=1}^{\infty} \bigcup_{k=n}^{\infty}$} $A_k$ =
$A$ = {\tiny
$\displaystyle \bigcup_{n=1}^{\infty} \bigcap_{k=n}^{\infty}$} $A_k$,
we denote it by $\displaystyle \lim_{n \to \infty} A_n = A$ here.
In the previous work, it is revealed that  $\displaystyle \lim_{n \to \infty} A_n = A$
is equivalent to $\displaystyle \lim_{n \to \infty} (A_n \triangle A) = \emptyset$.
However, note that both $\displaystyle \lim_{n \to \infty} A_n = A$ and
$\displaystyle \lim_{n \to \infty} (A_n \triangle A) = \emptyset$
mean the equality of the superior limit and the inferior limit:
$\displaystyle \lim_{n \to \infty} A_n = A$ means both of the two limits of
the sequence $\{ A_n \}_{n=1}^\infty$ coincide with the set A;
$\displaystyle \lim_{n \to \infty} (A_n \triangle A) = \emptyset$
means both of the two limits of
the sequence $\{ A_n \triangle A \}_{n=1}^\infty$ coincide with the empty set $\emptyset$.
Hence, the paper differs from the previous work in that
it does not define convergence of ``changing sets" at infinity by
the equality of the two limits.

Here, we adapt Definition~\ref{def1.1} to sequences of sets, that is,
\begin{defi} \label{def1.4}
Let $\{ A_n \}_{n=1}^{\infty}$ be a sequence of sets
and let $A$ be a set. We say that $\{ A_n \}_{n=1}^{\infty}$ converges to $A$ if
for all $n_i \in \mathbb{N}$ and for all $x \in A_{n_i} \triangle A$,
there exists $n_j \in \mathbb{N}$ such that for all $n \ge n_j$
we have  $x \not\in A_n \triangle A$.
%and we write $\displaystyle \lim_{n \to \infty}A_n = A$.
\end{defi}
In reality, we can prove that Definition~\ref{def1.4} is equivalent
to the equality of the two limits
of the sequence $\{ A_n \}_{n=1}^\infty$. 
This fact suggests that
Definition~\ref{def1.1} represents one of the intuitive ideas on convergence
of ``changing sets" at infinity.
Here, we would like to stress that
Definition~\ref{def1.1} is formulated by a statement, not by an equality.
Thus, the paper proves theorems on
convergence of sequences of sets
in a different way from the theory based on the equality of the two limits.
Therefore, the equivalence also suggests that
the paper provides a different way of proof in the theory
of sequences of sets.
\footnote{
We explain the intuitive idea in Subsection~\ref{sec2.def}.
For the proof of the equivalence, see Subsection~\ref{adapt}.}

The plan of this paper is as follows.
We deal with the convergence of set-valued functions at infinity in Section~\ref{sec02},
the convergence of set-valued functions at a point in Section~\ref{sec03},
and the continuity of set-valued functions at a point in Section~\ref{sec04}.
Each of the sections from Section~\ref{sec02} to Section~\ref{sec04} puts
a definition on the respective concepts, gives examples, and then proves theorems.
As examples of the theorems,
we enumerates important theorems derived from Definition~\ref{def1.1}
in the initial part of Section~\ref{sec02}.
In Section~\ref{sec05}, we investigate other possibilities of development in the
study of set-valued functions. Section~\ref{sec05} first introduces a method to describe
the behavior of set-valued functions by one-to-one correspondence. By this method,
we then define the differentiation of set-valued functions in a Euclidean space. In the
rest of Section~\ref{sec05}, we attempt an extension to multivariable 
set-valued functions and an adaption to sequences of sets.
In the subsection on the adaption, we prove
Definition~\ref{def1.4} is equivalent to
the equality of the two limits of the sequence $\{ A_n \}_{n=1}^\infty$.
Finally, Appendix adds other proofs of two theorems in Section~\ref{sec02}.

Before closing Introduction, we make settings of the paper.

\paragraph{Settings}
Throughout the paper, we suppose that set-valued functions depend on a real variable $t$.
We denote them by $A(t)$, $B(t)$, $C(t)$, and so forth.
To avoid problems with complicated domains, assume that set-valued functions are
defined on $\mathbb{R}$ unless expressly stated otherwise.

We use a lower-case letter $x$ to denote an element belonging to a set or a
set-valued function. We suppose that we are working in a fixed universe $X$
that is not empty.

In sets $A$ and $B$, $A \subset B$ allows the possibility $A=B$.
If $A$ is a proper subset of $B$, we write $A \subsetneq B$.

Finally, we let $A \triangle B$ be the symmetric difference of the sets $A$ and
$B$. The symmetric difference $A \triangle B$ is defined by
\begin{equation*}
A \triangle B = \{ A \cup B \} \cap \{ A \cap B\}^c.
\end{equation*}
Intuitively, the symmetric difference $A \triangle B$ is the non-common part
of $A$ and $B$ inside the union $A \cup B$. In this paper, we use the following
three properties of symmetric difference:
\begin{enumerate}
\item $A \triangle B = A^c \triangle B^c$,
\item $A \triangle B = \{ A \cap B^c \} \cup \{ A^c \cap B \}$,
\item $\{ A \cup B \} \triangle \{C \cup D \} \subset
\{A \triangle C \} \cup \{ B \triangle D \}$,
\end{enumerate}
where $A$, $B$, $C$, $D$ are sets.
\footnote{
Part $(3)$ is generalized to
\begin{equation*}
\left( \bigcup_{\lambda \in \Lambda} A_{\lambda} \right)
\triangle \left( \bigcup_{\lambda \in \Lambda} B_{\lambda} \right)
\subset \bigcup_{\lambda \subset \Lambda} \left( A_{\lambda} \triangle B_{\lambda}
\right),
\end{equation*} %adapt size of big union
where the index $\lambda$ is an element of the index set $\Lambda$.
}

\section{Convergence of Set-Valued Functions at Infinity}
\label{sec02}
%\subsection{Definition}\label{sec2.def}
%\subsection{Examples}\label{sec2.exa}
%\subsection{Theorems and Proofs}\label{sec2.thm}

%Definition1.1 \ref{def2.conver}
%Definition2.2 \ref{def2.const}
%Definition2.3 \ref{expa}
%Definition2.4 \ref{shri}
%Definition2.5 \ref{supre}
%Definition2.6 \ref{infim}

%Theorem2.1 \ref{thm2.start}
%Theorem2.7 \ref{thm.uni}
%Theorem2.10 \ref{thm.wakyo}
%Theorem2.12 \ref{thm.exsup}
%Theorem2.13 \ref{thm2.shinf}
%Theorem2.14 \ref{thm2.squ}

Section~\ref{sec02} discusses the convergence of set-valued functions at infinity.
We define it in Subsection~\ref{sec2.def}, give examples in Subsection~\ref{sec2.exa},
and then move to theorems and their proofs in Subsection~\ref{sec2.thm}.
Subsection~\ref{sec2.thm} aims at proving:
\begin{itemize}
\item a set-valued function converges if and only if its complement converges
(Theorem~\ref{thm2.start}),
\item a set-valued function uniquely converges at infinity (Theorem~\ref{thm2.uni}),
\item the union and intersection of convergent set-valued functions
are convergent (Theorem~\ref{thm2.wakyo}),
\item a set-valued function that is ``expanding" on $\mathbb{R}$
converges to its ``supremum" (Theorem~\ref{thm2.exsup});
a set-valued function that is ``shrinking"
on $\mathbb{R}$ converges to its ``infimum" (Theorem~\ref{thm2.shinf}),
\item a set-valued function ``squeezed" between two set-valued functions
converges if the two set-valued functions converge to the same set
(Theorem~\ref{thm2.squ}).
\end{itemize}
To obtain the above conclusions,
we need other theorems.

\subsection{Definition}
\label{sec2.def}

To begin with, we define the convergence of set-valued functions at infinity.

\begin{defi} \label{def2.conver}
Let $A(t)$ be a set-valued function that depends on the variable $t \in \mathbb{R}$,
and let $A$ be a set. We say that $A(t)$ converges to $A$ as $t \to \infty$ if
for all $t_i \in \mathbb{R}$ and for all $x \in A(t_i) \triangle A$,
there exists $t_j \in \mathbb{R}$ such that for all $t \ge t_j$
we have  $x \not\in A(t) \triangle A$,
and we write $\displaystyle \lim_{t \to \infty}A(t) = A$.
Here, we call the set $A$ \textit{limit} of $A(t)$ at infinity.
\end{defi}

We explain the intuitive idea of Definition~\ref{def2.conver}.
Suppose that $A(t)$ converges to limit $A$ as $t \to \infty$.
We would agree that
the convergence of $A(t)$ to $A$ intuitively means $A(t)$ gets ``closer"
to $A$ as $t$ gets larger.
This suggests that the non-common part of $A(t)$ and $A$
inside the union $A(t) \cup A$ gets ``smaller" and ``closer" to
empty set $\emptyset$ as $t$ gets larger.
From this idea, we could say that the symmetric difference
$A(t) \triangle A$, the non-common part, gets ``smaller" and
``closer" to $\emptyset$ as $t$ gets larger.

Figure~\ref{sec2_1_def} pictures this idea. Take an arbitrary point $t_i$
and choose an arbitrary element $x$ from the symmetric difference
$A(t_i) \triangle A$.
At the point $t_i$, the element $x$ belongs to $A(t_i) \triangle A$
(the left-hand portion of Figure~\ref{sec2_1_def}).
For sufficiently large $t_j$, however, the element $x$ does not
belong to $A(t_j) \triangle A$
(the right-hand portion of Figure~\ref{sec2_1_def}),
and also not to $A(t) \triangle A$ for all $t \ge t_j$.

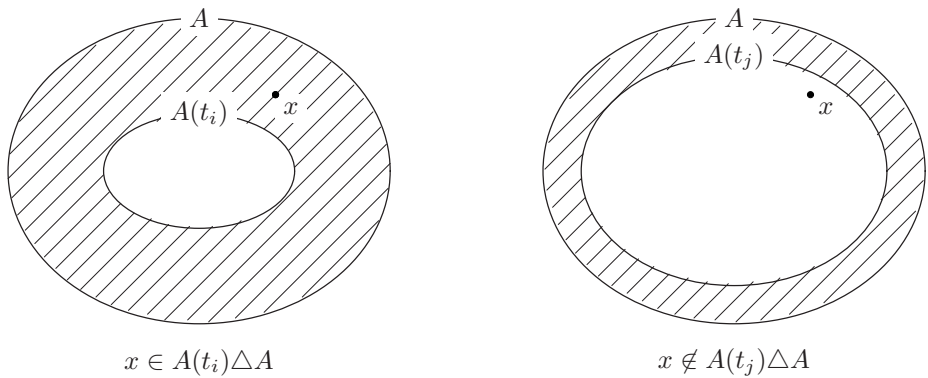
\begin{figure}
\centering
%WinTpicVersion4.28b
{\unitlength 0.1in
\begin{picture}( 48.0000, 18.0000)(  6.0000,-23.3500)
% ELLIPSE 2 0 3 0 Black White
% 4 1600 1400 2600 600 2600 600 2600 600
% 
{\color[named]{Black}{%
\special{pn 8}%
\special{ar 1600 1400 1000 800  0.0000000  6.2831853}%
}}%
% ELLIPSE 2 0 3 0 Black White
% 4 1600 1400 2100 1100 2100 1100 2100 1100
% 
{\color[named]{Black}{%
\special{pn 8}%
\special{ar 1600 1400 500 300  0.0000000  6.2831853}%
}}%
% LINE 3 0 3 0 Black White
% 60 1550 610 620 1540 1660 620 650 1630 1780 620 690 1710 1100 1420 740 1780 1130 1510 790 1850 1190 1570 850 1910 1260 1620 910 1970 1350 1650 980 2020 1440 1680 1060 2060 1550 1690 1140 2100 1670 1690 1230 2130 1820 1660 1330 2150 2510 1090 1420 2180 2550 1170 1540 2180 2580 1260 1650 2190 2600 1360 1790 2170 2590 1490 1940 2140 2540 1660 2140 2060 2460 1020 2100 1380 2410 950 2070 1290 2350 890 2010 1230 2290 830 1940 1180 2220 780 1850 1150 2140 740 1760 1120 2060 700 1650 1110 1970 670 1530 1110 1870 650 1380 1140 1410 630 600 1440 1260 660 610 1310 1060 740 660 1140
% 
{\color[named]{Black}{%
\special{pn 4}%
\special{pa 1550 610}%
\special{pa 620 1540}%
\special{fp}%
\special{pa 1660 620}%
\special{pa 650 1630}%
\special{fp}%
\special{pa 1780 620}%
\special{pa 690 1710}%
\special{fp}%
\special{pa 1100 1420}%
\special{pa 740 1780}%
\special{fp}%
\special{pa 1130 1510}%
\special{pa 790 1850}%
\special{fp}%
\special{pa 1190 1570}%
\special{pa 850 1910}%
\special{fp}%
\special{pa 1260 1620}%
\special{pa 910 1970}%
\special{fp}%
\special{pa 1350 1650}%
\special{pa 980 2020}%
\special{fp}%
\special{pa 1440 1680}%
\special{pa 1060 2060}%
\special{fp}%
\special{pa 1550 1690}%
\special{pa 1140 2100}%
\special{fp}%
\special{pa 1670 1690}%
\special{pa 1230 2130}%
\special{fp}%
\special{pa 1820 1660}%
\special{pa 1330 2150}%
\special{fp}%
\special{pa 2510 1090}%
\special{pa 1420 2180}%
\special{fp}%
\special{pa 2550 1170}%
\special{pa 1540 2180}%
\special{fp}%
\special{pa 2580 1260}%
\special{pa 1650 2190}%
\special{fp}%
\special{pa 2600 1360}%
\special{pa 1790 2170}%
\special{fp}%
\special{pa 2590 1490}%
\special{pa 1940 2140}%
\special{fp}%
\special{pa 2540 1660}%
\special{pa 2140 2060}%
\special{fp}%
\special{pa 2460 1020}%
\special{pa 2100 1380}%
\special{fp}%
\special{pa 2410 950}%
\special{pa 2070 1290}%
\special{fp}%
\special{pa 2350 890}%
\special{pa 2010 1230}%
\special{fp}%
\special{pa 2290 830}%
\special{pa 1940 1180}%
\special{fp}%
\special{pa 2220 780}%
\special{pa 1850 1150}%
\special{fp}%
\special{pa 2140 740}%
\special{pa 1760 1120}%
\special{fp}%
\special{pa 2060 700}%
\special{pa 1650 1110}%
\special{fp}%
\special{pa 1970 670}%
\special{pa 1530 1110}%
\special{fp}%
\special{pa 1870 650}%
\special{pa 1380 1140}%
\special{fp}%
\special{pa 1410 630}%
\special{pa 600 1440}%
\special{fp}%
\special{pa 1260 660}%
\special{pa 610 1310}%
\special{fp}%
\special{pa 1060 740}%
\special{pa 660 1140}%
\special{fp}%
}}%
% STR 2 0 3 0 Black White
% 4 1600 1000 1600 1100 5 0 1 0
% $A(t_i)$
\put(16.0000,-11.0000){\makebox(0,0){{\colorbox[named]{White}{$A(t_i)$}}}}%
\put(16.0000,-11.0000){\makebox(0,0){{\colorbox[named]{White}{\color[named]{Black}{$A(t_i)$}}}}}%
% STR 2 0 3 0 Black White
% 4 1600 500 1600 600 5 0 1 0
% $A$
\put(16.0000,-6.0000){\makebox(0,0){{\colorbox[named]{White}{$A$}}}}%
\put(16.0000,-6.0000){\makebox(0,0){{\colorbox[named]{White}{\color[named]{Black}{$A$}}}}}%
% ELLIPSE 2 0 3 0 Black White
% 4 4400 1400 3400 600 3400 600 3400 600
% 
{\color[named]{Black}{%
\special{pn 8}%
\special{ar 4400 1400 1000 800  0.0000000  6.2831853}%
}}%
% ELLIPSE 2 0 3 0 Black White
% 4 4400 1400 5200 800 5200 800 5200 800
% 
{\color[named]{Black}{%
\special{pn 8}%
\special{ar 4400 1400 800 600  0.0000000  6.2831853}%
}}%
% LINE 3 0 3 0 Black White
% 74 4160 640 3410 1390 3610 1310 3420 1500 3600 1440 3450 1590 3620 1540 3480 1680 3660 1620 3520 1760 3710 1690 3570 1830 3760 1760 3630 1890 3830 1810 3690 1950 3900 1860 3760 2000 3980 1900 3840 2040 4060 1940 3920 2080 4160 1960 4000 2120 4260 1980 4090 2150 4370 1990 4190 2170 4500 1980 4300 2180 4640 1960 4410 2190 4820 1900 4540 2180 5390 1450 4690 2150 5350 1610 4870 2090 5390 1330 5170 1550 5370 1230 5200 1400 5330 1150 5190 1290 5290 1070 5150 1210 5240 1000 5110 1130 5190 930 5060 1060 5130 870 4990 1010 5060 820 4930 950 4990 770 4850 910 4910 730 4760 880 4830 690 4670 850 4740 660 4580 820 4640 640 4470 810 4540 620 4350 810 4430 610 4210 830 4300 620 4050 870 4000 680 3430 1250 3750 810 3540 1020
% 
{\color[named]{Black}{%
\special{pn 4}%
\special{pa 4160 640}%
\special{pa 3410 1390}%
\special{fp}%
\special{pa 3610 1310}%
\special{pa 3420 1500}%
\special{fp}%
\special{pa 3600 1440}%
\special{pa 3450 1590}%
\special{fp}%
\special{pa 3620 1540}%
\special{pa 3480 1680}%
\special{fp}%
\special{pa 3660 1620}%
\special{pa 3520 1760}%
\special{fp}%
\special{pa 3710 1690}%
\special{pa 3570 1830}%
\special{fp}%
\special{pa 3760 1760}%
\special{pa 3630 1890}%
\special{fp}%
\special{pa 3830 1810}%
\special{pa 3690 1950}%
\special{fp}%
\special{pa 3900 1860}%
\special{pa 3760 2000}%
\special{fp}%
\special{pa 3980 1900}%
\special{pa 3840 2040}%
\special{fp}%
\special{pa 4060 1940}%
\special{pa 3920 2080}%
\special{fp}%
\special{pa 4160 1960}%
\special{pa 4000 2120}%
\special{fp}%
\special{pa 4260 1980}%
\special{pa 4090 2150}%
\special{fp}%
\special{pa 4370 1990}%
\special{pa 4190 2170}%
\special{fp}%
\special{pa 4500 1980}%
\special{pa 4300 2180}%
\special{fp}%
\special{pa 4640 1960}%
\special{pa 4410 2190}%
\special{fp}%
\special{pa 4820 1900}%
\special{pa 4540 2180}%
\special{fp}%
\special{pa 5390 1450}%
\special{pa 4690 2150}%
\special{fp}%
\special{pa 5350 1610}%
\special{pa 4870 2090}%
\special{fp}%
\special{pa 5390 1330}%
\special{pa 5170 1550}%
\special{fp}%
\special{pa 5370 1230}%
\special{pa 5200 1400}%
\special{fp}%
\special{pa 5330 1150}%
\special{pa 5190 1290}%
\special{fp}%
\special{pa 5290 1070}%
\special{pa 5150 1210}%
\special{fp}%
\special{pa 5240 1000}%
\special{pa 5110 1130}%
\special{fp}%
\special{pa 5190 930}%
\special{pa 5060 1060}%
\special{fp}%
\special{pa 5130 870}%
\special{pa 4990 1010}%
\special{fp}%
\special{pa 5060 820}%
\special{pa 4930 950}%
\special{fp}%
\special{pa 4990 770}%
\special{pa 4850 910}%
\special{fp}%
\special{pa 4910 730}%
\special{pa 4760 880}%
\special{fp}%
\special{pa 4830 690}%
\special{pa 4670 850}%
\special{fp}%
\special{pa 4740 660}%
\special{pa 4580 820}%
\special{fp}%
\special{pa 4640 640}%
\special{pa 4470 810}%
\special{fp}%
\special{pa 4540 620}%
\special{pa 4350 810}%
\special{fp}%
\special{pa 4430 610}%
\special{pa 4210 830}%
\special{fp}%
\special{pa 4300 620}%
\special{pa 4050 870}%
\special{fp}%
\special{pa 4000 680}%
\special{pa 3430 1250}%
\special{fp}%
\special{pa 3750 810}%
\special{pa 3540 1020}%
\special{fp}%
}}%
% STR 2 0 3 0 Black White
% 4 4400 700 4400 800 5 0 1 0
% $A(t_j)$
\put(44.0000,-8.0000){\makebox(0,0){{\colorbox[named]{White}{$A(t_j)$}}}}%
\put(44.0000,-8.0000){\makebox(0,0){{\colorbox[named]{White}{\color[named]{Black}{$A(t_j)$}}}}}%
% STR 2 0 3 0 Black White
% 4 4400 500 4400 600 5 0 1 0
% $A$
\put(44.0000,-6.0000){\makebox(0,0){{\colorbox[named]{White}{$A$}}}}%
\put(44.0000,-6.0000){\makebox(0,0){{\colorbox[named]{White}{\color[named]{Black}{$A$}}}}}%
% STR 2 0 3 0 Black White
% 4 2000 900 2000 1000 1 0 1 0
% $x$
\put(20.0000,-10.0000){\makebox(0,0)[lt]{{\colorbox[named]{White}{$x$}}}}%
\put(20.0000,-10.0000){\makebox(0,0)[lt]{{\colorbox[named]{White}{\color[named]{Black}{$x$}}}}}%
% STR 2 0 3 0 Black White
% 4 4800 900 4800 1000 1 0 1 0
% $x$
\put(48.0000,-10.0000){\makebox(0,0)[lt]{{\colorbox[named]{White}{$x$}}}}%
\put(48.0000,-10.0000){\makebox(0,0)[lt]{{\colorbox[named]{White}{\color[named]{Black}{$x$}}}}}%
% DOT 0 0 3 0 Black White
% 2 2000 1000 2000 1000
% 
{\color[named]{Black}{%
\special{pn 4}%
\special{sh 1}%
\special{ar 2000 1000 16 16 0  6.28318530717959E+0000}%
\special{sh 1}%
\special{ar 2000 1000 16 16 0  6.28318530717959E+0000}%
}}%
% DOT 0 0 3 0 Black White
% 1 4800 1000
% 
{\color[named]{Black}{%
\special{pn 4}%
\special{sh 1}%
\special{ar 4800 1000 16 16 0  6.28318530717959E+0000}%
}}%
% STR 2 0 3 0 Black White
% 4 1600 2300 1600 2400 5 0 0 0
% $x \in A(t_i) \triangle A$
\put(16.0000,-24.0000){\makebox(0,0){$x \in A(t_i) \triangle A$}}%
% STR 2 0 3 0 Black White
% 4 4400 2300 4400 2400 5 0 0 0
% $x \not\in A(t_j) \triangle A$
\put(44.0000,-24.0000){\makebox(0,0){$x \not\in A(t_j) \triangle A$}}%
\end{picture}}%
\vspace{0\baselineskip}
\caption{The shaded parts are the symmetric differences of $A(t)$ and
$A$ at $t_i$ (left-hand) and at $t_j$ (right-hand).}
\label{sec2_1_def}
\end{figure}%

We can sum up the above explanation as follows. The convergence of $A(t)$ to $A$
means that the symmetric difference $A(t) \triangle A$
gets ``smaller" and ``closer" to $\emptyset$.
Hence, if we take an arbitrary point $t_i$, all of the elements of $A(t_i) \triangle A$
do not belong to $A(t) \triangle A$ for all sufficiently large $t$.

\subsection{Examples}
\label{sec2.exa}

This subsection gives examples of the convergence of
set-valued functions at infinity.

First, consider the following four set-valued functions. We set them
defined for $t>1$.
\footnote{
Each of the set-valued functions $A(t)$ through $D(t)$ is undefined
for $t \le 1$. Though we define a set-valued function on $\mathbb{R}$
in Definition~\ref{def2.conver}, there's no problem here.
}
\begin{gather*}
A(t) = \Bigl\{ (x , y ) \in \mathbb{R}^2 \bigm|
x^2 + y^2 < \Bigl(1-\frac{1}{t} \Bigr)^2 \Bigr\}. \\
B(t) = \Bigl\{ (x , y ) \in \mathbb{R}^2 \bigm|
x^2 + y^2 \le \Bigl(1-\frac{1}{t}\Bigr)^2 \Bigr\}. \\
C(t) = \Bigl\{ (x , y ) \in \mathbb{R}^2 \bigm|
x^2 + y^2 \le \Bigl(1+\frac{1}{t}\Bigr)^2 \Bigr\}. \\
D(t) =  \Bigl\{ (x , y ) \in \mathbb{R}^2 \bigm|
x^2 + y^2 < \Bigl(1+\frac{1}{t}\Bigr)^2 \Bigr\}.
\end{gather*}
Second, consider the following two sets.
\begin{gather*}
A = \{ (x , y ) \in \mathbb{R}^2 \bigm|
x^2 + y^2 < 1 \}. \\
B = \{ (x , y ) \in \mathbb{R}^2 \bigm|
x^2 + y^2 \le 1 \}.
\end{gather*}
The set-valued functions $A(t)$ and $D(t)$ are open for all $t > 1$;
$B(t)$ and $C(t)$ closed for all $t > 1$.
The set $A$ is open;
$B$ closed.
Here, we show that $A(t)$ and $B(t)$ converge to $A$ as $t \to \infty$
and that $C(t)$ and $D(t)$ converge to $B$ as  $t \to \infty$.

Figure~\ref{sec2_2_exA} illustrates $A(t)$ and $A$.
Take an arbitrary point $t_i >1$ and choose an arbitrary element
$(x_i,y_i)$ from $A(t_i) \triangle A$.
Then, if we take $t_j >1$ such that $t_j > 1/1-\sqrt{\smash[b]{{x_i}^2+{y_i}^2}}$,
we have $(x_i,y_i) \not\in A(t) \triangle A$ for all $t \ge t_j$.
\footnote{
To show the convergence of $A(t)$ to $A$, we need to take
$t_j >1$ such that $1-1/t_j$, the radius of $A(t_j)$,
is larger than $\sqrt{\smash[b]{{x_i}^2+{y_i}^2}}$, the distance from the origin
to the element $(x_i,y_i)$.
Thus, we derive the inequality $t_j > 1/1-\sqrt{\smash[b]{{x_i}^2+{y_i}^2}}$
from the calculation below:
\begin{gather*}
1-\frac{1}{t_j} >\sqrt{\smash[b]{{x_i}^2+{y_i}^2}} \\
\Leftrightarrow t_j > \frac{1}{1-\sqrt{\smash[b]{{x_i}^2+{y_i}^2}}}.
\end{gather*}
}
Since $A(t)$ and $A$ satisfy Definition~\ref{def2.conver},
$A(t)$ converges to $A$ as $t \to \infty$.
\begin{figure}
\centering
\input{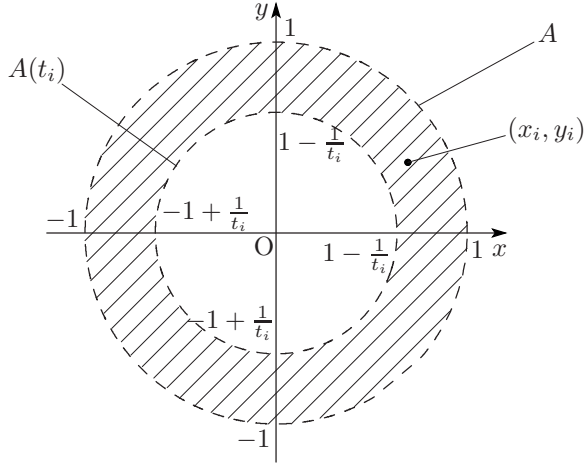}
\caption{The shaded part is $A(t) \triangle A$ at $t_i$,
which does not contain the boundary of A.
The element $(x_i,y_i)$ belongs to $A(t_i) \triangle A$.
}
\label{sec2_2_exA}
\end{figure}%

For the same reason, $B(t)$ converges to $A$ as $t \to \infty$.
It might be counterintuitive that the \textit{closed} set-valued function $B(t)$
converges to the \textit{open} set $A$, not to the \textit{closed} set $B$.

Figure~3
%\ref{sec2_2_exB}
illustrates $B(t)$ and $B$. %%%%%%%%%%%%%%%%%%%%%%%% bug ref
Take an arbitrary point $t_i>1$ and pick an element $(x_i,y_i)$
from $\partial B$, the boundary of B. Here, the element $(x_i,y_i)$
belongs to $B(t_i) \triangle B$ because $\partial B \subset B(t) \triangle B$
for all $t>1$.
Then, we find that $(x_i,y_i) \in B(t) \triangle B$ for all $t \ge t_i$  .
This means that there does not exist $t_j >1$ such that for all $t \ge t_j$,
we have $(x_i,y_i) \not\in B(t) \triangle B$. Since $B(t)$ and $B$ does not
satisfy Definition~\ref{def2.conver}, $B(t)$ does not converge to $B$ as $t \to \infty$.

\begin{figure}
\centering
\input{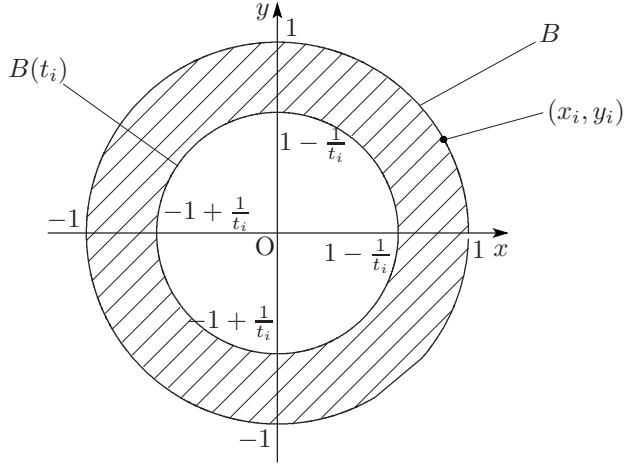}
\label{sec2_2_exB}
\caption{The shaded part is $B(t) \triangle B$ at $t_i$,
which contains the boundary of $B$, $\partial B$.
The element $(x_i,y_i)$ belongs to $\partial B$ and $B(t_i) \triangle B$.
Here, $(x_i,y_i) \in B(t) \triangle B$ for all $t \ge t_i$.
}
\end{figure}%

Note that $A(t_i) \triangle A$ and  $B(t_i) \triangle A$
does not contain the boundary of A, whereas  $A(t_i) \triangle B$ and
$B(t_i) \triangle B$ contain the boundary of B.

Similarly, we can show that $C(t)$ and $D(t)$ converge as $t \to \infty$
to $B$, not to $A$. Here, the \textit{open} set-valued function $D(t)$ converges
to the \textit{closed} set $B$.

Figure~\ref{sec2_2_exC} illustrates $C(t)$ and $B$.
Take an arbitrary point $t_i >1$ and choose an arbitrary element
$(x_i,y_i)$ from $C(t_i) \triangle B$.
Then, if we take $t_j >1$ such that $t_j > 1/\sqrt{\smash[b]{{x_i}^2+{y_i}^2}}-1$,
we have $(x_i,y_i) \not\in C(t) \triangle B$ for all $t \ge t_j$.
\footnote{
To show the convergence of $C(t)$ to $B$, we need to take
$t_j >1$ such that $1+1/t_j$, the radius of $C(t_j)$,
is smaller than $\sqrt{\smash[b]{{x_i}^2+{y_i}^2}}$, the distance from the origin
to the element $(x_i,y_i)$.
Thus, we derive the inequality $t_j > 1/\sqrt{\smash[b]{{x_i}^2+{y_i}^2}}-1$
from the calculation below:
\begin{gather*}
1+\frac{1}{t_j} < \sqrt{\smash[b]{{x_i}^2+{y_i}^2}} \\
\Leftrightarrow t_j > \frac{1}{\sqrt{\smash[b]{{x_i}^2+{y_i}^2}}-1}.
\end{gather*}
}
Since $C(t)$ and $B$ satisfy Definition~\ref{def2.conver},
$C(t)$ converges to $B$ as $t \to \infty$.

\begin{figure}
\centering
\input{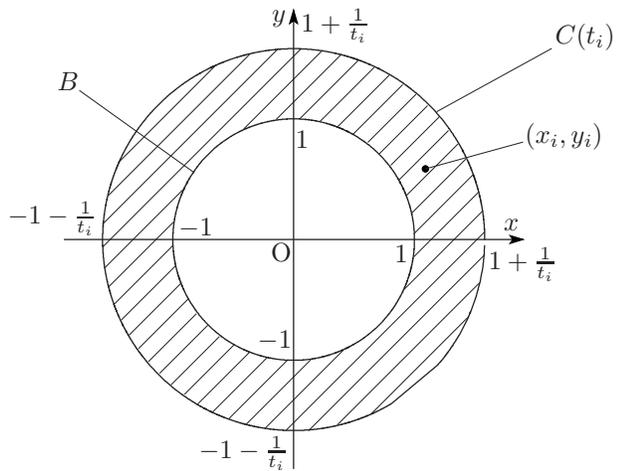}
\caption{The shaded part is $C(t) \triangle B$ at $t_i$,
which does not contain the boundary of B.
The element $(x_i,y_i)$ belongs to $C(t_i) \triangle B$.}
\label{sec2_2_exC}
\end{figure}%

We can show the convergence of $D(t)$ to $B$ in a similar fashion.
However, $D(t)$ does not converge to $A$ as $t \to \infty$.

Figure~\ref{sec2_2_exD} illustrates $D(t)$ and $A$.
Take an arbitrary point $t_i>1$ and pick an element $(x_i,y_i)$
from $\partial A$, the boundary of A. Here, the element $(x_i,y_i)$
belongs to $D(t_i) \triangle A$ because $\partial A \subset D(t) \triangle A$
for all $t>1$.
Then, we find that $(x_i,y_i) \in D(t) \triangle A$ for all $t \ge t_i$  .
This means that there does not exist $t_j >1$ such that for all $t \ge t_j$,
we have $(x_i,y_i) \not\in D(t) \triangle A$. Since $D(t)$ and $A$ does not
satisfy Definition~\ref{def2.conver}, $D(t)$ does not converge to $A$ as $t \to \infty$.
\begin{figure}
\centering
\input{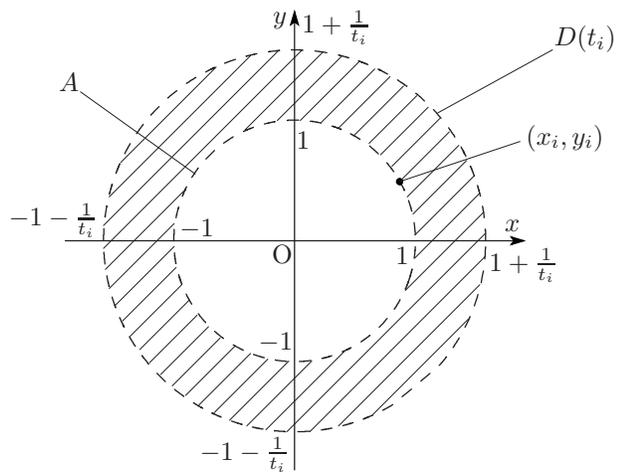}
\caption{The shaded part is $D(t) \triangle A$ at $t_i$,
which contains the boundary of $A$, $\partial A$.
The element $(x_i,y_i)$ belongs to $\partial A$ and $D(t_i) \triangle A$.
Here, $(x_i,y_i) \in D(t) \triangle A$ for all $t \ge t_i$.
}
\label{sec2_2_exD}
\end{figure}%

Note that $C(t_i) \triangle B$ and  $D(t_i) \triangle B$
does not contain the boundary of B, whereas  $C(t_i) \triangle A$ and
$D(t_i) \triangle A$ contain the boundary of A.

We have explained all of the examples.
As a conclusion of this subsection,
we can say that an \textit{open} set-valued function
may converge to a \textit{closed} set as $t \to \infty$ and that a \textit{closed}
set-valued function may converge to an \textit{open} set as $t \to \infty$.
\footnote{
This fact is related to the following properties in a Euclidean space:
the intersection of infinitely-many open sets is not necessarily open;
the union of infinitely-many closed sets is not necessarily closed.
}

%%%%%%%%%%%%%%%%%%%%%%%%%%   subsection %%%%%%%%%%%%%%%%%%%%%%%%%%%%%%%%%%%%

\subsection{Theorems}
\label{sec2.thm}

In this subsection, we state and prove a series of theorems
on Definition~\ref{def2.conver}, the convergence of set-valued functions
at infinity.
For simplicity, set-valued functions are defined on $\mathbb{R}$
throughout this subsection.

Our first theorem follows easily from Definition~\ref{def2.conver}
by using the property of symmetric difference.

\begin{thm} \label{thm2.start}
Let $A(t)$ be a set-valued function and $A(t)^c$ the complement
of $A(t)$ for each $t \in \mathbb{R}$. Let $A$ be a set and $A^c$ the complement
of $A$. Then, $A(t)$ converges to $A$ as $t \to \infty$ if and only if
$A(t)^c$ converges to $A^c$ as $t \to \infty$.
\end{thm}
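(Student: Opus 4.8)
The plan is to reduce both directions of the equivalence to a single set identity, namely the first listed property of symmetric difference, $A \triangle B = A^c \triangle B^c$. The essential observation is that this identity can be applied pointwise in $t$: for every fixed $t \in \mathbb{R}$ one has $A(t) \triangle A = A(t)^c \triangle A^c$ as subsets of the fixed universe $X$. I would record this equality first, noting that it holds simultaneously at $t = t_i$ and at every $t \ge t_j$, since $t$ is merely a parameter and complementation is taken in the same ambient $X$ throughout.

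Granting this, the defining statement of Definition~\ref{def2.conver} for the pair $(A(t), A)$ becomes verbatim the defining statement for the pair $(A(t)^c, A^c)$: the set from which the element $x$ is chosen, $A(t_i) \triangle A$, is the very same set as $A(t_i)^c \triangle A^c$, and the set that $x$ must eventually avoid, $A(t) \triangle A$, is the very same set as $A(t)^c \triangle A^c$. Hence the quantified condition ``for all $t_i$ and for all $x \in A(t_i)\triangle A$ there exists $t_j$ such that for all $t \ge t_j$ we have $x \notin A(t) \triangle A$'' and its complemented counterpart are one and the same proposition.

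Since the two conditions coincide as statements, each implication of the equivalence is immediate. Concretely, I would argue the forward direction by assuming $\lim_{t\to\infty} A(t) = A$, fixing an arbitrary $t_i$ and an arbitrary $x \in A(t_i)^c \triangle A^c$, rewriting this membership as $x \in A(t_i) \triangle A$ via the pointwise identity, invoking the hypothesis to obtain $t_j$ with $x \notin A(t) \triangle A$ for all $t \ge t_j$, and rewriting once more to conclude $x \notin A(t)^c \triangle A^c$; this yields $\lim_{t\to\infty} A(t)^c = A^c$. The reverse direction is identical with the roles of $A(t)$ and $A(t)^c$ exchanged, using $(A(t)^c)^c = A(t)$ and $(A^c)^c = A$.

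I do not expect any genuine obstacle here; the only point requiring a word of care is the justification that the symmetric-difference identity may be applied uniformly over all the values of $t$ occurring in the definition, which follows simply because the identity is a purely set-theoretic fact valid for each individual $t$.
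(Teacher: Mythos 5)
Your proposal is correct and follows essentially the same route as the paper's proof: both directions rest on applying the identity $A \triangle B = A^c \triangle B^c$ pointwise in $t$, so that the defining condition for $(A(t), A)$ and for $(A(t)^c, A^c)$ coincide as statements. The paper's own argument is exactly this, stated slightly more tersely.
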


\begin{proof}
Suppose that $A(t)$ converges to $A$ as $t \to \infty$.
By Definition~\ref{def2.conver},
for all $t_i \in \mathbb{R}$ and for all $x \in A(t_i) \triangle A$,
there exists $t_j \in \mathbb{R}$ such that for all $t \ge t_j$
we have  $x \not\in A(t) \triangle A$.
Since $A \triangle B = A^c \triangle B^c$ with the sets $A$ and $B$,
for all $t_i \in \mathbb{R}$ and for all $ x \in A(t_i)^c \triangle A^c$,
there exists $t_j \in \mathbb{R}$ such that for all $t \ge t_j$
we have  $x \not\in A(t)^c \triangle A^c$.
It follows from Definition~\ref{def2.conver}
that $A(t)^c$ converges to $A^c$ as $t \to \infty$.

We can prove the converse similarly.
\end{proof}

We need the next theorem to derive further results.
\begin{thm} \label{thm2.2}%%%%%%%%%%%%%%%%%%%%%% theorem
Let $A(t)$ be a set-valued function that converges to limit $A$ as $t \to\infty$.
Then, for all $t_i \in \mathbb{R}$ and for all $x \in \{ A(t_i) \triangle A \}^c$,
there exists $t_j \in \mathbb{R}$ such that for all $t \ge t_j$
we have  $x \in \{ A(t) \triangle A \}^c$.
\end{thm}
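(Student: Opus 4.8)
The plan is to argue by contradiction, and the decisive observation is that the convergence hypothesis controls only those elements that currently lie \emph{inside} a symmetric difference, whereas the conclusion concerns elements lying in its complement. Write $S(t)=A(t)\triangle A$ for brevity, so that Definition~\ref{def2.conver} reads: for every $t_i$ and every $x\in S(t_i)$ there is a $t_j$ with $x\notin S(t)$ for all $t\ge t_j$. The goal is the complementary assertion that for every $t_i$ and every $x\in S(t_i)^{c}$ there is a $t_j$ with $x\in S(t)^{c}$ for all $t\ge t_j$. The difficulty is that the hypothesis cannot be applied directly at the index $t_i$, since $x\in S(t_i)^{c}$ places $x$ outside the set that the definition knows how to handle.

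First I would fix $t_i\in\mathbb{R}$ and $x\in\{A(t_i)\triangle A\}^{c}$ and suppose, for contradiction, that no suitable $t_j$ exists. Negating the conclusion yields: for every $t_j$ there is some $t\ge t_j$ with $x\in A(t)\triangle A$; that is, $x$ lies in the symmetric difference $A(t)\triangle A$ for arbitrarily large $t$. The key maneuver is then a re-indexing of the hypothesis: I would choose one such large $t^{\ast}$ with $x\in A(t^{\ast})\triangle A$ and invoke Definition~\ref{def2.conver} \emph{with $t^{\ast}$ in the role of the starting index}, which is legitimate because the definition quantifies over \emph{all} starting indices. This produces a $t_j\in\mathbb{R}$ with $x\notin A(t)\triangle A$ for all $t\ge t_j$.

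This is the desired contradiction: the standing assumption guarantees some $t\ge t_j$ with $x\in A(t)\triangle A$, directly conflicting with the conclusion just obtained. Hence the assumed $t_j$ must after all exist, completing the proof. I expect the only real obstacle to be conceptual---recognizing that the universal quantifier over starting indices is precisely what permits applying the hypothesis at the late re-entry point $t^{\ast}$ rather than at the given $t_i$. Once this is seen, the argument uses no structural property of symmetric difference whatsoever and would hold verbatim for any family of sets enjoying the convergence property, so the fixedness of the limit $A$ is not actually exploited.
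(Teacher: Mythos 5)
Your proof is correct and rests on the same key idea as the paper's own proof: re-applying Definition~\ref{def2.conver} with the starting index moved to a time $t^{\ast}$ at which $x$ lies inside $A(t^{\ast}) \triangle A$, which is legitimate precisely because the definition quantifies over all starting indices. The paper packages this as a direct two-case analysis (either $x$ belongs to $A(t_k) \triangle A$ for some $t_k$, in which case the definition applies there, or $x$ never enters the symmetric difference, in which case the conclusion is trivial), whereas you reach the same point by contradiction; the mathematical content is identical.
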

\begin{proof}
Take an  arbitrary point $t_i \in \mathbb{R}$ and choose an
arbitrary element $x$ from $\{ A(t_i) \triangle A \}^c$.
Then, we can consider two possibilities for the element $x \in
\{ A(t_i) \triangle A \}^c$:
first, for some $t_k \in \mathbb{R}$, we have $x \in A(t_k) \triangle A$;
second, for all $t \in \mathbb{R}$, we have $x \in \{ A(t) \triangle A \}^c$.

To begin with, consider the first case where
$x \in A(t_k) \triangle A$ for some $t_k \in \mathbb{R}$.
The convergence of $A(t)$ to $A$ as $t \to \infty$ means that
for all $t_i \in \mathbb{R}$ and for all $x \in A(t_i) \triangle A$,
there exists $t_j \in \mathbb{R}$ such that
for all $t \ge t_j$ we have $x \in \{ A(t) \triangle A \}^c$.
Thus, it naturally holds that
for  the point $t_k \in \mathbb{R}$ and for the element $x \in A(t_k) \triangle A$,
there exists $t_j \in \mathbb{R}$ such that
for all $t \ge t_j$ we have $x \in \{ A(t) \triangle A \}^c$.

Next consider the second case where
for all $t \in \mathbb{R}$, we have $x \in \{ A(t) \triangle A \}^c$.
Then, this implies that
there exists $t_j \in \mathbb{R}$ such that
for all $t \ge t_j$ we have $x \in \{ A(t) \triangle A \}^c$.

In either case, there exists $t_j \in \mathbb{R}$ such that
the element $x \in \{ A(t_i) \triangle A \}^c$ belongs to
$x \in \{ A(t) \triangle A \}^c$ for all $t \ge t_j$.
Since we take $t_i$ and $x \in \{ A(t_i) \triangle A \}^c$
arbitrarily, it holds that
for all $t_i \in \mathbb{R}$ and for all $x \in \{ A(t_i) \triangle A \}^c$,
there exists $t_j \in \mathbb{R}$ such that for all $t \ge t_j$
we have  $x \in \{ A(t) \triangle A \}^c$, which completes
the proof of Theorem~\ref{thm2.2}.
\end{proof}
\paragraph{Remark}
In general, the converse of Theorem~\ref{thm2.2} is not true.
Let $A(t)$ be a set-valued function such that
$A(t_1) = A(t_2)$ for all $t_1, t_2$ with $t_1 \neq t_2$ and
let $A$ be a set with $A(t) \neq A$ for all $t$.
Figure~\ref{thm2_2rem} illustrates $A(t)$ and $A$
as a Venn diagram.

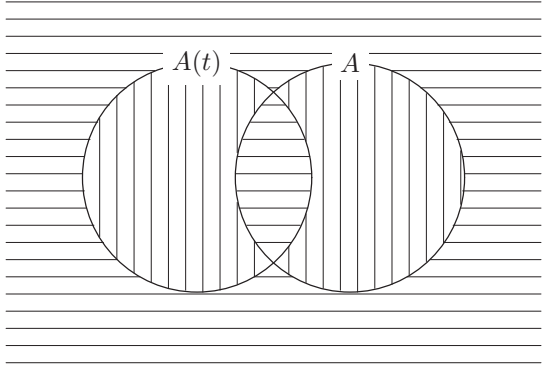
\begin{figure} %%%%%%%%%%%%%%%%%%%%%%%%%%%%%%%% Figure thm2.2rem
\centering
%WinTpicVersion4.28b
{\unitlength 0.1in
\begin{picture}( 57.0000, 18.9000)( 14.0000,-29.7000)
% STR 2 0 3 0 Black White
% 4 7100 1870 7100 1970 2 0 0 0
% 
\put(71.0000,-19.7000){\makebox(0,0)[lb]{}}%
% STR 2 0 3 0 Black White
% 4 2200 1700 2200 1800 2 0 0 0
% 
\put(22.0000,-18.0000){\makebox(0,0)[lb]{}}%
% CIRCLE 2 0 3 0 Black White
% 4 2400 2000 2400 1400 2400 1400 2400 1400
% 
{\color[named]{Black}{%
\special{pn 8}%
\special{ar 2400 2000 600 600  0.0000000  6.2831853}%
}}%
% CIRCLE 2 0 3 0 Black White
% 4 3200 2000 2600 2000 2600 2000 2600 2000
% 
{\color[named]{Black}{%
\special{pn 8}%
\special{ar 3200 2000 600 600  0.0000000  6.2831853}%
}}%
% LINE 3 0 3 0 Black White
% 24 2430 1400 2430 2600 2520 1410 2520 2590 2610 2130 2610 2560 2700 2330 2700 2520 2610 1440 2610 1870 2700 1480 2700 1670 2340 1410 2340 2590 2250 1420 2250 2580 2160 1450 2160 2550 2070 1500 2070 2500 1980 1570 1980 2430 1890 1690 1890 2310
% 
{\color[named]{Black}{%
\special{pn 4}%
\special{pa 2430 1400}%
\special{pa 2430 2600}%
\special{fp}%
\special{pa 2520 1410}%
\special{pa 2520 2590}%
\special{fp}%
\special{pa 2610 2130}%
\special{pa 2610 2560}%
\special{fp}%
\special{pa 2700 2330}%
\special{pa 2700 2520}%
\special{fp}%
\special{pa 2610 1440}%
\special{pa 2610 1870}%
\special{fp}%
\special{pa 2700 1480}%
\special{pa 2700 1670}%
\special{fp}%
\special{pa 2340 1410}%
\special{pa 2340 2590}%
\special{fp}%
\special{pa 2250 1420}%
\special{pa 2250 2580}%
\special{fp}%
\special{pa 2160 1450}%
\special{pa 2160 2550}%
\special{fp}%
\special{pa 2070 1500}%
\special{pa 2070 2500}%
\special{fp}%
\special{pa 1980 1570}%
\special{pa 1980 2430}%
\special{fp}%
\special{pa 1890 1690}%
\special{pa 1890 2310}%
\special{fp}%
}}%
% LINE 3 0 3 0 Black White
% 26 3420 1440 3420 2560 3330 1420 3330 2580 3240 1400 3240 2600 3150 1400 3150 2590 3060 1420 3060 2580 2970 2190 2970 2550 2880 2360 2880 2510 2970 1450 2970 1810 2880 1490 2880 1640 3510 1490 3510 2510 3600 1560 3600 2440 3690 1660 3690 2340 3780 1860 3780 2140
% 
{\color[named]{Black}{%
\special{pn 4}%
\special{pa 3420 1440}%
\special{pa 3420 2560}%
\special{fp}%
\special{pa 3330 1420}%
\special{pa 3330 2580}%
\special{fp}%
\special{pa 3240 1400}%
\special{pa 3240 2600}%
\special{fp}%
\special{pa 3150 1400}%
\special{pa 3150 2590}%
\special{fp}%
\special{pa 3060 1420}%
\special{pa 3060 2580}%
\special{fp}%
\special{pa 2970 2190}%
\special{pa 2970 2550}%
\special{fp}%
\special{pa 2880 2360}%
\special{pa 2880 2510}%
\special{fp}%
\special{pa 2970 1450}%
\special{pa 2970 1810}%
\special{fp}%
\special{pa 2880 1490}%
\special{pa 2880 1640}%
\special{fp}%
\special{pa 3510 1490}%
\special{pa 3510 2510}%
\special{fp}%
\special{pa 3600 1560}%
\special{pa 3600 2440}%
\special{fp}%
\special{pa 3690 1660}%
\special{pa 3690 2340}%
\special{fp}%
\special{pa 3780 1860}%
\special{pa 3780 2140}%
\special{fp}%
}}%
% LINE 3 0 3 0 Black White
% 18 2990 2070 2610 2070 3000 1980 2600 1980 2990 1890 2610 1890 2960 1800 2640 1800 2920 1710 2680 1710 2860 1620 2740 1620 2980 2160 2620 2160 2940 2250 2660 2250 2890 2340 2710 2340
% 
{\color[named]{Black}{%
\special{pn 4}%
\special{pa 2990 2070}%
\special{pa 2610 2070}%
\special{fp}%
\special{pa 3000 1980}%
\special{pa 2600 1980}%
\special{fp}%
\special{pa 2990 1890}%
\special{pa 2610 1890}%
\special{fp}%
\special{pa 2960 1800}%
\special{pa 2640 1800}%
\special{fp}%
\special{pa 2920 1710}%
\special{pa 2680 1710}%
\special{fp}%
\special{pa 2860 1620}%
\special{pa 2740 1620}%
\special{fp}%
\special{pa 2980 2160}%
\special{pa 2620 2160}%
\special{fp}%
\special{pa 2940 2250}%
\special{pa 2660 2250}%
\special{fp}%
\special{pa 2890 2340}%
\special{pa 2710 2340}%
\special{fp}%
}}%
% LINE 3 0 3 0 Black White
% 76 4200 1350 1400 1350 2180 1440 1400 1440 2030 1530 1400 1530 1940 1620 1400 1620 1880 1710 1400 1710 1840 1800 1400 1800 1810 1890 1400 1890 1800 1980 1400 1980 1810 2070 1400 2070 1820 2160 1400 2160 1860 2250 1400 2250 1910 2340 1400 2340 1980 2430 1400 2430 2100 2520 1400 2520 4200 2610 1400 2610 4200 2700 1400 2700 4200 2790 1400 2790 4200 2880 1400 2880 4200 2970 1400 2970 4200 2520 3500 2520 4200 2430 3620 2430 4200 2340 3690 2340 4200 2250 3740 2250 4200 2160 3780 2160 4200 2070 3790 2070 4200 1980 3800 1980 4200 1890 3790 1890 4200 1800 3760 1800 4200 1710 3720 1710 4200 1620 3660 1620 4200 1530 3570 1530 4200 1440 3420 1440 2900 2520 2700 2520 4200 1260 1400 1260 4200 1170 1400 1170 4200 1080 1400 1080 2980 1440 2620 1440 2830 1530 2770 1530
% 
{\color[named]{Black}{%
\special{pn 4}%
\special{pa 4200 1350}%
\special{pa 1400 1350}%
\special{fp}%
\special{pa 2180 1440}%
\special{pa 1400 1440}%
\special{fp}%
\special{pa 2030 1530}%
\special{pa 1400 1530}%
\special{fp}%
\special{pa 1940 1620}%
\special{pa 1400 1620}%
\special{fp}%
\special{pa 1880 1710}%
\special{pa 1400 1710}%
\special{fp}%
\special{pa 1840 1800}%
\special{pa 1400 1800}%
\special{fp}%
\special{pa 1810 1890}%
\special{pa 1400 1890}%
\special{fp}%
\special{pa 1800 1980}%
\special{pa 1400 1980}%
\special{fp}%
\special{pa 1810 2070}%
\special{pa 1400 2070}%
\special{fp}%
\special{pa 1820 2160}%
\special{pa 1400 2160}%
\special{fp}%
\special{pa 1860 2250}%
\special{pa 1400 2250}%
\special{fp}%
\special{pa 1910 2340}%
\special{pa 1400 2340}%
\special{fp}%
\special{pa 1980 2430}%
\special{pa 1400 2430}%
\special{fp}%
\special{pa 2100 2520}%
\special{pa 1400 2520}%
\special{fp}%
\special{pa 4200 2610}%
\special{pa 1400 2610}%
\special{fp}%
\special{pa 4200 2700}%
\special{pa 1400 2700}%
\special{fp}%
\special{pa 4200 2790}%
\special{pa 1400 2790}%
\special{fp}%
\special{pa 4200 2880}%
\special{pa 1400 2880}%
\special{fp}%
\special{pa 4200 2970}%
\special{pa 1400 2970}%
\special{fp}%
\special{pa 4200 2520}%
\special{pa 3500 2520}%
\special{fp}%
\special{pa 4200 2430}%
\special{pa 3620 2430}%
\special{fp}%
\special{pa 4200 2340}%
\special{pa 3690 2340}%
\special{fp}%
\special{pa 4200 2250}%
\special{pa 3740 2250}%
\special{fp}%
\special{pa 4200 2160}%
\special{pa 3780 2160}%
\special{fp}%
\special{pa 4200 2070}%
\special{pa 3790 2070}%
\special{fp}%
\special{pa 4200 1980}%
\special{pa 3800 1980}%
\special{fp}%
\special{pa 4200 1890}%
\special{pa 3790 1890}%
\special{fp}%
\special{pa 4200 1800}%
\special{pa 3760 1800}%
\special{fp}%
\special{pa 4200 1710}%
\special{pa 3720 1710}%
\special{fp}%
\special{pa 4200 1620}%
\special{pa 3660 1620}%
\special{fp}%
\special{pa 4200 1530}%
\special{pa 3570 1530}%
\special{fp}%
\special{pa 4200 1440}%
\special{pa 3420 1440}%
\special{fp}%
\special{pa 2900 2520}%
\special{pa 2700 2520}%
\special{fp}%
\special{pa 4200 1260}%
\special{pa 1400 1260}%
\special{fp}%
\special{pa 4200 1170}%
\special{pa 1400 1170}%
\special{fp}%
\special{pa 4200 1080}%
\special{pa 1400 1080}%
\special{fp}%
\special{pa 2980 1440}%
\special{pa 2620 1440}%
\special{fp}%
\special{pa 2830 1530}%
\special{pa 2770 1530}%
\special{fp}%
}}%
% STR 2 0 3 0 Black White
% 4 2400 1300 2400 1400 5 0 1 0
% $A(t)$
\put(24.0000,-14.0000){\makebox(0,0){{\colorbox[named]{White}{$A(t)$}}}}%
\put(24.0000,-14.0000){\makebox(0,0){{\colorbox[named]{White}{\color[named]{Black}{$A(t)$}}}}}%
% STR 2 0 3 0 Black White
% 4 3200 1300 3200 1400 5 0 1 0
% $A$
\put(32.0000,-14.0000){\makebox(0,0){{\colorbox[named]{White}{$A$}}}}%
\put(32.0000,-14.0000){\makebox(0,0){{\colorbox[named]{White}{\color[named]{Black}{$A$}}}}}%
\end{picture}}%
\caption{
The vertically lined part is $A(t) \triangle A$.
The horizontally lined part is $\{ A(t) \triangle A \}^c$.
Both of $A(t) \triangle A$ and $\{ A(t) \triangle A \}^c$
hold stationary for all $t$ because $A(t)$ is the same for all $t$.
}
\label{thm2_2rem}
\end{figure}%

Since $A(t)$ is the same for all $t$, so is $A(t) \triangle A$.
Thus, if we take an arbitrary point $t_i$, all elements of $A(t_i) \triangle A$
belong to $A(t) \triangle A$ for all $t \ge t_i$.
Hence, it does not hold that
for all $t_i \in \mathbb{R}$ and for all $x \in A(t_i) \triangle A$
there exists $t_j \in \mathbb{R}$ such that for all $t \ge t_j$
we have $x \in \{ A(t) \triangle A \}^c$.
Thus, we see that $A(t)$ does not converge to $A$ as $t \to \infty$.
On the other hand, if we take an arbitrary point $t_i$,
all elements of $\{ A(t_i) \triangle A \}^c$ belong to $\{ A(t) \triangle A \}^c$
for all $t \ge t_i$. Hence, it holds that
for all $t_i \in \mathbb{R}$ and for all $x \in \{ A(t_i) \triangle A \}^c$
there exists $t_j \in \mathbb{R}$ such that for all $t \ge t_j$
we have $x \in \{ A(t) \triangle A \}^c$.

\vspace*{1\baselineskip}

The next theorem gives a necessary and sufficient condition for
a set-valued function to converge at infinity.

\begin{thm} \label{thm2.8} %%%%%%%%%%%%%%%%%%%%%%%%%%%%%%%%%%% Theorem
Let $A(t)$ be a set-valued function and $A$ a set. Then,
$A(t)$ converges to $A$ as $t \to \infty$ if and only if
for all $x \in X$, there exists $t_0 \in \mathbb{R}$ such that
for all $t \ge t_0$ we have $x \in \{ A(t) \triangle A \}^c$.
\end{thm}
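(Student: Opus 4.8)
The plan is to prove the two implications separately, exploiting the fact that for any fixed reference point $t_i$ the sets $A(t_i) \triangle A$ and $\{ A(t_i) \triangle A \}^c$ partition the universe $X$, so that every $x \in X$ is handled by one of two complementary tools.

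For the ``if'' direction I would assume the stated condition and simply specialize it. Take an arbitrary $t_i \in \mathbb{R}$ and an arbitrary $x \in A(t_i) \triangle A$; since $x \in X$, the hypothesis furnishes a $t_0 \in \mathbb{R}$ with $x \in \{ A(t) \triangle A \}^c$, i.e.\ $x \notin A(t) \triangle A$, for all $t \ge t_0$. Setting $t_j = t_0$ reproduces exactly the clause of Definition~\ref{def2.conver}, so $A(t)$ converges to $A$ as $t \to \infty$. This direction needs nothing beyond restricting the universal quantifier over $X$ to the subset $A(t_i) \triangle A$.

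For the ``only if'' direction I would fix an arbitrary $x \in X$ together with a single reference point $t_i \in \mathbb{R}$, and split into two cases according to which half of the partition $x$ lies in. If $x \in A(t_i) \triangle A$, then Definition~\ref{def2.conver}, applied to this $t_i$ and $x$, directly produces a $t_j$ beyond which $x \notin A(t) \triangle A$. If instead $x \in \{ A(t_i) \triangle A \}^c$, then Theorem~\ref{thm2.2}---whose hypothesis is precisely the convergence we are now assuming---produces a $t_j$ beyond which $x \in \{ A(t) \triangle A \}^c$. In either case we obtain a threshold $t_0 = t_j$ with $x \in \{ A(t) \triangle A \}^c$ for all $t \ge t_0$, and since $x \in X$ was arbitrary the desired condition follows.

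The argument is essentially bookkeeping, so I do not anticipate a genuine obstacle; the one point that demands care is the case split in the forward direction, where it is essential that Theorem~\ref{thm2.2} supplies exactly the complementary case that Definition~\ref{def2.conver} alone does not cover. Fixing $t_i$ once and for all, rather than letting it vary with $x$, is what keeps the two cases both exhaustive and mutually exclusive, and lets me conclude with a single threshold $t_0$ for each $x$.
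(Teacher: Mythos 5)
Your proposal is correct and follows essentially the same route as the paper: the forward direction rests on the partition $X = \{A(t_i)\triangle A\} \cup \{A(t_i)\triangle A\}^c$, handling the first piece via Definition~\ref{def2.conver} and the second via Theorem~\ref{thm2.2}, while the converse is just restriction of the quantifier over $X$ to the subset $A(t_i)\triangle A$. Your per-element case split is, if anything, a slightly cleaner bookkeeping of the thresholds than the paper's $\max\{t_{j_1},t_{j_2}\}$ formulation, but the underlying argument is identical.
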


Before we prove the theorem, it is helpful to clarify the following three facts.
We shall use them in proofs from here on.
\begin{description}
\item[First:] Let $A'$ be a subset of a set $A$ and
let $B'$ be a subset of a set $B$.
If every element of $A$ belongs to $B$, then every element
of $A'$ belongs to $B$.
\footnote{
If `` $x \in A \Rightarrow x \in B$," then
`` $x \in A' \Rightarrow x \in B$."
}
On the other hand,
if every element of $A$ belongs to $B'$, then every element
of $A$ belongs to $B$.
\footnote{
If `` $x \in A \Rightarrow x \in B'$," then
`` $x \in A \Rightarrow x \in B$."
}
\item[Second:] Let $A$, $B$, $C$, and $D$ be sets.
If every element of both $A$ and $B$ belongs to $C$,
then every element of $A \cup B$ belongs to $C$.
\footnote{
If `` $x \in A \Rightarrow  x \in C$ " and `` $x \in B \Rightarrow  x \in C$,"
then `` $x \in A \cup B \Rightarrow x \in C$."
}
On the other hand, if every element of $A$ belongs to both $C$ and $D$,
then every element of $A$ belongs to $C \cap D$.
\footnote{
If `` $x \in A \Rightarrow  x \in C$ " and
`` $x \in A \Rightarrow x \in D$,"
then `` $x \in A \Rightarrow x \in C \cap D$."
}
\item[Third:] Let $A$, $B$, and $C$ be sets. The distributive laws
of set theory state that
\begin{gather*}
(A \cup B) \cap C = (A \cap C) \cup (B \cap C). \\
(A \cap B) \cup C = (A \cup C) \cap (B \cup C).
\end{gather*}
\end{description}

Having explained these properties, we then move to the proof of Theorem~\ref{thm2.8}.

\begin{proof}
Suppose that $A(t)$ converges to $A$ as $t \to \infty$.
By Definition~\ref{def2.conver},
for all $t_{i_1} \in \mathbb{R}$ and for all $x \in A(t_{i_1}) \triangle A$
there exists $t_{j_1} \in \mathbb{R}$ such that for all $t \ge t_{j_1}$
we have  $x \in \{ A(t) \triangle A \}^c$.
On the other hand, by Theorem~\ref{thm2.2},
for all $t_{i_2} \in \mathbb{R}$ and for all $x \in \{ A(t_{i_2}) \triangle A \}^c$
there exists $t_{j_2} \in \mathbb{R}$ such that for all $t \ge t_{j_2}$
we have  $x \in \{ A(t) \triangle A \}^c$.
Take the same point $t_i$ for $t_{i_1}$, $t_{i_2}$ and let
$t_0 = \max \{ t_{j_1}, t_{j_2} \}$. Then, every element of both
$A(t_i) \triangle A$ and  $\{ A(t_i) \triangle A \}^c$ belongs to $\{ A(t) \triangle A \}^c$
for all $t \ge t_0$. Thus,
for all $t_i \in \mathbb{R}$ and
for all $x \in \{ A(t_i) \triangle A \} \cup \{ A(t_i) \triangle A \}^c$
there exists $t_0 \in \mathbb{R}$ such that for all $t \ge t_0$
we have  $x \in \{ A(t) \triangle A \}^c$.
Since $\{ A(t_i) \triangle A \} \cup \{ A(t_i) \triangle A \}^c = X$,
for all $t_i \in \mathbb{R}$ and for all $x \in X$
there exists $t_0 \in \mathbb{R}$ such that for all $t \ge t_0$
we have  $x \in \{ A(t) \triangle A \}^c$.
We can leave out the part ``for all $t_i \in \mathbb{R}$"
because the universe X is independent of the choice of $t_i$.
Thus, it follows that for all $x \in X$
there exists $t_0 \in \mathbb{R}$ such that for all $t \ge t_0$
we have  $x \in \{ A(t) \triangle A \}^c$.

Next suppose that for all $x \in X$
there exists $t_j \in \mathbb{R}$ such that for all $t \ge t_j$
we have  $x \in \{ A(t) \triangle A \}^c$.
%Note that $A(t_i) \triangle A \subset X$ for all $t_i \in \mathbb{R}$.
Since $A(t_i) \triangle A$ is a subset of X for all $t_i \in \mathbb{R}$,
we see that
every element of $A(t_i) \triangle A$ at any $t_i \in \mathbb{R}$
belongs to $\{ A(t) \triangle A\}^c$ for all $t \ge t_j$.
Thus,
for all $t_i \in \mathbb{R}$ and for all $x \in A(t_i) \triangle A$
there exists $t_j \in \mathbb{R}$ such that for all $t \ge t_j$
we have  $x \in \{ A(t) \triangle A \}^c$.
By Definition~\ref{def2.conver}, $A(t)$ converges to $A$ as $t \to \infty$.

Therefore, we have completed the proof of Theorem~\ref{thm2.8}.
\end{proof}

\paragraph{Remark}
Let $A(t)$ be a set-valued function that converges to limit $A$ as $t \to \infty$.
As was discussed in Subsection~\ref{sec2.def},
the convergence of $A(t)$ to $A$ intuitively means that
$A(t) \triangle A$ gets ``smaller" and ``closer" to $\emptyset$ as $t$
gets larger.
Conversely, $\{ A(t) \triangle A\}^c$ gets ``larger" and ``closer"
to the universe $X$ as $t$ gets larger.
So we could say that every element of $X$ belongs to
$\{ A(t) \triangle A\}^c$ for all sufficiently large $t$.
For this reason, we could expect that Definition~\ref{def2.conver}
is equivalent to Theorem~\ref{thm2.8}.

\vspace*{1\baselineskip}
Using the preceding theorem,
we shall prove the next theorem.
\begin{thm} \label{thm2.3}%%%%%%%%%%%%%%%%%%%%%%%%%%%%%% Theorem 2.3
Let $A(t)$ be a set-valued function that converges to limit $A$ as $t \to\infty$.
Then, for all $x \in A$, there exists $t_0 \in \mathbb{R}$
such that for all $t \ge t_0$ we have $x \in A(t)$.
\end{thm}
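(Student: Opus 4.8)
The plan is to derive this directly from Theorem~\ref{thm2.8}, which recasts convergence as a pointwise statement about the complement of the symmetric difference. First I would fix an arbitrary $x \in A$. Since $A$ is a subset of the universe $X$, we have $x \in X$, so Theorem~\ref{thm2.8} applies and furnishes a point $t_0 \in \mathbb{R}$ such that $x \in \{ A(t) \triangle A \}^c$ for all $t \ge t_0$, that is, $x \not\in A(t) \triangle A$ for all $t \ge t_0$.

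The remaining task is to convert the non-membership $x \not\in A(t) \triangle A$, together with the hypothesis $x \in A$, into the desired conclusion $x \in A(t)$. For this I would invoke property $(2)$ of the symmetric difference, $A(t) \triangle A = \{ A(t) \cap A^c \} \cup \{ A(t)^c \cap A \}$, and argue by contradiction: if $x$ failed to belong to $A(t)$, then $x \in A(t)^c$, so $x \in A(t)^c \cap A$, whence $x \in A(t) \triangle A$, contradicting $x \not\in A(t) \triangle A$. Hence $x \in A(t)$ for every $t \ge t_0$.

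Since $x$ was an arbitrary element of $A$ and the threshold $t_0$ depends only on $x$, this shows that for all $x \in A$ there exists $t_0 \in \mathbb{R}$ with $x \in A(t)$ for all $t \ge t_0$, which is exactly the statement of Theorem~\ref{thm2.3}.

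The only genuine step is the elementary set-theoretic implication that $x \in A$ together with $x \not\in A(t) \triangle A$ forces $x \in A(t)$; everything else is an immediate application of Theorem~\ref{thm2.8}. I therefore expect no real obstacle. The substantive point---that eventual membership in $\{ A(t) \triangle A \}^c$ for \emph{every} element of $X$ forces eventual membership in $A(t)$ for those elements already lying in $A$---is precisely the content that Theorem~\ref{thm2.8} was set up to deliver, so the argument is short and self-contained.
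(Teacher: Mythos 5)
Your proof is correct and follows essentially the same route as the paper: both apply Theorem~\ref{thm2.8} to obtain that $x \not\in A(t) \triangle A$ eventually, and then use an elementary set-theoretic fact to conclude $x \in A(t)$. The only cosmetic difference is that the paper verifies the inclusion $\{ A(t) \triangle A \}^c \cap A \subset A(t)$ by direct computation, whereas you argue the contrapositive via $A(t)^c \cap A \subset A(t) \triangle A$; these are the same fact.
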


\begin{proof}
Since $A(t)$ converges to $A$ as $t \to \infty$, by Theorem~\ref{thm2.8},
for all $x \in X$ there exists $t_0 \in \mathbb{R}$ such that
for all $t \ge t_0$ we have $x \in \{ A(t) \triangle A \}^c$.
Since $A$ is a subset of $X$, we see that every element of
$A$ belongs to $\{ A(t) \triangle A \}^c$ for all $t \ge t_0$.
Thus,
for all $x \in A$ there exists $t_0 \in \mathbb{R}$ such that
for all $t \ge t_0$ we have $x \in \{ A(t) \triangle A \}^c$.
Then, this substantially implies that
for all $x \in A$ there exists $t_0 \in \mathbb{R}$ such that
for all $t \ge t_0$ we have $x \in \{ A(t) \triangle A \}^c \cap A$,
because we pick the element $x$ from $A$.
Note that $\bigl[ \{ A(t) \triangle A \}^c \cap A \bigr]
\subset A(t)$, because
\begin{align*}
\{ A(t) \triangle A \}^c \cap A
&= \bigl[ \{ A(t) \cap A \} \cup \{ A(t)^c \cap A^c \} \bigr] \cap A \\
&= \bigl[ \{ A(t) \cap A \} \cap A \bigr] 
\cup \bigl[ \{ A(t)^c \cap A^c \} \cap A \bigr] \\
&= A(t) \cap A \\
&\subset A(t).
\end{align*}
Since  $\{ A(t) \triangle A \}^c \cap A$ is a subset of $A(t)$ for all $t \in \mathbb{R}$,
we see that every element of $A$ belongs to $A(t)$
for all $t \ge t_0$.
Hence, for all $x \in A$ there exists $t_0 \in \mathbb{R}$
such that for all $t \ge t_0$ we have $x \in A(t)$.
We have thus proved the theorem.
\end{proof}

\paragraph{Remark}
We can change the order of the proofs of
Theorem~\ref{thm2.8} and~\ref{thm2.3}.
In Appendix, we first prove Theorem~\ref{thm2.3},
and then prove Theorem~\ref{thm2.8}.
\vspace*{1\baselineskip}

We now obtain a corollary to Theorem~\ref{thm2.3}.
\begin{cor} \label{cor2.4}
Let $A(t)$ be a set-valued function that converges to limit $A$ as $t \to\infty$.
Then, for all $x \in A^c$, there exists $t_0 \in \mathbb{R}$
such that for all $t \ge t_0$ we have $x \in A(t)^c$.
\end{cor}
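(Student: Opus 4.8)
The plan is to deduce the corollary directly from Theorem~\ref{thm2.3} by passing to complements. The key observation is that Theorem~\ref{thm2.start} already tells us that $A(t)$ converges to $A$ if and only if $A(t)^c$ converges to $A^c$. Since we are given that $A(t)$ converges to $A$ as $t \to \infty$, this immediately supplies the convergence $A(t)^c \to A^c$ as $t \to \infty$, which is exactly the hypothesis needed to feed a known ``pointwise eventual membership'' result into the complementary setting.

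Concretely, I would set $B(t) = A(t)^c$ and $B = A^c$, so that by Theorem~\ref{thm2.start} the set-valued function $B(t)$ converges to the set $B$ as $t \to \infty$. Applying Theorem~\ref{thm2.3} to $B(t)$ and its limit $B$ yields: for all $x \in B$ there exists $t_0 \in \mathbb{R}$ such that for all $t \ge t_0$ we have $x \in B(t)$. Rewriting this in the original notation, it states precisely that for all $x \in A^c$ there exists $t_0 \in \mathbb{R}$ such that for all $t \ge t_0$ we have $x \in A(t)^c$, which is the assertion of Corollary~\ref{cor2.4}.

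Since both ingredients are already established, there is no genuine obstacle; the only point requiring care is to invoke Theorem~\ref{thm2.start} explicitly so that Theorem~\ref{thm2.3} may legitimately be applied to $A(t)^c$ rather than $A(t)$. As an alternative that avoids Theorem~\ref{thm2.start}, one could mimic the proof of Theorem~\ref{thm2.3} verbatim: by Theorem~\ref{thm2.8}, every $x \in X$ eventually lies in $\{ A(t) \triangle A \}^c$, and since $A^c \subset X$ the same holds for every $x \in A^c$; one then checks the inclusion $\{ A(t) \triangle A \}^c \cap A^c \subset A(t)^c$ (which follows from the identity $\{ A(t) \triangle A \}^c = \{ A(t) \cap A \} \cup \{ A(t)^c \cap A^c \}$ after intersecting with $A^c$) to conclude that each $x \in A^c$ eventually belongs to $A(t)^c$. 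The complement route via Theorem~\ref{thm2.start} is the shorter of the two and is what I would present.
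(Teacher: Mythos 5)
Your proposal is correct and follows exactly the paper's own argument: invoke Theorem~\ref{thm2.start} to obtain the convergence of $A(t)^c$ to $A^c$, then apply Theorem~\ref{thm2.3} to this complementary set-valued function. The alternative route you sketch via Theorem~\ref{thm2.8} is also sound, but the complement argument you present is precisely what the paper does.
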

\begin{proof} 
Since $A(t)$ converges to $A$ as $t \to \infty$,
it follows from Theorem~\ref{thm2.start} that
$A(t)^c$ converges to $A^c$ as $t \to \infty$.
Applying Theorem~\ref{thm2.3}, we conclude that
for all $x \in A^c$, there exists $t_0 \in \mathbb{R}$ such that
for all $t \ge t_0$ we have $x \in A(t)^c$.
\end{proof}

The next theorem is needed to prove the unique
convergence of set-valued functions at infinity.

\begin{thm} \label{thm2.5}%%%%%%%%%%%%%%%%%%%%%%%% Theorem2.5 
Let $A(t)$ be a set-valued function that converges to limit $A$ as $t \to
\infty$, and let $B$ be a set with $B \not\subset A$.
Then, for some $x \in B$ and for all $t \in \mathbb{R}$,
there exists $t_0 \ge t$ such that we have $x \in A(t_0)^c$.
\end{thm}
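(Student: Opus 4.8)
The plan is to extract a witness element directly from the hypothesis $B \not\subset A$ and then feed it into Corollary~\ref{cor2.4}. First I would read off the meaning of $B \not\subset A$ set-theoretically: it says precisely that some element of $B$ fails to belong to $A$, i.e.\ there exists $x \in B$ with $x \in A^c$. This $x$ is exactly the element whose existence the theorem asserts, so fixing it once and for all at the outset is the natural first move.

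Next, since $A(t)$ converges to $A$ as $t \to \infty$, I would apply Corollary~\ref{cor2.4} to this particular $x \in A^c$. The corollary gives some $t^* \in \mathbb{R}$ such that for all $t \ge t^*$ we have $x \in A(t)^c$; that is, $x$ remains outside $A(t)$ for every sufficiently large $t$. This is the whole content of the convergence hypothesis that the theorem needs.

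Finally I would weaken the ``eventually'' statement of Corollary~\ref{cor2.4} into the ``cofinally often'' form demanded by the conclusion. Given an arbitrary $t \in \mathbb{R}$, set $t_0 = \max \{ t, t^* \}$; then $t_0 \ge t$ and $t_0 \ge t^*$, whence $x \in A(t_0)^c$. Thus for this fixed $x \in B$ and for every $t \in \mathbb{R}$ there is a $t_0 \ge t$ with $x \in A(t_0)^c$, which is exactly Theorem~\ref{thm2.5}.

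I do not expect a genuine obstacle here, since the theorem is essentially a reformulation of Corollary~\ref{cor2.4}. The only two points needing a word of care are the translation of $B \not\subset A$ into the existence of a point of $B$ lying in $A^c$, and the passage from the quantifier pattern ``for all $t \ge t^*$'' to ``for all $t$, there exists $t_0 \ge t$'' --- but this passage is immediate, because any property that holds for all sufficiently large $t$ in particular holds for arbitrarily large $t$.
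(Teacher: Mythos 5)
Your proof is correct and follows essentially the same route as the paper's: both arguments rest on Corollary~\ref{cor2.4} together with the observation that $B \not\subset A$ yields a witness $x \in B \cap A^c$. The only cosmetic difference is ordering --- you fix the witness first and then apply the corollary to it, whereas the paper applies the corollary to all of $A^c$, restricts to $A^c \cap B$, and notes nonemptiness afterward --- and your explicit choice $t_0 = \max\{t, t^*\}$ just spells out the quantifier-weakening step the paper leaves implicit.
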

\begin{proof}
Since $A(t)$ converges to $A$ as $t \to \infty$,
it follows from Corollary~\ref{cor2.4} that
for all $x \in A^c$, there exists $t_1 \in \mathbb{R}$
such that for all $t \ge t_1$ we have $x \in A(t)^c$.
Since $A^c \cap B$ is a subset of $A^c$, we see
that every element of $A^c \cap B$ belongs to $A(t)^c$ for all $t \ge t_1$.
Thus, for all $x \in A^c \cap B$, there exists $t_1 \in \mathbb{R}$
such that for all $t \ge t_1$ we have $x \in A(t)^c$.
Here, notice that there necessarily exists an element $x$ such that
$x \in A^c$ and $x \in B$, because $B \not\subset A$.
Thus, $A^c \cap B$ is not empty.
By this fact, we can say that 
for some $x \in B$ there exists
$t_1 \in \mathbb{R}$ such that for all $t \ge t_1$ we have $x \in A(t)^c$.
This implies that
for some $x \in B$ and for all $t \in \mathbb{R}$, there exists $t_0 \ge t$
such that we have $x \in A(t_0)^c$.
This completes the proof of Theorem~\ref{thm2.5}.
\end{proof}

\paragraph{Remark} 
The statement of Theorem~\ref{thm2.5},
``for some $x \in B$ and for all $t \in \mathbb{R}$, there exists $t_1 \ge t$
such that we have $x \in A(t_1)^c$," is the negation
of the statement of Theorem~\ref{thm2.3},
``for all $x \in B$, there exists $t_2 \in \mathbb{R}$
such that for all $t \ge t_2$ we have $x \in A(t)$."
This means that if $A$ is limit of $A(t)$,
the negation of the statement of
Theorem~\ref{thm2.3} holds
for all sets $B$ with $B \not\subset A$.

\vspace*{1\baselineskip}
We are now in a position to prove the uniqueness of limit of a set-valued function at infinity.
\begin{thm} \label{thm2.uni}
Let $A(t)$ be a set-valued function that converges to limits $A$ and $B$ as $t \to
\infty$. Then, $A = B$.
\end{thm}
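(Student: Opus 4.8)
The plan is to prove the two inclusions $B \subset A$ and $A \subset B$ separately; together they yield $A = B$. Each inclusion is established by contradiction, exploiting the fact (highlighted in the Remark following Theorem~\ref{thm2.5}) that the conclusion of Theorem~\ref{thm2.5} is precisely the logical negation of the conclusion of Theorem~\ref{thm2.3}. Since $A(t)$ is assumed to converge to \emph{both} $A$ and $B$, I can feed one limit into Theorem~\ref{thm2.5} and the other into Theorem~\ref{thm2.3}, and the two resulting statements about a single common element will collide.

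Concretely, to prove $B \subset A$, suppose instead that $B \not\subset A$. Applying Theorem~\ref{thm2.5} to the convergence $A(t) \to A$, with the set $B$ in the role of the hypothesis set, I obtain some element $x \in B$ such that for every $t \in \mathbb{R}$ there exists $t_0 \ge t$ with $x \in A(t_0)^c$; in other words, this $x$ lies outside $A(t)$ for arbitrarily large $t$. On the other hand, applying Theorem~\ref{thm2.3} to the convergence $A(t) \to B$, for \emph{every} element of $B$---in particular for this same $x$---there exists $t_1 \in \mathbb{R}$ such that $x \in A(t)$ for all $t \ge t_1$. Taking $t = t_1$ in the first statement produces some $t_0 \ge t_1$ with $x \notin A(t_0)$, while the second statement forces $x \in A(t_0)$, a contradiction. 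Hence $B \subset A$.

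The inclusion $A \subset B$ then follows by the symmetric argument, interchanging the roles of $A$ and $B$: assume $A \not\subset B$, apply Theorem~\ref{thm2.5} to $A(t) \to B$ and Theorem~\ref{thm2.3} to $A(t) \to A$, and derive the same collision. Combining the two inclusions gives $A = B$. I do not expect any serious obstacle here; the only points requiring care are the bookkeeping of which limit is fed into which theorem, and the observation that the single element $x$ produced by Theorem~\ref{thm2.5} may legitimately be the \emph{same} element to which Theorem~\ref{thm2.3} is applied---this is valid precisely because Theorem~\ref{thm2.3} quantifies universally over all of $B$.
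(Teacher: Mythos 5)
Your proposal is correct and follows essentially the same route as the paper: a proof by contradiction in which one of the two limits is fed into Theorem~\ref{thm2.5} and the other into Theorem~\ref{thm2.3}, so that the resulting statements about a single element of the offending set collide. The only cosmetic difference is the bookkeeping (you organize it as two separate inclusions with the roles of $A$ and $B$ swapped relative to the paper's two cases, and you spell out the quantifier instantiation explicitly), which changes nothing of substance.
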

\begin{proof}
Suppose for a contradiction that $A \not\subset B$ or
$B \not\subset A$ holds.

In case $A \not\subset B$, we see from Theorem~\ref{thm2.5} that
for some $x \in A$ and for all $t \in \mathbb{R}$, there
exists $t_1 \ge t$ such that we have $x \in A(t_1)^c$.
However, by Theorem~\ref{thm2.3} we find that
for all $x \in A$, there exists $t_2 \in \mathbb{R}$ such that for all $t \ge t_2$
we have $x \in A(t)$, which is a contradiction.

In case $B \not\subset A$, we can derive a contradiction similarly.

By the contradictions above, we conclude that $A \subset B$
and $B \subset A$, that is, $A = B$.
\end{proof}

\paragraph{Remark}
Theorem~\ref{thm2.uni} allows us to speak of \textit{the} limit
of a set-valued function at infinity.

\vspace*{1\baselineskip}
Now we prove the convergence of  the union and intersection
of convergent set-valued functions.
In proving the result, we use the fact that
\linebreak $\{ A \cup B \} \triangle \{C \cup D \} \subset
\{A \triangle C \} \cup \{ B \triangle D \}$
in the sets $A$, $B$, $C$, and $D$.
\begin{thm} \label{thm2.wakyo} %%%%%%%%%%%%%%%%%%%%%%%%% Theorem
Let $A(t)$ and $B(t)$ be set-valued functions that converge to the limits $A$ and $B$,
respectively, as $t \to \infty$. Then, the following hold:
\begin{enumerate}[$(1)$]
\item $\displaystyle \lim_{t \to \infty}A(t) \cup B(t) = A \cup B$.
\item $\displaystyle \lim_{t \to \infty}A(t) \cap B(t) = A \cap B$.
\end{enumerate}
\end{thm}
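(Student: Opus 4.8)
The plan is to route everything through the characterization in Theorem~\ref{thm2.8}, which lets me replace the quantifier-heavy statement of Definition~\ref{def2.conver} with the cleaner assertion that each point of the universe eventually lands in the complement of the relevant symmetric difference. For part $(1)$ it therefore suffices to show that for every $x \in X$ there is a $t_0 \in \mathbb{R}$ with $x \in \{ [A(t) \cup B(t)] \triangle [A \cup B] \}^c$ for all $t \ge t_0$.

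First I would fix an arbitrary $x \in X$. Applying Theorem~\ref{thm2.8} to $A(t) \to A$ and to $B(t) \to B$ separately yields thresholds $t_1$ and $t_2$ beyond which $x \in \{ A(t) \triangle A \}^c$ and $x \in \{ B(t) \triangle B \}^c$ respectively. Setting $t_0 = \max\{ t_1, t_2 \}$, both memberships hold simultaneously for $t \ge t_0$, so $x \in \{ A(t) \triangle A \}^c \cap \{ B(t) \triangle B \}^c = \{ [A(t) \triangle A] \cup [B(t) \triangle B] \}^c$. The key step is then to invoke property $(3)$ of the symmetric difference, namely $[A(t) \cup B(t)] \triangle [A \cup B] \subset [A(t) \triangle A] \cup [B(t) \triangle B]$, and pass to complements: complementation reverses this inclusion, so $x$ necessarily lands in $\{ [A(t) \cup B(t)] \triangle [A \cup B] \}^c$. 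Since $x$ was arbitrary, Theorem~\ref{thm2.8} delivers the convergence of the union.

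For part $(2)$ I would avoid a second direct computation by exploiting complement duality. Writing $A(t) \cap B(t) = \{ A(t)^c \cup B(t)^c \}^c$ and $A \cap B = \{ A^c \cup B^c \}^c$ via De Morgan's laws, I first apply Theorem~\ref{thm2.start} to deduce $A(t)^c \to A^c$ and $B(t)^c \to B^c$. Part $(1)$, already established, then gives $A(t)^c \cup B(t)^c \to A^c \cup B^c$, and a final application of Theorem~\ref{thm2.start} converts this back to $\{ A(t)^c \cup B(t)^c \}^c \to \{ A^c \cup B^c \}^c$, which is precisely $A(t) \cap B(t) \to A \cap B$.

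I expect the only genuinely delicate point to be the complement step in part $(1)$: one must verify carefully that the set inclusion supplied by property $(3)$ reverses correctly under complementation, so that membership of $x$ in the complement of the \emph{larger} set indeed forces membership in the complement of the \emph{smaller} symmetric difference. Everything else amounts to bookkeeping of the two thresholds and routine De Morgan manipulation.
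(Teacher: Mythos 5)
Your proposal is correct and follows essentially the same route as the paper's own proof: part $(1)$ combines Theorem~\ref{thm2.8} with the threshold $t_0 = \max\{t_1,t_2\}$, De Morgan's law, and the inclusion $\{A(t)\cup B(t)\}\triangle\{A\cup B\}\subset\{A(t)\triangle A\}\cup\{B(t)\triangle B\}$ with complementation reversing it, while part $(2)$ reduces to part $(1)$ via Theorem~\ref{thm2.start} and De Morgan duality, exactly as the paper does. The ``delicate'' complement step you flag is handled identically in the paper and is sound.
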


\begin{proof}
We first prove part $(1)$. Since $\displaystyle \lim_{t \to \infty}A(t) =A$,
it follows from Theorem~\ref{thm2.8} that
for all $x \in X$, there exists $t_1 \in \mathbb{R}$ such that
for all $t \ge t_1$ we have $x \in \{ A(t) \triangle A \}^c$.
Similarly, since
$\displaystyle \lim_{t \to \infty}B(t) = B$, it follows from Theorem~\ref{thm2.8}
that for all $x \in X$, there exists $t_2 \in \mathbb{R}$ such that
for all $t \ge t_2$ we have $x \in \{ B(t) \triangle B \}^c$.
Let $t_0 = \max \{ t_1,t_2\}$. Then, we see that every element of X belongs to both
$\{ A(t) \triangle A\}^c$ and $\{ B(t) \triangle B\}^c$ for all $t \ge t_0$.
Hence,
for all $x \in X$, there exists $t_0 \in \mathbb{R}$ such that
for all $t \ge t_0$ we have
$x \in \{ A(t) \triangle A\}^c \cap \{ B(t) \triangle B\}^c$.
Since $\{ A(t) \triangle A\}^c \cap \{ B(t) \triangle B\}^c
= \bigl[ \{ A(t) \triangle A\} \cup \{ B(t) \triangle B\} \bigr]^c$,
it follows that
for all $x \in X$ there exists $t_0 \in \mathbb{R}$ such that
for all $t \ge t_0$ we have
$x \in \bigl[ \{ A(t) \triangle A\} \cup \{ B(t) \triangle B\} \bigr]^c$.
Note that
$\{ A(t) \cup B(t) \} \triangle \{ A \cup B \}
\subset \{ A(t) \triangle A\} \cup \{ B(t) \triangle B\}$
for all $t \in \mathbb{R}$.
\footnote{
In the sets $A$, $B$, $C$, and $D$,
$\{ A \cup B \} \triangle \{C \cup D \} \subset
\{A \triangle C \} \cup \{ B \triangle D \}$
holds. For the properties of symmetric difference used in this paper,
see Settings in Introduction.
}
Thus, 
$\bigl[ \{ A(t) \cup B(t) \} \triangle \{ A \cup B \} \bigr]^c
\supset \bigl[ \{ A(t) \triangle A\} \cup \{ B(t) \triangle B\} \bigr]^c$
for all $t \in \mathbb{R}$.
Since $\bigl[ \{ A(t) \triangle A\} \cup \{ B(t) \triangle B\} \bigr]^c$
is a subset of $\bigl[ \{ A(t) \cup B(t) \} \triangle \{ A \cup B \} \bigr]^c$
for all $t \in \mathbb{R}$, we see
that every element of X belongs to
$\bigl[ \{ A(t) \cup B(t) \} \triangle \{ A \cup B \} \bigr]^c$
for all $t \ge t_0$.
Hence,
for all $x \in X$, there exists $t_0 \in \mathbb{R}$ such that
for all $t \ge t_0$ we have
$x \in \bigl[ \{ A(t) \cup B(t) \} \triangle \{ A \cup B \} \bigr]^c$.
By Theorem~\ref{thm2.8}, this means
$\displaystyle \lim_{t \to \infty}A(t) \cup B(t) = A \cup B$,
which establishes the proof of part $(1)$.

We next prove part $(2)$.
Since $\displaystyle \lim_{t \to \infty}A(t) =A$ and
$\displaystyle \lim_{t \to \infty}B(t) = B$,
we get
$\displaystyle \lim_{t \to \infty}A(t)^c =A^c$ and
$\displaystyle \lim_{t \to \infty}B(t)^c = B^c$
from Theorem~\ref{thm2.start}.
Using the result of part $(1)$, we obtain
\begin{equation*}
\lim_{t \to \infty}A(t)^c \cup B(t)^c = A^c \cup B^c.
\end{equation*}
Note that $A(t)^c \cup B(t)^c = [ A(t) \cap B(t) ]^c$
and that $A^c \cup B^c = [ A \cap B ]^c$.
Using these facts, we have
\begin{equation*}
\lim_{t \to \infty} [ A(t) \cap B(t) ]^c = [ A \cap B ]^c.
\end{equation*}
By Theorem~\ref{thm2.start}, the preceding equation implies
\begin{equation*}
\lim_{t \to \infty} A(t) \cap B(t) = A \cap B.
\end{equation*}
We have thus proved part $(2)$ of Theorem~\ref{thm2.wakyo}.
\end{proof}

\vspace*{1\baselineskip}
Using induction on $n$, we can extend Theorem~\ref{thm2.wakyo}
to $n$ set-valued functions.

\begin{cor}
Let $n$ be a positive integer.
If a set-valued function $A_i(t)$ converges to the limit
$A_i$ as $t \to \infty$ for each $1 \le i \le n$,
then the following hold:
\begin{enumerate}[$(1)$]
\item $\displaystyle \lim_{t \to \infty}$
{\scriptsize $\displaystyle \bigcup_{1 \le i \le n}$} $A_i(t)$
$=$ {\scriptsize $\displaystyle \bigcup_{1 \le i \le n}$} $A_i$.
\item $\displaystyle \lim_{t \to \infty}$
{\scriptsize $\displaystyle \bigcap_{1 \le i \le n}$} $A_i(t)$
$=$ {\scriptsize $\displaystyle \bigcap_{1 \le i \le n}$} $A_i$.
\end{enumerate}
\end{cor}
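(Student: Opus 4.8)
The plan is to argue by induction on $n$, with Theorem~\ref{thm2.wakyo} serving as the engine at each step. The base case $n = 1$ is immediate, since the hypothesis that $A_1(t)$ converges to $A_1$ is exactly the assertion for a single set-valued function; equally one may take $n = 2$ as the base case, which is precisely Theorem~\ref{thm2.wakyo}. For the inductive step, I would assume that both conclusions hold for some positive integer $n$ and deduce them for $n + 1$.

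For part $(1)$, the key observation is the regrouping
\begin{equation*}
\bigcup_{1 \le i \le n+1} A_i(t) = \Bigl( \bigcup_{1 \le i \le n} A_i(t) \Bigr) \cup A_{n+1}(t).
\end{equation*}
By the inductive hypothesis, the finite union $\bigcup_{1 \le i \le n} A_i(t)$ is itself a single set-valued function converging to $\bigcup_{1 \le i \le n} A_i$ as $t \to \infty$, and by assumption $A_{n+1}(t)$ converges to $A_{n+1}$. Applying part $(1)$ of Theorem~\ref{thm2.wakyo} to these two convergent set-valued functions, their union converges to $\bigl( \bigcup_{1 \le i \le n} A_i \bigr) \cup A_{n+1} = \bigcup_{1 \le i \le n+1} A_i$, which is the desired conclusion for $n+1$.

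Part $(2)$ follows by the same regrouping with intersection in place of union, invoking part $(2)$ of Theorem~\ref{thm2.wakyo} at the inductive step. I expect no genuine obstacle here: the only point requiring a little care is to regard the $n$-fold union (respectively intersection) as a single object, so that the two-function theorem applies legitimately and the induction closes cleanly. With the inductive step established in both cases, the result holds for every positive integer $n$.
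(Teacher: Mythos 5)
Your proposal is correct and follows essentially the same route as the paper: induction on $n$, regrouping the $(n+1)$-fold union (respectively intersection) as a two-term union of the $n$-fold union with the last function, and closing the step with Theorem~\ref{thm2.wakyo}. The only difference is notational (the paper phrases the step as $k-1 \to k$ and speaks of sorting the $k$ functions arbitrarily), so there is nothing substantive to add.
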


\begin{proof}
We first prove part $(1)$.
We use induction on $n$. The claim is trivial for $n=1$.
Assume it is true for $n= k-1$. Take $k-1$ convergent
set-valued functions arbitrarily from $k$ convergent set-valued functions.
We denote the $k-1$ convergent set-valued functions
each by $A_1 (t), A_2 (t), \dots,
A_{k-1} (t)$ and denote the rest by $A_k (t)$.
Then by the induction assumption,
\begin{equation*}
\lim_{t \to \infty}
[ A_1 (t) \cup A_2 (t) \cup \dots
\cup A_{k-1} (t) ] = A_1 \cup A_2 \cup \dots \cup A_{k-1}.
\end{equation*}
Note that $\{ A_1 (t) \cup A_2 (t) \cup \dots
\cup A_{k-1} (t) \} \cup A_k (t)$ is the union of the
two set-valued functions $A_1 (t) \cup A_2 (t) \cup \dots
\cup A_{k-1} (t)$ and $A_k (t)$.
Thus, by Theorem~\ref{thm2.wakyo} we obtain
\begin{align*}
\lim_{t \to \infty}
[ \{ A_1 (t) \cup A_2 (t) \cup \dots
\cup A_{k-1} (t) \} \cup A_k (t) ]
&= \{ A_1 \cup A_2 \cup \dots \cup A_{k-1} \} \cup A_k \\
&= A_1 \cup A_2 \cup \dots \cup A_{k-1} \cup A_k.
\end{align*}
Since we arbitrarily sort the $k$ convergent set-valued functions
into $A_1 (t), A_2 (t), \dots, A_{k-1} (t)$
and $A_k (t)$, it follows that
\begin{equation*}
\lim_{t \to \infty}
[ A_1 (t) \cup A_2 (t) \cup \dots
\cup A_k (t) ] = A_1 \cup A_2 \cup \dots \cup A_k.
\end{equation*}
By the principle of induction, we have completed the proof of part$(1)$.

Part $(2)$ is proved similarly.
\end{proof}

We move to theorems on the convergence
of a set-valued function to ``supremum" and ``infimum."
We need to define some concepts.

\begin{defi} \label{expa}
Let $A(t)$ be a set-valued function and $I$ an interval in $\mathbb{R}$.
We say that $A(t)$ is expanding on $I$ if for all $t_1,t_2 \in I$ with
$t_1 < t_2$ we have $A(t_1) \subset A(t_2)$.
\end{defi}

\begin{defi} \label{shri}
Let $A(t)$ be a set-valued function and $I$ an interval in $\mathbb{R}$.
We say that $A(t)$ is shrinking on $I$ if for all $t_1,t_2 \in I$ with
$t_1 < t_2$ we have $A(t_2) \subset A(t_1)$.
\end{defi}

\begin{defi} \label{supre}
Let $A(t)$ be a set-valued function and $U$ a set.
We say that $U$ is an upper bound of $A(t)$ if for all $t \in \mathbb{R}$
we have $A(t) \subset U$. In particular, $S$, an upper bound of $A(t)$,
is called supremum of $A(t)$ if for all upper bounds $U$ of $A(t)$ we have
$S \subset U$.
\end{defi}

\begin{defi} \label{infim}
Let $A(t)$ be a set-valued function and $L$ a set.
We say that $L$ is a lower bound of $A(t)$ if for all $t \in \mathbb{R}$
we have $L \subset A(t)$. In particular, $N$, a lower bound of $A(t)$,
is called infimum of $A(t)$ if for all lower bounds $L$ of $A(t)$ we have
$L \subset N$.
\end{defi}

If a set-valued function $A(t)$ has an upper bound, then
supremum of $A(t)$ exists. Furthermore, it is unique.

Let $\{ S_{\lambda} \}_{\lambda \in \Lambda}$ be an
indexed family of upper bounds for $A(t)$,
where the index $\lambda$ is an element of the index set $\Lambda$.
Then, the intersection
{\tiny $\displaystyle \bigcap_{\lambda \in \Lambda}$} $S_{\lambda}$
is supremum of $A(t)$ because
{\tiny $\displaystyle \bigcap_{\lambda \in \Lambda}$} $S_{\lambda}$
$\subset S_{\lambda}$ for all $\lambda \in \Lambda$
and $A(t) \subset$
{\tiny $\displaystyle \bigcap_{\lambda \in \Lambda}$} $S_{\lambda}$
for all $t$.
In addition, {\tiny $\displaystyle \bigcap_{\lambda \in \Lambda}$} $S_{\lambda}$
is uniquely determined by the definition of the intersection.
\footnote{
The intersection {\tiny $\displaystyle \bigcap_{\lambda \in \Lambda}$} $S_{\lambda}$
is defined as follows:
\begin{equation*}
{\tiny \bigcap_{\lambda \in \Lambda}} S_{\lambda}
= \{ x \in X \bigm| x \in S_{\lambda} \; \; \text{for all} \; \lambda \in \Lambda \}.
\end{equation*}
}

Similarly, if a set-valued function $A(t)$ has a lower bound, infimum of
$A(t)$ exists and is unique.
Let $\{ N_{\sigma} \}_{\sigma \in \Sigma}$ be an indexed family of lower bounds for $A(t)$,
where the index $\sigma$ is an element of the index set $\Sigma$.
Then, the union \linebreak
{\tiny $\displaystyle \bigcup_{\sigma \in \Sigma}$} $N_{\sigma}$
is infimum of $A(t)$ because $N_{\sigma} \subset$
{\tiny $\displaystyle \bigcup_{\sigma \in \Sigma}$} $N_{\sigma}$
for all $\sigma \in \Sigma$ and
{\tiny $\displaystyle \bigcup_{\sigma \in \Sigma}$} $N_{\sigma}$ $\subset A(t)$
for all $t$.
Additionally, {\tiny $\displaystyle \bigcup_{\sigma \in \Sigma}$} $N_{\sigma}$
is uniquely determined by the definition of the union.
\footnote{
The union {\tiny $\displaystyle \bigcup_{\sigma \in \Sigma}$} $N_{\sigma}$
is defined as follows.
\begin{equation*}
{\tiny \bigcup_{\sigma \in \Sigma}} N_{\sigma}
= \{ x \in X \bigm| x \in N_{\sigma} \; \; \text{for some} \; \sigma \in \Sigma \}.
\end{equation*}
}

Since supremum and infimum of a set-valued function are unique,
we speak of \textit{the} supremum and \textit{the} infimum.

In fact, every set-valued function has an upper bound and a lower bound
because it is contained in the universe $X$, and contains the empty set
$\emptyset$. Thus, there necessarily exist the supremum and the infimum
for every set-valued function.

\vspace*{1\baselineskip}
Having defined the basic concepts, we shall derive
theorems concerning them. The next theorem is on
the convergence to the supremum.

\begin{thm} \label{thm2.exsup}
Let $A(t)$ be a set-valued function that has the supremum $S$.
If $A(t)$ is expanding on $\mathbb{R}$, then $A(t)$ converges to $S$
as $t \to \infty$.
\end{thm}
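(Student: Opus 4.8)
The plan is to verify convergence through the characterization supplied by Theorem~\ref{thm2.8}: rather than unwinding Definition~\ref{def2.conver} directly, it suffices to show that for all $x \in X$ there exists $t_0 \in \mathbb{R}$ such that $x \in \{ A(t) \triangle S \}^c$ for all $t \ge t_0$. Using property $(2)$ of the symmetric difference together with complementation, I would first record that $\{ A(t) \triangle S \}^c = \{ A(t) \cap S \} \cup \{ A(t)^c \cap S^c \}$, exactly as in the proof of Theorem~\ref{thm2.3}. The goal then reads: for each $x$, eventually either $x$ lies in both $A(t)$ and $S$, or in neither. I would fix an arbitrary $x \in X$ and split into the cases $x \in S^c$ and $x \in S$.

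In the case $x \in S^c$, I would invoke that the supremum $S$ is in particular an upper bound of $A(t)$ (Definition~\ref{supre}), so $A(t) \subset S$ for every $t$ and hence $x \notin A(t)$ for every $t$. Therefore $x \in A(t)^c \cap S^c \subset \{ A(t) \triangle S \}^c$ for all $t$, and any choice of $t_0$ suffices. This case is immediate and uses neither the expanding hypothesis nor the least-upper-bound property.

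The substantive case is $x \in S$, and here the key step is to identify the supremum concretely as $S = \bigcup_{t \in \mathbb{R}} A(t)$. I would argue this straight from Definition~\ref{supre}: the set $\bigcup_{t \in \mathbb{R}} A(t)$ is an upper bound of $A(t)$, since each $A(t)$ is contained in it; and it is contained in every upper bound $U$, since $A(t) \subset U$ for all $t$ forces $\bigcup_{t \in \mathbb{R}} A(t) \subset U$. By uniqueness of the supremum this union equals $S$. Consequently $x \in S$ produces some $t^* \in \mathbb{R}$ with $x \in A(t^*)$. Now the expanding hypothesis (Definition~\ref{expa}) enters: for all $t \ge t^*$ we have $A(t^*) \subset A(t)$, whence $x \in A(t)$, so $x \in A(t) \cap S \subset \{ A(t) \triangle S \}^c$ for all $t \ge t^*$. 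Taking $t_0 = t^*$ closes this case.

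Combining the two cases establishes the condition of Theorem~\ref{thm2.8} for the limit $S$, which yields $\displaystyle \lim_{t \to \infty} A(t) = S$. I expect the only genuine obstacle to be the identification $S = \bigcup_{t \in \mathbb{R}} A(t)$, that is, extracting a single witness $t^*$ with $x \in A(t^*)$ from mere membership of $x$ in the supremum; this is precisely where the least-upper-bound property is indispensable, and the expanding property then propagates membership to all larger $t$. The remaining manipulations are routine and already available from the earlier lemmas.
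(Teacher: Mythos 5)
Your proof is correct, but it takes a different route from the paper's. The paper verifies Definition~\ref{def2.conver} directly: it fixes $x \in A(t_i) \triangle S$, notes that $A(t_i) \triangle S = S \cap A(t_i)^c$ (so $x \in S$), and then argues by contradiction that $x$ must lie in some $A(t_j)$ --- for if $x \in A(t)^c$ for every $t$, then $S \cap \{x\}^c$ would be an upper bound strictly contained in $S$, contradicting that $S$ is the supremum. You instead reduce to the criterion of Theorem~\ref{thm2.8}, split on $x \in S$ versus $x \in S^c$, and replace the pointwise contradiction by the structural identity $S = \bigcup_{t \in \mathbb{R}} A(t)$, from which the witness $t^*$ with $x \in A(t^*)$ is extracted constructively. (In fact you need even less than the uniqueness argument you give: since $\bigcup_{t} A(t)$ is an upper bound and $S$ is the supremum, $S \subset \bigcup_{t} A(t)$ already suffices to produce $t^*$.) Both proofs rest on the same two ingredients --- the least-upper-bound property supplies the witness, and the expanding hypothesis propagates membership to all larger $t$ --- so the mathematical core is shared. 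What your version buys is the avoidance of contradiction and a reusable identification of the supremum as the union of the values of $A(t)$, dual to the paper's own observation that the supremum is the intersection of all upper bounds; what it costs is the extra (trivial) case $x \in S^c$, which the paper never has to consider because elements of $A(t_i) \triangle S$ automatically lie in $S$, plus the dependence on Theorem~\ref{thm2.8}, which the paper's argument does not use.
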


\begin{proof}
Take an arbitrary point $t_i \in \mathbb{R}$ and choose
an arbitrary element $x$ from $A(t_i) \triangle S$.
Then, we show that for some point $t_j \in \mathbb{R}$
we have $x \in A(t_j)$.
Suppose for a contradiction that for all $t \in \mathbb{R}$
we have $x \in A(t)^c$.
Since $x \in A(t)^c$ and $A(t) \subset S$ for all $t \in \mathbb{R}$,
we get $A(t) \subset S \cap \{ x \}^c$ for all $t \in \mathbb{R}$,
which means that $S \cap \{ x \}^c$ is an upper bound of $A(t)$.
Note that $x$, the element of $A(t_i) \triangle S$,
belongs to $S$, because
\begin{align*}
A(t_i) \triangle S
&= \{ A(t_i) \cup S \} \cap \{ A(t_i) \cap S \}^c \\
&= S \cap A(t_i)^c
\end{align*}
Thus, $S \cap \{ x \}^c \subsetneq S$, which contradicts
the assumption that $S$ is the supremum of $A(t)$.

Therefore, for some point $t_j \in \mathbb{R}$ we have $x \in A(t_j)$.
Since $A(t)$ is expanding on $\mathbb{R}$,
$x \in A(t)$ for all $t \ge t_j$.
Note that $A(t) \triangle S \subset A(t)^c$ because
$A(t) \triangle S = S \cap A(t)^c$.
Considering $x \in A(t)$ for all $t \ge t_j$ and $A(t) \triangle S \subset A(t)^c$
for all $t \in \mathbb{R}$, we see that $x \not\in A(t) \triangle S$
for all $t \ge t_j$.

Therefore, we have proved the theorem.
\end{proof}

\paragraph{Remark}
If a set-valued function $A(t)$ is expanding and has no
upper bound except the universe $X$,
then $A(t)$ converges to $X$ as $t \to \infty$
because X is the supremum of $A(t)$.
We give its example below:
\begin{equation*}
A(t) = \{ (x,y) \in \mathbb{R}^2 \bigm| x^2 + y^2 < t^2 \},
\end{equation*}
where we define $A(t)$ for $t >0$.
The set-valued function $A(t)$ is expanding for all $t >0$ and
$A(t) \subset \mathbb{R}^2$ for all $t >0$.
Also, $A(t)$ has no upper bound except the universe $\mathbb{R}^2$,
because for any set $B \subsetneq \mathbb{R}^2$, there exists $t_0 > 0$
such that $B \subset A(t_0)$, which implies $A(t_0) \not\subset B$.
Thus, $A(t)$ converges to the universe $\mathbb{R}^2$ as $t \to \infty$.

\vspace*{1\baselineskip}
The next theorem is on the convergence to
the infimum.

\begin{thm} \label{thm2.shinf}
Let $A(t)$ be a set-valued function that has the infimum $N$.
If $A(t)$ is shrinking on $\mathbb{R}$, then $A(t)$ converges to $N$
as $t \to \infty$.
\end{thm}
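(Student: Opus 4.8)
The plan is to reduce this theorem to the already-proved convergence to the supremum (Theorem~\ref{thm2.exsup}) by passing to complements, rather than repeating the argument from scratch. The key observation is that the operation $S \mapsto S^c$ interchanges the notions of shrinking and expanding, and interchanges infimum and supremum.

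First I would verify that $A(t)^c$ is expanding on $\mathbb{R}$. Indeed, if $t_1 < t_2$, then $A(t_2) \subset A(t_1)$ by Definition~\ref{shri}, and taking complements reverses the inclusion to give $A(t_1)^c \subset A(t_2)^c$, which is exactly the condition of Definition~\ref{expa}.

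Next---and this is the step that requires the most care---I would show that $N^c$ is the supremum of $A(t)^c$. Since $N$ is a lower bound of $A(t)$, we have $N \subset A(t)$ for all $t$, hence $A(t)^c \subset N^c$ for all $t$, so $N^c$ is an upper bound of $A(t)^c$ in the sense of Definition~\ref{supre}. To see that it is the least such, note that complementation is a bijection between the lower bounds of $A(t)$ and the upper bounds of $A(t)^c$: if $U$ is any upper bound of $A(t)^c$, then $A(t)^c \subset U$ for all $t$ gives $U^c \subset A(t)$ for all $t$, so $U^c$ is a lower bound of $A(t)$; since $N$ is the infimum, $U^c \subset N$, and complementing yields $N^c \subset U$. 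Thus $N^c$ is contained in every upper bound of $A(t)^c$, establishing the claim.

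Having these two facts, I would apply Theorem~\ref{thm2.exsup} to the expanding set-valued function $A(t)^c$ with supremum $N^c$ to conclude that $A(t)^c$ converges to $N^c$ as $t \to \infty$. Finally, Theorem~\ref{thm2.start}, applied to the set-valued function $A(t)^c$ and the set $N^c$, converts this into the convergence of $(A(t)^c)^c = A(t)$ to $(N^c)^c = N$, which is the desired conclusion. I expect the only genuine obstacle to be the identification of $N^c$ as the supremum; the rest is a routine application of the duality already encoded in Theorem~\ref{thm2.start}. As an alternative, one could give a direct proof mirroring that of Theorem~\ref{thm2.exsup}: for $x \in A(t_i) \triangle N = A(t_i) \cap N^c$, if $x$ belonged to $A(t)$ for every $t$, then $N \cup \{ x \}$ would be a lower bound strictly larger than $N$, contradicting infimality; hence $x \in A(t_j)^c$ for some $t_j$, and the shrinking property propagates this to all $t \ge t_j$, giving $x \not\in A(t) \triangle N$.
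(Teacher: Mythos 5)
Your proposal is correct, but your primary argument takes a genuinely different route from the paper. The paper proves the theorem directly, mirroring its proof of Theorem~\ref{thm2.exsup}: for $x \in A(t_i) \triangle N$ it argues by contradiction that $x \in A(t_j)^c$ for some $t_j$ (otherwise $N \cup \{x\}$ would be a lower bound strictly containing $N$, using $A(t_i) \triangle N = A(t_i) \cap N^c$ to see $x \in N^c$), and then uses the shrinking property together with $A(t) \triangle N \subset A(t)$ to conclude $x \not\in A(t) \triangle N$ for all $t \ge t_j$ --- this is exactly the ``alternative'' you sketch in your last sentence. Your main proof instead reduces the statement to Theorem~\ref{thm2.exsup} via complementation: you verify that $A(t)^c$ is expanding, that complementation gives a bijection between lower bounds of $A(t)$ and upper bounds of $A(t)^c$ (so $N^c$ is the supremum of $A(t)^c$), apply Theorem~\ref{thm2.exsup}, and finish with the duality of Theorem~\ref{thm2.start}. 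All of these steps are sound against the paper's Definitions~\ref{expa}--\ref{infim}. What your approach buys is economy and a conceptual point: it makes explicit that Theorems~\ref{thm2.exsup} and~\ref{thm2.shinf} are dual statements under $S \mapsto S^c$, so only one of them needs a from-scratch argument; the cost is that the correctness of the reduction hinges on the bound-exchange lemma you rightly flag as the delicate step, whereas the paper's direct proof is self-contained and exhibits the contradiction witness $N \cup \{x\}$ explicitly.
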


\begin{proof}
Take an arbitrary point $t_i \in \mathbb{R}$ and choose
an arbitrary element $x$ from $A(t_i) \triangle N$.
Then, we show that for some point $t_j \in \mathbb{R}$
we have $x \in A(t_j)^c$.
Suppose for a contradiction that for all $t \in \mathbb{R}$
we have $x \in A(t)$.
Since $x \in A(t)$ and $N \subset A(t)$ for all $t \in \mathbb{R}$,
we get $N \cup \{ x \} \subset A(t)$ for all $t \in \mathbb{R}$,
which means that $N \cup \{ x \}$ is a lower bound of $A(t)$.
Note that $x$, the element of $A(t_i) \triangle N$,
belongs to $N^c$, because
\begin{align*}
A(t_i) \triangle N
&= \{ A(t_i) \cup N \} \cap \{ A(t_i) \cap N \}^c \\
&= A(t_i) \cap N^c
\end{align*}
Thus, $N \subsetneq N \cup \{ x \}^c$, which contradicts
the assumption that $N$ is the infimum of $A(t)$.

Therefore, for some point $t_j \in \mathbb{R}$ we have $x \in A(t_j)^c$.
Since $A(t)$ is shrinking on $\mathbb{R}$,
$x \in A(t)^c$ for all $t \ge t_j$.
Note that $A(t) \triangle N \subset A(t)$ because
$A(t) \triangle N = A(t) \cap N^c$.
Considering $x \in A(t)^c$ for all $t \ge t_j$ and $A(t) \triangle N \subset A(t)$
for all $t \in \mathbb{R}$, we see that $x \not\in A(t) \triangle N$
for all $t \ge t_j$.

Therefore, we have proved the theorem.
\end{proof}

\paragraph{Remark}
Consider again the set-valued functions $A(t)$, $B(t)$, $C(t)$, $D(t)$ and
the sets $A$, $B$ in Subsection~\ref{sec2.exa}.

From Definition~\ref{expa} and~\ref{supre},
we find that $A(t)$ and $B(t)$ are expanding for all $t > 1$,
and that $A$ is their supremum.
On the other hand, we see from Definition~\ref{shri} and~\ref{infim}
that $C(t)$ and $D(t)$ is shrinking for all $t>1$, and that
$B$ is their infimum.

Theorem~\ref{thm2.exsup} corresponds to the fact that
the expanding set-valued functions $A(t)$ and $B(t)$ converge to
their supremum $A$ as $t \to \infty$; Theorem~\ref{thm2.shinf} corresponds to
the fact that the shrinking set-valued functions $C(t)$ and $D(t)$
converge to their infimum B as $t \to \infty$.

\vspace*{1\baselineskip}
We next show the squeeze theorem of set-valued functions.
In proving the theorem, we use the fact that
$A \triangle B = \{ A \cap B^c \} \cup \{ A^c \cap B \}$
in the sets $A$ and $B$.
\begin{thm} \label{thm2.squ}
Let $A(t)$, $B(t)$, and $C(t)$ be set-valued functions such that
$A(t) \subset B(t)$ and $B(t) \subset C(t)$ for all $t \in \mathbb{R}$.
If $A(t)$ and $C(t)$ converge to a set $A$ as $t \to \infty$, then
$B(t)$ converges to A as $t \to \infty$.
\end{thm}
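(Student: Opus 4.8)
The plan is to invoke Theorem~\ref{thm2.8} as the working criterion for convergence at infinity. Since $A(t)$ and $C(t)$ both converge to $A$, Theorem~\ref{thm2.8} tells us that for each $x \in X$ there are thresholds beyond which $x$ lies in $\{ A(t) \triangle A \}^c$ and in $\{ C(t) \triangle A \}^c$ respectively; taking the larger of the two thresholds, $x$ lies in both complements simultaneously for all sufficiently large $t$. The goal is then to show that this forces $x \in \{ B(t) \triangle A \}^c$, after which Theorem~\ref{thm2.8}, applied in the reverse direction, immediately yields $\displaystyle \lim_{t \to \infty} B(t) = A$.

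The crux is the purely set-theoretic inclusion
\begin{equation*}
B(t) \triangle A \subset \{ A(t) \triangle A \} \cup \{ C(t) \triangle A \},
\end{equation*}
valid for every $t \in \mathbb{R}$, which rests on the hypothesis $A(t) \subset B(t) \subset C(t)$. I would prove it by writing each symmetric difference in the form $P \triangle Q = \{ P \cap Q^c \} \cup \{ P^c \cap Q \}$ and analyzing the two ways an element $x$ can belong to $B(t) \triangle A$. If $x \in B(t) \cap A^c$, then $x \in C(t)$ by $B(t) \subset C(t)$, so $x \in C(t) \cap A^c \subset C(t) \triangle A$. If instead $x \in B(t)^c \cap A$, then $x \not\in A(t)$ by $A(t) \subset B(t)$, so $x \in A(t)^c \cap A \subset A(t) \triangle A$. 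These two cases exhaust $B(t) \triangle A$ and establish the inclusion.

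Taking complements of the displayed inclusion gives
\begin{equation*}
\{ A(t) \triangle A \}^c \cap \{ C(t) \triangle A \}^c \subset \{ B(t) \triangle A \}^c
\end{equation*}
for all $t \in \mathbb{R}$. Combining this with the first paragraph, for each $x \in X$ I choose $t_0$ to be the maximum of the two thresholds supplied by Theorem~\ref{thm2.8}; then $x \in \{ A(t) \triangle A \}^c \cap \{ C(t) \triangle A \}^c$ for all $t \ge t_0$, and hence $x \in \{ B(t) \triangle A \}^c$ for all $t \ge t_0$. Since $x \in X$ was arbitrary, Theorem~\ref{thm2.8} gives the convergence of $B(t)$ to $A$.

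The main obstacle is the set inclusion in the second paragraph; everything else is a mechanical application of Theorem~\ref{thm2.8} together with the $\max$ device already used in the proof of Theorem~\ref{thm2.wakyo}. The point to watch is that both containments $A(t) \subset B(t)$ and $B(t) \subset C(t)$ are genuinely used — each governs exactly one of the two cases — which is precisely why a two-sided squeeze, rather than a one-sided bound, is what the hypothesis must supply.
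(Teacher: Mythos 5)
Your proof is correct and follows essentially the same route as the paper's: both hinge on the inclusion $B(t) \triangle A \subset \{ A(t) \triangle A \} \cup \{ C(t) \triangle A \}$ (the paper derives it algebraically from $A \triangle B = \{A \cap B^c\} \cup \{A^c \cap B\}$, you verify it by the same two-case element argument) and then conclude via Theorem~\ref{thm2.8} with the $\max$-of-thresholds device. No gap to report.
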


\begin{proof}
To prove the theorem, we first show that
\begin{equation*}
B(t) \triangle A \subset
\{ A(t) \triangle A \} \cup \{ C(t) \triangle A \}.
\end{equation*}
Note $\{ A(t) \cap A^c \} \subset \{ C(t) \cap A^c \}$
and $\{ C(t)^c \cap A \} \subset \{ A(t)^c \cap A \}$
because $A(t) \subset C(t)$ for all $t \in \mathbb{R}$.
Using these inclusions, we get
\begin{align*}
\{ A(t) \triangle A \} \cup \{ C(t) \triangle A \}
&= \bigl[ \{ A(t) \cap A^c \} \cup \{ A(t)^c \cap A \} \bigr] \cup
\bigl[ \{ C(t) \cap A^c \} \cup \{ C(t)^c \cap A \} \bigr] \\
&=  \bigl[ \{ A(t) \cap A^c \} \cup \{ C(t) \cap A^c \} \bigr]
\cup \bigl[ \{ A(t)^c \cap A \} \cup \{ C(t)^c \cap A \} \bigr] \\
&= \{ C(t) \cap A^c \} \cup \{ A(t)^c \cap A \}. 
\end{align*}
Note $\{ B(t) \cap A^c \} \subset \{ C(t) \cap A^c \}$
and $\{ B(t)^c \cap A \} \subset \{ A(t)^c \cap A \}$ for all $t \in \mathbb{R}$
because $B(t) \subset C(t)$ and $A(t) \subset B(t)$ for all $t \in \mathbb{R}$.
By these facts, we are led to
\begin{align*}
\{ C(t) \cap A^c \} \cup \{ A(t)^c \cap A \}
&\supset \{ B(t) \cap A^c \} \cup \{ B(t)^c \cap A \} \\
&= B(t) \triangle A.
\end{align*}
Thus, we obtain
$\{ B(t) \triangle A \}
\subset
\{ A(t) \triangle A \} \cup \{ C(t) \triangle A \}$.

By using the inclusion above, we show that
for all $x \in X$, there exists $t_0 \in \mathbb{R}$
such that for all $t \ge t_0$ we have $x \in \{ B(t) \triangle A \}^c$.
Since $A(t)$ converges to $A$ as $t \to \infty$,
it follows from Theorem~\ref{thm2.8} that
for all $x \in X$, there exists $t_1 \in \mathbb{R}$
such that for all $t \ge t_1$ we have $x \in \{ A(t) \triangle A \}^c$.
Similarly, since $C(t)$ converges to $A$ as $t \to \infty$,
it follows from Theorem~\ref{thm2.8} that
for all $x \in X$, there exists $t_2 \in \mathbb{R}$
such that for all $t \ge t_2$ we have $x \in \{ C(t) \triangle A \}^c$.
Let $t_0 = \max \{ t_1,t_2 \}$. Then, we see that
every element of $X$ belongs to both $\{ A(t) \triangle A \}^c$
and $\{ C(t) \triangle A \}^c$ for all $t \ge t_0$.
Thus,
for all $x \in X$, there exists $t_0 \in \mathbb{R}$
such that for all $t \ge t_0$ we have
$x \in \{ A(t) \triangle A \}^c \cap \{ C(t) \triangle A \}^c$.
Since $\{ B(t) \triangle A \}
\subset
\{ A(t) \triangle A \} \cup \{ C(t) \triangle A \}$,
we get $\{ B(t) \triangle A \}^c
\supset
\bigl[ \{ A(t) \triangle A \} \cup \{ C(t) \triangle A \} \bigr]^c$.
Hence,
$\{ B(t) \triangle A \}^c
\supset
\{ A(t) \triangle A \}^c \cap \{ C(t) \triangle A \}^c$.
Since $\{ A(t) \triangle A \}^c \cap \{ C(t) \triangle A \}^c$
is a subset of $\{ B(t) \triangle A \}^c$, we see that
every element of $X$ belongs to $\{ B(t) \triangle A \}^c$
for all $t \ge t_0$.
Consequently,
for all $x \in X$, there exists $t_0 \in \mathbb{R}$
such that for all $t \ge t_0$ we have
$x \in \{ B(t) \triangle A \}^c$.
By Theorem~\ref{thm2.8},
B(t) converges to $A$ as $t \to \infty$.
\end{proof}

In the rest of the subsection, we collect some miscellaneous theorems.

\begin{thm} \label{cor2.6}
Let $A(t)$ be a set-valued function that converges to the limit $A$ as $t \to
\infty$, and let $B$ be a set such that for all $x \in B$,
there exists $t_0 \in \mathbb{R}$ such that for all $t \ge t_0$
we have $x \in A(t)$.
Then, $B \subset A$.
\end{thm}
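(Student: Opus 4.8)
The plan is to argue by contradiction, reducing the claim to the complementary convergence already recorded in Corollary~\ref{cor2.4}. Suppose, contrary to the conclusion, that $B \not\subset A$. Then there exists an element $x_0 \in B$ with $x_0 \notin A$, that is, $x_0 \in A^c$.

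First I would invoke Corollary~\ref{cor2.4}. Since $A(t)$ converges to $A$ as $t \to \infty$, the corollary guarantees that the set-valued function eventually excludes every element of $A^c$; applied to our particular $x_0 \in A^c$, there exists $t_1 \in \mathbb{R}$ such that $x_0 \in A(t)^c$ for all $t \ge t_1$. Next I would apply the hypothesis on $B$ to the \emph{same} witness $x_0 \in B$: it yields some $t_2 \in \mathbb{R}$ such that $x_0 \in A(t)$ for all $t \ge t_2$.

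Finally, setting $t_* = \max\{t_1, t_2\}$ and choosing any $t \ge t_*$, I would obtain simultaneously $x_0 \in A(t)$ and $x_0 \in A(t)^c$, which is impossible because $A(t) \cap A(t)^c = \emptyset$. This contradiction forces $B \subset A$, completing the proof.

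I expect no serious obstacle here, since the argument is essentially a quantifier-matching contradiction. The only point demanding genuine care is to apply both Corollary~\ref{cor2.4} and the hypothesis to the identical element $x_0$, and then to merge the two thresholds $t_1$ and $t_2$ via a maximum so that a single value of $t$ simultaneously exhibits the incompatible memberships. As an alternative route, one could instead appeal directly to Theorem~\ref{thm2.5}: if $B \not\subset A$, that theorem supplies an $x_0 \in B$ with $x_0 \in A(t_0)^c$ for arbitrarily large $t_0$, which likewise clashes with the hypothesis that $x_0$ eventually lies in every $A(t)$; this also matches the Remark following Theorem~\ref{thm2.5}, since the present statement is precisely a converse-type companion to Theorem~\ref{thm2.3}.
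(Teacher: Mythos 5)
Your proposal is correct, and your parenthetical ``alternative route'' at the end is in fact exactly the paper's own proof: the paper argues by contradiction, invokes Theorem~\ref{thm2.5} to produce some $x \in B$ lying in $A(t_1)^c$ for arbitrarily large $t_1$, and observes (as its Remark makes explicit) that this statement is precisely the negation of the hypothesis on $B$, so the contradiction is immediate. Your main line of argument differs in the decomposition: rather than citing Theorem~\ref{thm2.5}, you go back to Corollary~\ref{cor2.4}, apply it to a single witness $x_0 \in B \cap A^c$, and make the quantifier clash explicit by merging the two thresholds with $t_* = \max\{t_1, t_2\}$. Since the paper itself derives Theorem~\ref{thm2.5} from Corollary~\ref{cor2.4}, your proof essentially inlines that derivation; it is slightly longer but more self-contained, and it actually establishes a stronger intermediate fact (that $x_0$ is \emph{eventually} outside $A(t)$, not merely outside $A(t)$ at arbitrarily large times), either of which suffices for the contradiction. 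Both arguments are sound; the paper's is shorter given that Theorem~\ref{thm2.5} is already available, while yours exposes the mechanism of the contradiction directly.
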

\begin{proof}
For a contradiction, suppose that $B \not \subset A$.
Then, it follows from Theorem~\ref{thm2.5} that
for some $x \in B$ and for all $t \in \mathbb{R}$,
there exists $t_1 \ge t$ such that we have $x \in A(t_1)^c$.
But this contradicts the assumption that
for all $x \in B$ there exists
$t_0 \in \mathbb{R}$ such that 
for all $t \ge t_0$ we have $x \in A(t)$.
\end{proof}

\begin{thm}
Let $A(t)$ and $B(t)$ be set-valued functions that converge to the limits $A$ and $B$,
respectively, as $t \to \infty$.
If $A(t) \subset B(t)$ for all $t \in \mathbb{R}$,
then $A \subset B$.
\end{thm}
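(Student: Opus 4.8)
The plan is to derive the inclusion $A \subset B$ by feeding the ``eventual membership'' characterization of Theorem~\ref{thm2.3} into the containment criterion of Theorem~\ref{cor2.6}. The bridge between the two set-valued functions is the pointwise hypothesis $A(t) \subset B(t)$, which lets me transport membership facts about $A(t)$ into membership facts about $B(t)$.

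First I would apply Theorem~\ref{thm2.3} to $A(t)$, which converges to $A$: for every $x \in A$ there exists $t_0 \in \mathbb{R}$ such that $x \in A(t)$ for all $t \ge t_0$. Since $A(t) \subset B(t)$ for every $t \in \mathbb{R}$, the inclusion gives $x \in A(t) \subset B(t)$, so in fact $x \in B(t)$ for all $t \ge t_0$. This upgrades the statement to: for all $x \in A$ there exists $t_0 \in \mathbb{R}$ such that for all $t \ge t_0$ we have $x \in B(t)$.

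The second step is to recognize that this is exactly the hypothesis of Theorem~\ref{cor2.6}, applied not to $A(t)$ but to $B(t)$. Concretely, I would invoke Theorem~\ref{cor2.6} with the convergent set-valued function taken to be $B(t)$ (whose limit is $B$) and with the test set taken to be $A$. The conclusion of that theorem then reads $A \subset B$, which is precisely what is wanted, completing the argument.

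The only point requiring care is the renaming: Theorem~\ref{cor2.6} is stated with generic symbols (a function $A(t)$ with limit $A$ and a test set $B$), so I must be explicit that I am substituting $B(t)$ and $B$ for the convergent function and its limit, and $A$ for the test set. There is no genuine obstacle beyond this bookkeeping, since both ingredients are already established earlier in the excerpt and the pointwise inclusion hypothesis is used in its most direct form.
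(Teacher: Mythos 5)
Your proof is correct, and it is in fact cleaner than the paper's own argument, even though it uses exactly the same two ingredients. The paper also starts from Theorem~\ref{thm2.3} applied to $A(t)$ and transfers eventual membership to $B(t)$ via the hypothesis $A(t) \subset B(t)$, but then it proceeds by contradiction: it assumes $A \not\subset B$, restricts the eventual-membership property to the nonempty set $A \cap B^c$, applies Theorem~\ref{cor2.6} to $B(t)$ with test set $A \cap B^c$ to get $A \cap B^c \subset B$, and derives a contradiction from the existence of an element lying in both $B$ and $B^c$. You instead apply Theorem~\ref{cor2.6} with test set $A$ directly, which yields $A \subset B$ in one step with no contradiction and no nonemptiness bookkeeping. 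Nothing in Theorem~\ref{cor2.6} prevents this direct use --- its hypothesis is exactly the statement you established --- so your route is a genuine simplification: same lemmas, shorter logical path. Your closing remark about the renaming of symbols when invoking Theorem~\ref{cor2.6} is the right point of care, and you handle it correctly.
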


\begin{proof}
For a contradiction, suppose that $A \not\subset B$.
Since $A(t)$ converges to $A$ as $t \to \infty$,
we find from Theorem~\ref{thm2.3} that
for all $x \in A$ there exists $t_0 \in \mathbb{R}$
such that for all $t \ge t_0$ we have $x \in A(t)$.
Since $A(t)$ is a subset of $B(t)$ for all $t \in \mathbb{R}$,
we see that every element of $A$ belongs to $B(t)$ for all $t \ge t_0$.
Thus,  for all $x \in A$ there exists $t_0 \in \mathbb{R}$
such that for all $t \ge t_0$ we have $x \in B(t)$.
Since $A \not\subset B$, there exists an element $x$ such that
$x \in A$ and $x \in B^c$, that is, $x \in A \cap B^c$.
Hence, $A \cap B^c$ is not empty.
By the fact that $A \cap B^c$ is a subset of $A$, it holds that
for all $x \in  A \cap B^c$
there exists $t_0 \in \mathbb{R}$ such that for all $t \ge t_0$
we have $x \in B(t)$.
Since $B(t)$ converges to $B$ as $t \to \infty$,
$A \cap B^c \subset B$ by Theorem~\ref{cor2.6}.
As was mentioned, $A \cap B^c$ is not empty.
Thus, $A \cap B^c \subset B$ means that there exists an element $x$
such that $x \in B$ and $x \in B^c$, a contradiction.
Therefore, $A \subset B$ holds.
\end{proof}

\begin{thm} \label{thm2.16}
Let $A(t)$ be a set-valued function and $A$ a set.
If there exists $t_0 \in \mathbb{R}$ such that
$A(t) = A$ for all $t \ge t_0$,
then $A(t)$ converges to $A$ as $t \to \infty$.
\end{thm}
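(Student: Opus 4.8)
The plan is to verify Definition~\ref{def2.conver} directly, since the hypothesis forces the symmetric difference to vanish identically on a tail. First I would take an arbitrary point $t_i \in \mathbb{R}$ and an arbitrary element $x \in A(t_i) \triangle A$, exactly as the definition demands, and aim to produce a threshold $t_j \in \mathbb{R}$ beyond which $x \not\in A(t) \triangle A$.

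The key observation is that for every $t \ge t_0$ the hypothesis gives $A(t) = A$, so $A(t) \triangle A = A \triangle A = \emptyset$; here $A \triangle A = \{ A \cup A \} \cap \{ A \cap A \}^c = A \cap A^c = \emptyset$. Consequently $x \not\in A(t) \triangle A$ for every $t \ge t_0$, simply because the empty set contains no elements. Thus setting $t_j = t_0$ meets the requirement, and since $t_i$ and $x \in A(t_i) \triangle A$ were arbitrary, Definition~\ref{def2.conver} is satisfied and $A(t)$ converges to $A$ as $t \to \infty$.

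There is essentially no obstacle to speak of: the choice $t_j = t_0$ works uniformly, independently of both $t_i$ and $x$, so the only real content is the elementary identity $A \triangle A = \emptyset$. As an alternative route I could instead invoke Theorem~\ref{thm2.8}: for $t \ge t_0$ we have $\{ A(t) \triangle A \}^c = \emptyset^c = X$, so every $x \in X$ lies in $\{ A(t) \triangle A \}^c$ for all $t \ge t_0$, which is precisely the necessary and sufficient condition for convergence stated there. Either approach collapses to the same one-line computation, so I would present the direct verification of Definition~\ref{def2.conver} as the cleaner of the two.
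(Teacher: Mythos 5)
Your proposal is correct, and in fact it contains the paper's proof as its "alternative route." The paper proves this theorem by invoking Theorem~\ref{thm2.8}: since $A(t) \triangle A = \emptyset$ for all $t \ge t_0$, it observes that $\{A(t) \triangle A\}^c = X$ for all $t \ge t_0$, so every $x \in X$ satisfies the condition of Theorem~\ref{thm2.8}, which yields convergence. That is word-for-word the second route you sketch. Your preferred route --- verifying Definition~\ref{def2.conver} directly by taking $t_j = t_0$ uniformly in $t_i$ and $x$ --- is marginally more elementary, since it bypasses Theorem~\ref{thm2.8} altogether and rests only on the identity $A \triangle A = \emptyset$; what it gives up is consistency with the paper's organizational choice of funneling the later theorems of Subsection~\ref{sec2.thm} through the characterization in Theorem~\ref{thm2.8}. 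Either way the mathematical content is the same one-line computation, and both arguments are valid; your observation that the threshold $t_j = t_0$ works independently of $t_i$ and $x$ (so the definition's quantifier structure is satisfied trivially, including vacuously when $A(t_i) \triangle A$ is empty) is exactly right.
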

\begin{proof}
Since $A(t) \triangle A = \emptyset$ for all $t \ge t_0$,
we have $\{ A(t) \triangle A \}^c = X$ for all $t \ge t_0$.
Thus, if we take an arbitrary element $x$ from the universe $X$,
then $x \in \{ A(t) \triangle A \}^c$ for all $t \ge t_0$.
We conclude, by Theorem~\ref{thm2.8}, that $A(t)$ converges to $A$ as $t \to \infty$.
\end{proof}

\begin{thm} \label{thm2.17}
Let $A(t)$ and $B(t)$ be set-valued functions that converge to the limits $A$ and $B$,
respectively, as $t \to \infty$, and let $C$ be a set.
If there exists $t_0 \in \mathbb{R}$ such that $A(t) \cup B(t) = C$ for all $t \ge t_0$,
then $A \cup B = C$.
\end{thm}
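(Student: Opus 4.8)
The plan is to recognize that the set-valued function $t \mapsto A(t) \cup B(t)$ is convergent in two different ways, and then to invoke the uniqueness of the limit. Concretely, I would treat $A(t) \cup B(t)$ as a single set-valued function and show it converges to two candidate limits, namely $A \cup B$ and $C$; by Theorem~\ref{thm2.uni} these must coincide, which is exactly the desired conclusion $A \cup B = C$.

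First I would apply Theorem~\ref{thm2.wakyo}. Since $A(t)$ converges to $A$ and $B(t)$ converges to $B$ as $t \to \infty$, part $(1)$ of that theorem gives
\begin{equation*}
\lim_{t \to \infty} A(t) \cup B(t) = A \cup B.
\end{equation*}
Next I would use the hypothesis together with Theorem~\ref{thm2.16}. By assumption there exists $t_0 \in \mathbb{R}$ such that $A(t) \cup B(t) = C$ for all $t \ge t_0$. Viewing $A(t) \cup B(t)$ as the set-valued function whose value is eventually the constant set $C$, Theorem~\ref{thm2.16} yields
\begin{equation*}
\lim_{t \to \infty} A(t) \cup B(t) = C.
\end{equation*}

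Finally, the set-valued function $A(t) \cup B(t)$ now converges to both $A \cup B$ and $C$ as $t \to \infty$, so Theorem~\ref{thm2.uni} forces $A \cup B = C$, completing the proof.

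I do not expect any genuine obstacle here: the result is a direct composition of three theorems already established in this subsection. The only point requiring a little care is the conceptual step of regarding $A(t) \cup B(t)$ as a single convergent set-valued function, so that both the union theorem and the eventually-constant theorem apply to the \emph{same} object and the uniqueness theorem can then be triggered. Once that viewpoint is in place, the argument is immediate.
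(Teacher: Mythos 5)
Your proof is correct and follows essentially the same route as the paper: both use Theorem~\ref{thm2.wakyo} to get $\lim_{t \to \infty} A(t) \cup B(t) = A \cup B$ and Theorem~\ref{thm2.16} to get $\lim_{t \to \infty} A(t) \cup B(t) = C$, then conclude $A \cup B = C$. The only difference is that you explicitly cite Theorem~\ref{thm2.uni} at the final step, where the paper simply says ``combining'' the two equations --- your version is in fact slightly more careful, since uniqueness of the limit is exactly what justifies that combination.
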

\begin{proof}
Since $\displaystyle \lim_{t \to \infty}A(t) =A$ and
$\displaystyle \lim_{t \to \infty}B(t) = B$, by Theorem~\ref{thm2.wakyo}
we get
\begin{equation} \label{eq2.45}
\lim_{t \to \infty}A(t) \cup B(t) = A \cup B.
\end{equation}
Since $A(t) \cup B(t) = C$ for all $t \ge t_0$,
by Theorem~\ref{thm2.16} we get
\begin{equation} \label{eq2.46}
\lim_{t \to \infty}A(t) \cup B(t) = C.
\end{equation}
Combining equation~\eqref{eq2.45} and~\eqref{eq2.46},
we get $A \cup B = C$.
\end{proof}

\begin{thm} \label{thm2.18}
Let $A(t)$ and $B(t)$ be set-valued functions that converge to the limits $A$ and $B$,
respectively, as $t \to \infty$, and let $C$ be a set.
If there exists $t_0 \in \mathbb{R}$ such that $A(t) \cap B(t) = C$ for all $t \ge t_0$,
then $A \cap B = C$.
\end{thm}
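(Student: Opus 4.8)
The plan is to mirror the proof of Theorem~\ref{thm2.17} almost verbatim, replacing union with intersection throughout. The two ingredients I would invoke are part~$(2)$ of Theorem~\ref{thm2.wakyo}, which handles the intersection of convergent set-valued functions, and Theorem~\ref{thm2.16}, which handles a set-valued function that is eventually constant. These two results each yield a limit for the same set-valued function $A(t) \cap B(t)$, and comparing the two limits will give the conclusion.

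Concretely, I would proceed in three short steps. First, since $\displaystyle \lim_{t \to \infty}A(t) = A$ and $\displaystyle \lim_{t \to \infty}B(t) = B$, part~$(2)$ of Theorem~\ref{thm2.wakyo} gives
\begin{equation*}
\lim_{t \to \infty}A(t) \cap B(t) = A \cap B.
\end{equation*}
Second, by hypothesis there exists $t_0 \in \mathbb{R}$ with $A(t) \cap B(t) = C$ for all $t \ge t_0$, so treating $A(t) \cap B(t)$ as a single set-valued function and applying Theorem~\ref{thm2.16} gives
\begin{equation*}
\lim_{t \to \infty}A(t) \cap B(t) = C.
\end{equation*}
Third, I would combine these two displayed equalities to conclude $A \cap B = C$. (If one prefers to be explicit, the two equalities exhibit both $A \cap B$ and $C$ as limits of the same set-valued function $A(t) \cap B(t)$ at infinity, and Theorem~\ref{thm2.uni} forces them to coincide; but since both right-hand sides are literally limits of the identical left-hand side, the identification is immediate.)

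I do not anticipate any real obstacle here, as the argument is a direct transcription of Theorem~\ref{thm2.17} using the intersection clauses of Theorems~\ref{thm2.wakyo} and~\ref{thm2.16} in place of the union clauses. The only point deserving a moment's care is the second step: applying Theorem~\ref{thm2.16} requires recognizing $A(t) \cap B(t)$ as a single set-valued function that is eventually equal to the fixed set $C$, which is precisely what the hypothesis $A(t) \cap B(t) = C$ for all $t \ge t_0$ supplies. With that observation in hand, the proof is complete.
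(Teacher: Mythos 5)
Your proposal is correct and is exactly the argument the paper intends: the paper's proof of Theorem~\ref{thm2.18} simply says it is similar to that of Theorem~\ref{thm2.17}, which is the union version of your three steps (Theorem~\ref{thm2.wakyo}, Theorem~\ref{thm2.16}, then combining the two limits). Your explicit mention of Theorem~\ref{thm2.uni} to justify the final identification is a welcome clarification of the step the paper leaves implicit in the word ``Combining.''
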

\begin{proof}
The proof of Theorem~\ref{thm2.18} is similar to the proof
of Theorem~\ref{thm2.17}.
\end{proof}

\section{Convergence of Set-Valued Functions at a Point}
\label{sec03}

Section~\ref{sec03} discusses the convergence of set-valued functions at a point.
Similarly to Section~\ref{sec02}, we define it in Subsection~\ref{sec3.def},
give examples in Subsection~\ref{sec3.exa},
and then move to theorems in Subsection~\ref{sec3.thm}.
We formulate the convergence at a point on the basis of the same idea
as the convergence at infinity. Thus, we can derive similar theorems
to those of Subsection~\ref{sec2.thm} and prove them in almost all the same way.

\subsection{Definition}
\label{sec3.def}

We begin with a definition.
\begin{defi} \label{def3.conver}
Let $A(t)$ be a set-valued function that depends on the variable $t \in \mathbb{R}$,
and let $A$ be a set. We say that $A(t)$ converges to $A$ as $t \to t_0$ if
for all $t_i \in \mathbb{R}\,\, (t_i \neq t_0)$ and for all $x \in A(t_i) \triangle A$,
there exists $\delta >0$ such that for all $t \in \{ s
\in \mathbb{R} \bigm| 0< |s - t_0| < \delta \}$
we have  $x \not\in A(t) \triangle A$,
and we write $\displaystyle \lim_{t \to t_0} A(t) = A$.
Here, we call the set $A$ \textit{limit} of $A(t)$ at $t_0$.
\end{defi}

Definition~\ref{def3.conver} is based on the same intuitive idea as
Definition~\ref{def2.conver} of Section~\ref{sec02}.
Suppose that a set-valued function $A(t)$ converges to limit $A$ as $t \to t_0$.
The convergence of $A(t)$ to $A$ at $t_0$ means that
$A(t) \triangle A$, the non-common part of $A(t)$ and $A$,
gets ``smaller" and ``closer" to $\emptyset$ as $t$ gets closer to $t_0$.
Hence, if we take an arbitrary point $t_i$, all of the elements of
$A(t_i) \triangle A$ do not belong to $A(t) \triangle A$ for all $t$
sufficiently close to $t_0$.

We can adapt Definition~\ref{def3.conver} to one-sided convergence
at a point:

\begin{defi} \label{def3.left}
Let $A(t)$ be a set-valued function that depends on the variable $t \in \mathbb{R}$,
and let $A$ be a set. We say that $A(t)$ converges to $A$ as $t \to t_0$
from the left if
for all $t_i < t_0$ and for all $x \in A(t_i) \triangle A$,
there exists $\delta >0$ such that for all $t \in \{ s \in \mathbb{R}
\bigm| 0< t_0 - s < \delta \}$
we have  $x \not\in A(t) \triangle A$,
and we write $\displaystyle \lim_{t \to {t_0 - 0}}$A(t) = A.
Here, we call the set $A$ \textit{left-hand limit} of $A(t)$ at $t_0$.
\end{defi}

\begin{defi} \label{def3.right}
Let $A(t)$ be a set-valued function that depends on the variable $t \in \mathbb{R}$,
and let $A$ be a set. We say that $A(t)$ converges to $A$ as $t \to t_0$
from the right if
for all $t_i > t_0$ and for all $x \in A(t_i) \triangle A$,
there exists $\delta >0$ such that for all $t \in \{ s \in \mathbb{R}
\bigm| 0< s - t_0 < \delta \}$
we have  $x \not\in A(t) \triangle A$,
and we write $\displaystyle \lim_{t \to {t_0 + 0}}$A(t) = A.
Here, we call the set $A$ \textit{right-hand limit} of $A(t)$ at $t_0$.
\end{defi}

We close this subsection with a remark.
\paragraph{Remark}
The set-valued function $A(t)$ can converge
to $A$ as $t \to t_0$ even in the case $A(t_0)$ is not equal to $A$.
Thus, the set $A(t_0)$ is irrelevant to whether $A(t)$ converges
to $A$ as $t \to t_0$. 
Contrary, the set $A(t_0)$ is relevant to whether
$A(t)$ is ``continuous" at $t_0$, as we shall see in Section~\ref{sec04}.

\subsection{Examples}
\label{sec3.exa}

Subsection~\ref{sec3.exa} gives examples of the convergence
of set-valued functions at a point.
One of the implications in this subsection is that
the left-hand limit seldom coincides with the right-hand one in
a Euclidean space.

First, consider the following two set-valued functions. We set them
defined for $t>0$.
\begin{gather*}
A(t) = \{ (x , y ) \in \mathbb{R}^2 \bigm|
x^2 + y^2 < t^2 \}. \\
B(t) = \{ (x , y ) \in \mathbb{R}^2 \bigm|
x^2 + y^2 \le t^2 \}.
\end{gather*}
Second, consider the following two sets.
\begin{gather*}
A = \{ (x , y ) \in \mathbb{R}^2 \bigm|
x^2 + y^2 < 1 \}. \\
B = \{ (x , y ) \in \mathbb{R}^2 \bigm|
x^2 + y^2 \le 1 \}.
\end{gather*}
The set-valued function $A(t)$ is open for all $t > 0$;
$B(t)$ closed for all $t > 0$.
The set $A$ is open;
$B$ closed.

We show that $A(t)$ converges to $A$ as $t \to 1$
from the left.
Figure~\ref{sec3_2_exA} illustrates $A(t)$ and $A$ in case $0< t <1$.
Take an arbitrary point $0 < t_i < 1$
and choose an arbitrary element $(x_i,y_i)$ from $A(t_i) \triangle A$.
Then, if we take $\delta > 0$ such that
$\delta = 1- \sqrt{\smash[b]{{x_i}^2+{y_i}^2}}$,
we have $(x_i,y_i) \not\in A(t) \triangle A$
for all $0 < 1-t < \delta$, that is, for all
$\sqrt{\smash[b]{{x_i}^2+{y_i}^2}} < t < 1$.
\footnote{
To show the convergence of $A(t)$ to $A$,
we need to take $\delta >0$ such that
$t$, the radius of $A(t)$, is larger than
$\sqrt{\smash[b]{{x_i}^2+{y_i}^2}}$,
the distance from the origin to the element $(x_i,y_i)$.
Thus, we obtain $\delta = 1- \sqrt{\smash[b]{{x_i}^2+{y_i}^2}}$
by the calculation below:
\begin{gather*}
\sqrt{\smash[b]{{x_i}^2+{y_i}^2}} < t < 1 \\
\Leftrightarrow 0 < 1-t < 1-\sqrt{\smash[b]{{x_i}^2+{y_i}^2}}.
\end{gather*}
Let $\delta = 1-\sqrt{\smash[b]{{x_i}^2+{y_i}^2}}$.
Then, the inequality above becomes
\begin{equation*}
0 < 1-t < \delta.
\end{equation*}
}
Since $A(t)$ and $A$ satisfy Definition~\ref{def3.left},
$A(t)$ converges to $A$ as $t \to 1$ from the left.

\begin{figure}
\centering
\input{sec3_2_exA.tex}
\caption{
The shaded part is $A(t_i) \triangle A$, which does not
contain the boundary of $A$. The element $(x_i,y_i)$
belongs to $A(t_i) \triangle A$.
}
\label{sec3_2_exA}
\end{figure}%

For the same reason, $B(t)$ converges to $A$ as $t \to \infty$
from the left. However,
$B(t)$ does not converge to $B$ as $t \to \infty$ from the left
because $B(t_i) \triangle B$ contains the boundary $\partial B$
for all $0 < t_i < 1$.
Since $\partial B \subset B(t) \triangle B$ for all $0 < t < 1$,
we find that some element of $B(t_i) \triangle B$ belongs to
$B(t) \triangle B$ for all $0 < t <1$.

We next show that $A(t)$ converges $B$ as $t \to 1$ from the right.
Figure~\ref{sec3_2_exB} illustrates $A(t)$ and $B$ in case $t > 1$.
Take an arbitrary point $t_i > 1$ and choose an
arbitrary element $(x_i,y_i)$ from $A(t_i) \triangle B$.
Then, if we take $\delta > 0$ such that
$\delta = \sqrt{\smash[b]{{x_i}^2+{y_i}^2}} - 1$,
we have $(x_i,y_i) \not\in A(t) \triangle B$
for all $0 < t-1 < \delta$, that is, for all
$1 < t < \sqrt{\smash[b]{{x_i}^2+{y_i}^2}} $.
\footnote{
To show the convergence of $A(t)$ to $B$,
we need to take $\delta >0$ such that
$t$, the radius of $A(t)$, is smaller than
$\sqrt{\smash[b]{{x_i}^2+{y_i}^2}}$,
the distance from the origin to the element $(x_i,y_i)$.
Thus, we obtain $\delta = \sqrt{\smash[b]{{x_i}^2+{y_i}^2}} - 1$
by the calculation below:
\begin{gather*}
1 < t < \sqrt{\smash[b]{{x_i}^2+{y_i}^2}} \\
\Leftrightarrow 0 < t-1 < \sqrt{\smash[b]{{x_i}^2+{y_i}^2}}- 1.
\end{gather*}
Let $\delta = \sqrt{\smash[b]{{x_i}^2+{y_i}^2}} - 1$.
Then, the inequality above becomes
\begin{equation*}
0 < t-1 < \delta.
\end{equation*}
}
Since $A(t)$ and $B$ satisfy Definition~\ref{def3.right},
$A(t)$ converges to $B$ as $t \to 1$ from the right.

\begin{figure}
\centering
\input{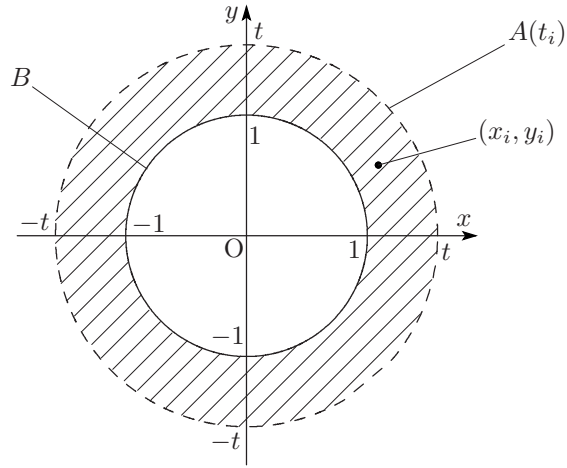}
\caption{
The shaded part is $A(t_i) \triangle A$,
which does not contain the boundary of $B$.
The element $(x_i,y_i)$
belongs to $A(t_i) \triangle B$.
}
\label{sec3_2_exB}
\end{figure}%

For the same reason, $B(t)$ converges to $B$ as $t \to 1$
from the right.
Here, $A(t)$ does not converge to $A$ as $t \to 1$ from the right
because $A(t_i) \triangle A$ contains the boundary
$\partial A$ for all $t_i >1$. Since $\partial A \subset
A(t) \triangle A$ for all $t > 1$, we find that some element
of $A(t_i) \triangle A$ belongs to $A(t) \triangle A$
for all $t > 1$.

As a conclusion, we can say that it is difficult to find ``both-handed"
limit at a point in a Euclidean space.
In the examples earlier, the left-hand limit does not coincide with
the right-hand one.
In a Euclidean space, the coincidence occurs at a point in a case
where a set-valued function holds stationary in a neighborhood of the point
(See Theorem~\ref{thm3.const} in Subsection~\ref{sec3.thm}).

Finally, we note there is a set-valued function that do not have even
one-sided limit at a point.
Now consider a point set below:
\begin{equation*}
C(t) = \{ t \},
\end{equation*}
where we define $C(t)$ on $\mathbb{R}$.
Let $C$ be a set such that $C = \{ 1 \}$.
Figure~\ref{sec3_2_exC} illustrates $C(t)$ and $C$.

We first show that $C(t)$ does not converge to $C$ as
$t \to 1$ from the left.
Take an arbitrary point $t_i <1$.
Then $C(t) \triangle C = \{ t_i,1 \}$.
If we choose an element $\{ 1 \}$ from $C(t) \triangle C$,
we have $\{ 1 \} \in C(t) \triangle C$ for all $t >1$.
Since $C(t)$ and $C$ does not satisfy Definition~\ref{def3.left},
$C(t)$ does not converge to $C$ as $t \to 1$ from the left.
Similarly, we can show that $C(t)$ does not converge to $C$
as $t \to 1$ from the right.

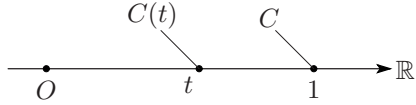
\begin{figure}
\centering
%WinTpicVersion4.28b
{\unitlength 0.1in
\begin{picture}( 20.3000,  5.0000)( 10.0000,-15.5000)
% VECTOR 2 0 3 0 Black White
% 2 1000 1400 3000 1400
% 
{\color[named]{Black}{%
\special{pn 8}%
\special{pa 1000 1400}%
\special{pa 3000 1400}%
\special{fp}%
\special{sh 1}%
\special{pa 3000 1400}%
\special{pa 2934 1380}%
\special{pa 2948 1400}%
\special{pa 2934 1420}%
\special{pa 3000 1400}%
\special{fp}%
}}%
% DOT 0 0 3 0 Black White
% 1 1200 1400
% 
{\color[named]{Black}{%
\special{pn 4}%
\special{sh 1}%
\special{ar 1200 1400 16 16 0  6.28318530717959E+0000}%
}}%
% DOT 0 0 3 0 Black White
% 2 2000 1400 2600 1400
% 
{\color[named]{Black}{%
\special{pn 4}%
\special{sh 1}%
\special{ar 2000 1400 16 16 0  6.28318530717959E+0000}%
\special{sh 1}%
\special{ar 2600 1400 16 16 0  6.28318530717959E+0000}%
}}%
% STR 2 0 3 0 Black White
% 4 2510 1300 2510 1400 2 0 0 0
% 
\put(25.1000,-14.0000){\makebox(0,0)[lb]{}}%
% STR 2 0 3 0 Black White
% 4 2510 1300 2510 1400 2 0 0 0
% 
\put(25.1000,-14.0000){\makebox(0,0)[lb]{}}%
% STR 2 0 3 0 Black White
% 4 1620 1450 1620 1550 2 0 0 0
% 
\put(16.2000,-15.5000){\makebox(0,0)[lb]{}}%
% STR 2 0 3 0 Black White
% 4 1920 1340 1920 1440 1 0 0 0
% $t$
\put(19.2000,-14.4000){\makebox(0,0)[lt]{$t$}}%
% STR 2 0 3 0 Black White
% 4 2600 1410 2600 1510 5 0 0 0
% $1$
\put(26.0000,-15.1000){\makebox(0,0){$1$}}%
% STR 2 0 3 0 Black White
% 4 1200 1410 1200 1510 5 0 0 0
% $O$
\put(12.0000,-15.1000){\makebox(0,0){$O$}}%
% LINE 3 0 3 0 Black White
% 2 2000 1400 1800 1200
% 
{\color[named]{Black}{%
\special{pn 4}%
\special{pa 2000 1400}%
\special{pa 1800 1200}%
\special{fp}%
}}%
% STR 2 0 3 0 Black White
% 4 1890 1090 1890 1190 3 0 0 0
% $C(t)$
\put(18.9000,-11.9000){\makebox(0,0)[rb]{$C(t)$}}%
% LINE 3 0 3 0 Black White
% 2 2600 1400 2400 1200
% 
{\color[named]{Black}{%
\special{pn 4}%
\special{pa 2600 1400}%
\special{pa 2400 1200}%
\special{fp}%
}}%
% STR 2 0 3 0 Black White
% 4 2420 1080 2420 1180 3 0 0 0
% $C$
\put(24.2000,-11.8000){\makebox(0,0)[rb]{$C$}}%
% STR 2 0 3 0 Black White
% 4 3030 1260 3030 1360 1 0 0 0
% $\mathbb{R}$
\put(30.3000,-13.6000){\makebox(0,0)[lt]{$\mathbb{R}$}}%
\end{picture}}%
\caption{In both cases $t \to 1$ from the left and from the right,
$C(t) \triangle C = \{ t,1 \}$. The element $\{ 1 \}$
belongs to $C(t) \triangle C$ for all $t<1$ and for all $t >1$.}
\label{sec3_2_exC}
\end{figure}%

We need to consider a real number and a set separately.
If a real variable $t$ approaches $1$, we can regard the variable $t$ as
convergent to the number $1$ though $t \neq 1$.
But, we cannot regard the set-valued function $\{ t \}$ as convergent to
the set $\{ 1 \}$ because $t \neq 1$.
On account of the dense property of real numbers,
we can consider the variable $t$ the same as the number $1$ if $t$ approaches $1$.
However, we cannot consider the set-valued function $\{ t \}$ the same as the set $\{ 1 \}$
because $\{ t \}$ is completely different from $\{ 1 \}$ as an element.

\subsection{Theorems}
\label{sec3.thm}

In Sebsection~\ref{sec3.thm}, we discuss theorems on Definition~\ref{def3.conver}.
Definition~\ref{def2.conver} and Definition~\ref{def3.conver} are based on
the same intuitive idea and constructed in a similar way.
So, adapting theorems on Definition~\ref{def2.conver} to Definition~\ref{def3.conver},
we can obtain many theorems similar to those of Subsection~\ref{sec2.thm}.
Furthermore, we can prove them in almost all the same way
as Subsection~\ref{sec2.thm}. Thus, in Subsection~\ref{sec3.thm} we only
list theorems on Definition~\ref{def3.conver} and do not prove them.
For simplicity, set-valued functions are defined on $\mathbb{R}$.
\footnote{
In the preceding subsection, we illustrated that it is difficult to find
both-handed limit at a point in a Euclidean space. Thus, the listed theorems might
be unmeaningful themselves. However, we can easily adapt the theorems to
one-handed limit at a point. This means we can practically apply the theorems
by the adaption because it is not difficult to find one-handed limit in a Euclidean space.
Considering convenience, we list the theorems of both-handed limit, not
one-handed limit.
}

\begin{thm} 
Let $A(t)$ be a set-valued function and $A(t)^c$ the complement
of $A(t)$ for each $t \in \mathbb{R}$. Let $A$ be a set and $A^c$ the complement
of $A$. Then, $A(t)$ converges to $A$ as $t \to t_0$ if and only if
$A(t)^c$ converges to $A^c$ as $t \to t_0$.
\end{thm}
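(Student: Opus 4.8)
The plan is to mirror, essentially verbatim, the proof of Theorem~\ref{thm2.start}, since this statement is the exact analog of that result with ``$t \to \infty$'' replaced by ``$t \to t_0$.'' The sole ingredient is property~(1) of symmetric difference recorded in the Settings, namely $A \triangle B = A^c \triangle B^c$. The decisive observation is that the passage to complements leaves the $\delta$-neighborhood structure of Definition~\ref{def3.conver} completely untouched: the quantifier over $t_i \neq t_0$, the existential $\delta > 0$, and the condition $0 < |t - t_0| < \delta$ all refer only to the variable $t$ and not to the sets themselves, so they are carried along unchanged when each symmetric difference is rewritten.

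First I would assume that $A(t)$ converges to $A$ as $t \to t_0$. By Definition~\ref{def3.conver}, for all $t_i \in \mathbb{R}$ with $t_i \neq t_0$ and for all $x \in A(t_i) \triangle A$, there exists $\delta > 0$ such that for all $t \in \{ s \in \mathbb{R} \bigm| 0 < |s - t_0| < \delta \}$ we have $x \not\in A(t) \triangle A$. Applying $A(t_i) \triangle A = A(t_i)^c \triangle A^c$ to the membership hypothesis and $A(t) \triangle A = A(t)^c \triangle A^c$ to the conclusion, I obtain: for all $t_i \neq t_0$ and for all $x \in A(t_i)^c \triangle A^c$, there exists $\delta > 0$ such that for all $t \in \{ s \in \mathbb{R} \bigm| 0 < |s - t_0| < \delta \}$ we have $x \not\in A(t)^c \triangle A^c$. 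By Definition~\ref{def3.conver} this is precisely the statement that $A(t)^c$ converges to $A^c$ as $t \to t_0$.

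The converse direction requires no new work. Since complementation is an involution, applying the identity $A \triangle B = A^c \triangle B^c$ once more (now reading $A(t)^c$ and $A^c$ as the ``base'' sets and recovering $A(t)$ and $A$ as their complements) turns the convergence of $A(t)^c$ to $A^c$ back into the convergence of $A(t)$ to $A$. I would simply state that the converse follows similarly, exactly as in the proof of Theorem~\ref{thm2.start}.

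There is no genuine obstacle here: the entire content is the symmetric-difference identity, and the only point demanding mild care is the purely notational one of confirming that neither the restriction $t_i \neq t_0$ nor the deleted neighborhood $\{ s \bigm| 0 < |s - t_0| < \delta \}$ is affected by replacing sets with their complements. Because the proof is a formal substitution identical in structure to the already-established infinity case, I expect it to be short and the verification routine.
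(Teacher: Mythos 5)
Your proof is correct and matches the paper's intended argument exactly: the paper does not even write out a separate proof for this theorem, stating in Subsection~\ref{sec3.thm} that all such results follow ``in almost all the same way'' as Subsection~\ref{sec2.thm}, and your argument is precisely the proof of Theorem~\ref{thm2.start} with the deleted $\delta$-neighborhood of $t_0$ playing the role of the tail $t \ge t_j$. Your explicit remark that complementation leaves the quantifier structure over $t$ untouched is the whole point, and the converse by involution is handled just as the paper handles it.
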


\begin{thm}
Let $A(t)$ be a set-valued function that converges to limit $A$ as $t \to t_0$.
Then, for all $t_i \in \mathbb{R} \, \,
(t_i \neq t_0)$ and for all $x \in \{ A(t_i) \triangle A \}^c$,
there exists $\delta >0$ such that for all $t \in \{ s \in
\mathbb{R} \bigm| 0< |s - t_0| < \delta \}$
we have  $x \in \{ A(t) \triangle A \}^c$.
\end{thm}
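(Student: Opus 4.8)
The plan is to mirror the proof of Theorem~\ref{thm2.2} almost verbatim, replacing the ``threshold'' formulation (``there exists $t_j$ such that for all $t \ge t_j$'') by the punctured-neighborhood formulation (``there exists $\delta > 0$ such that for all $t$ with $0 < |t - t_0| < \delta$'') supplied by Definition~\ref{def3.conver}. Since the hypothesis is that $A(t)$ converges to $A$ as $t \to t_0$, the only genuine change is bookkeeping: every quantification over $t$ must be restricted to $t \neq t_0$, because the value $A(t_0)$ is irrelevant to convergence at $t_0$ (as noted in the Remark following Definition~\ref{def3.right}).

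First I would fix an arbitrary point $t_i \in \mathbb{R}$ with $t_i \neq t_0$ and an arbitrary element $x \in \{ A(t_i) \triangle A \}^c$. I would then split into two exhaustive cases according to the behaviour of $x$ on the punctured line $\mathbb{R} \setminus \{ t_0 \}$: either (i) $x \in A(t_k) \triangle A$ for some $t_k \neq t_0$, or (ii) $x \in \{ A(t) \triangle A \}^c$ for every $t \neq t_0$. These two cases are mutually exclusive and jointly exhaustive, so no element is left unaccounted for.

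In case (i), I would apply Definition~\ref{def3.conver} directly to the point $t_k$ (which is legitimate since $t_k \neq t_0$) and to the element $x \in A(t_k) \triangle A$: convergence then yields a $\delta > 0$ such that $x \in \{ A(t) \triangle A \}^c$ for all $t$ with $0 < |t - t_0| < \delta$, which is exactly what is required. In case (ii), any positive $\delta$---say $\delta = 1$---works, because every $t$ satisfying $0 < |t - t_0| < \delta$ automatically has $t \neq t_0$, and by hypothesis such $t$ already gives $x \in \{ A(t) \triangle A \}^c$. Since in both cases the desired $\delta$ exists, and since $t_i$ and $x$ were chosen arbitrarily, the conclusion follows.

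I do not expect any real obstacle: the argument is a line-by-line transcription of Theorem~\ref{thm2.2}, and the paper has already flagged (in the opening of Subsection~\ref{sec3.thm}) that results on Definition~\ref{def2.conver} carry over to Definition~\ref{def3.conver} ``in almost all the same way.'' The only point demanding a moment's care is conducting the case split over $t \neq t_0$ rather than over all of $\mathbb{R}$; keeping the exclusion of $t_0$ consistent throughout guarantees that the $\delta$ produced in case (i) and the trivial $\delta$ chosen in case (ii) both meet the punctured-neighborhood requirement.
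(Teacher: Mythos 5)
Your proof is correct and follows exactly the route the paper intends: Subsection~\ref{sec3.thm} omits the proof precisely because it is the line-by-line transcription of Theorem~\ref{thm2.2} that you carry out, and your case split over the punctured line $\mathbb{R} \setminus \{t_0\}$ (with the trivial choice of $\delta$ in the stationary case) handles the one point of care correctly.
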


\begin{thm} \label{thm3.ns} %%%%%%%%%%%%%%%%%%%%
Let $A(t)$ be a set-valued function and $A$ a set. Then,
$A(t)$ converges to $A$ as $t \to t_0$ if and only if
for all $x \in X$, there exists
$\delta >0$ such that for all $t \in \{ s \in
\mathbb{R} \bigm| 0< |s - t_0| < \delta \}$
we have $x \in \{ A(t) \triangle A \}^c$.
\end{thm}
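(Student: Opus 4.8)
The plan is to mirror the proof of Theorem~\ref{thm2.8} almost verbatim, replacing the order relation ``$t \ge t_j$'' by membership in a punctured neighborhood ``$0 < |s - t_0| < \delta$'' and, crucially, replacing the operation $\max$ on thresholds by $\min$ on radii. As in the infinity case the argument splits into the two implications of the biconditional, and for the forward direction I would lean on the (unlabeled) theorem stated immediately above Theorem~\ref{thm3.ns}, which is the point-version analogue of Theorem~\ref{thm2.2}; I treat it as available.

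First I would prove the forward direction. Assume $\displaystyle\lim_{t \to t_0} A(t) = A$. By Definition~\ref{def3.conver}, for every $t_{i_1} \neq t_0$ and every $x \in A(t_{i_1}) \triangle A$ there is a $\delta_1 > 0$ with $x \in \{A(t) \triangle A\}^c$ for all $t$ satisfying $0 < |t - t_0| < \delta_1$. By the preceding theorem, for every $t_{i_2} \neq t_0$ and every $x \in \{A(t_{i_2}) \triangle A\}^c$ there is a $\delta_2 > 0$ yielding the same conclusion on the neighborhood of radius $\delta_2$. Fixing a common point $t_i$ for $t_{i_1}$ and $t_{i_2}$ and setting $\delta = \min\{\delta_1, \delta_2\}$, every element of $A(t_i) \triangle A$ and every element of $\{A(t_i) \triangle A\}^c$ lies in $\{A(t) \triangle A\}^c$ throughout $0 < |t - t_0| < \delta$. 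Since $\{A(t_i) \triangle A\} \cup \{A(t_i) \triangle A\}^c = X$, this gives the desired statement for all $x \in X$; and because $X$ does not depend on the choice of $t_i$, I can discard the quantifier ``for all $t_i \neq t_0$'', leaving exactly the right-hand condition.

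For the converse I would argue as in Theorem~\ref{thm2.8}: assuming the $X$-statement, and noting that $A(t_i) \triangle A \subset X$ for every $t_i \neq t_0$, each element of $A(t_i) \triangle A$ already satisfies the conclusion, which is precisely the hypothesis of Definition~\ref{def3.conver}; hence $A(t)$ converges to $A$ as $t \to t_0$.

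The only genuine point of care --- and the step I would flag --- is the passage from two radii to one. In Theorem~\ref{thm2.8} the two thresholds were combined by $t_0 = \max\{t_{j_1}, t_{j_2}\}$, since a larger threshold makes the tail $\{t \ge t_0\}$ smaller; here the corresponding sets are the punctured balls $\{0 < |t - t_0| < \delta\}$, which shrink as $\delta$ decreases, so the correct combination is $\delta = \min\{\delta_1, \delta_2\}$. Everything else is a routine transcription, so I expect no new obstacle beyond keeping the $t_i \neq t_0$ side-condition attached throughout and using $\min$ rather than $\max$ at the one place where the two cases are merged.
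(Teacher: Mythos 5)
Your proof is correct and is exactly the argument the paper intends: the paper does not write out a proof of Theorem~\ref{thm3.ns}, stating instead that the theorems of Subsection~\ref{sec3.thm} follow by adapting the proofs of Subsection~\ref{sec2.thm}, and your transcription of the proof of Theorem~\ref{thm2.8} (using the unlabeled point-analogue of Theorem~\ref{thm2.2}, splitting $X$ as $\{A(t_i)\triangle A\}\cup\{A(t_i)\triangle A\}^c$, and discarding the $t_i$ quantifier) is precisely that adaptation. You also correctly identified the one substantive change, replacing $\max$ of thresholds by $\min$ of radii, since punctured neighborhoods shrink as $\delta$ decreases.
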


\begin{thm} 
Let $A(t)$ be a set-valued function that converges to limit $A$ as $t \to t_0$.
Then, for all $x \in A$,
there exists $\delta >0$ such that for all $t \in \{ s
\in \mathbb{R} \bigm| 0< |s - t_0| < \delta \}$
we have $x \in A(t)$.
\end{thm}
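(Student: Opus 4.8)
The plan is to imitate the proof of Theorem~\ref{thm2.3} essentially line for line, with the necessary-and-sufficient criterion Theorem~\ref{thm3.ns} playing the role that Theorem~\ref{thm2.8} played there. The only structural change is that every occurrence of the phrase ``there exists $t_0 \in \mathbb{R}$ such that for all $t \ge t_0$'' is replaced by ``there exists $\delta > 0$ such that for all $t \in \{ s \in \mathbb{R} \bigm| 0 < |s - t_0| < \delta \}$.'' Thus the argument is genuinely a transcription of the infinity case, with the punctured neighborhood of $t_0$ substituted for the ray $[t_0, \infty)$.

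First I would invoke the hypothesis $\displaystyle \lim_{t \to t_0} A(t) = A$ together with Theorem~\ref{thm3.ns} to obtain: for all $x \in X$ there exists $\delta > 0$ such that $x \in \{ A(t) \triangle A \}^c$ for every $t$ with $0 < |t - t_0| < \delta$. Since $A \subset X$, the same conclusion holds in particular for every $x \in A$. Because such an $x$ is already an element of $A$, I may intersect with $A$ and record that $x \in \{ A(t) \triangle A \}^c \cap A$ on that punctured neighborhood.

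The algebraic heart is the inclusion $\{ A(t) \triangle A \}^c \cap A \subset A(t)$, which is identical to the computation appearing in Theorem~\ref{thm2.3}:
\begin{align*}
\{ A(t) \triangle A \}^c \cap A
&= \bigl[ \{ A(t) \cap A \} \cup \{ A(t)^c \cap A^c \} \bigr] \cap A \\
&= A(t) \cap A \\
&\subset A(t).
\end{align*}
Feeding this inclusion back into the previous step shows that each $x \in A$ lies in $A(t)$ throughout the punctured neighborhood $0 < |t - t_0| < \delta$, which is exactly the assertion of the theorem.

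I do not expect a genuine obstacle here: the set-theoretic identity is unchanged and Theorem~\ref{thm3.ns} supplies precisely the point-convergence analog of Theorem~\ref{thm2.8}. The only point demanding care is purely notational bookkeeping --- ensuring that the single $\delta$ produced for a given $x$ is carried consistently through the restriction to $A$, the intersection with $A$, and the final inclusion, so that the quantifier order ``for all $x \in A$, there exists $\delta > 0$, for all $t$ in the punctured $\delta$-neighborhood of $t_0$'' is preserved verbatim.
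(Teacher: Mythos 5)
Your proof is correct and is precisely the adaptation the paper intends: the paper omits proofs in Subsection~\ref{sec3.thm} on the grounds that they proceed ``in almost all the same way'' as Subsection~\ref{sec2.thm}, and your argument reproduces the proof of Theorem~\ref{thm2.3} verbatim with Theorem~\ref{thm3.ns} substituted for Theorem~\ref{thm2.8} and the punctured $\delta$-neighborhood of $t_0$ substituted for the ray $t \ge t_0$. The set-theoretic inclusion $\{ A(t) \triangle A \}^c \cap A = A(t) \cap A \subset A(t)$ and the quantifier bookkeeping are handled exactly as in the paper.
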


\begin{cor}
Let $A(t)$ be a set-valued function that converges to $A$ as $t \to t_0$.
Then, for all $x \in A^c$,
there exists $\delta >0$ such that for all $t \in \{ s
\in \mathbb{R} \bigm| 0< |s - t_0| < \delta \}$
we have $x \in A(t)^c$.
\end{cor}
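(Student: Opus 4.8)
The plan is to mirror exactly the proof of Corollary~\ref{cor2.4}, replacing the limit at infinity by the limit at $t_0$ throughout. The argument rests on two facts already established in this subsection: the complement theorem for convergence at a point (the point-version of Theorem~\ref{thm2.start}), which asserts that $A(t)$ converges to $A$ as $t \to t_0$ if and only if $A(t)^c$ converges to $A^c$ as $t \to t_0$; and the theorem immediately preceding this corollary (the point-version of Theorem~\ref{thm2.3}), which asserts that if a set-valued function converges to a limit as $t \to t_0$, then every element of that limit belongs to the set-valued function for all $t$ sufficiently close to, but different from, $t_0$.

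First I would invoke the hypothesis that $A(t)$ converges to $A$ as $t \to t_0$ and apply the complement theorem to deduce that $A(t)^c$ converges to $A^c$ as $t \to t_0$. Second, I would apply the preceding theorem to the convergent set-valued function $A(t)^c$ with limit $A^c$. This yields that for all $x \in A^c$ there exists $\delta > 0$ such that for all $t \in \{ s \in \mathbb{R} \bigm| 0 < |s - t_0| < \delta \}$ we have $x \in A(t)^c$, which is precisely the assertion of the corollary.

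The only point requiring care --- and it is the sole place where a misstep could occur --- is the bookkeeping of the substitution: the preceding theorem must be applied with $A(t)^c$ playing the role of the convergent set-valued function and $A^c$ playing the role of its limit, so that the conclusion is phrased in terms of membership in $A(t)^c$ rather than $A(t)$. No genuine obstacle arises here, since both auxiliary theorems are already available and the punctured-neighborhood condition $0 < |s - t_0| < \delta$ appears verbatim in the hypothesis of the preceding theorem and in the desired conclusion. The corollary is therefore an immediate transcription of Corollary~\ref{cor2.4} to the setting of convergence at a point, and the proof consists of these two invocations in sequence.
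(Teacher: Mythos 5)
Your proposal is correct and is exactly the argument the paper intends: the paper omits proofs in Subsection~\ref{sec3.thm}, stating they proceed ``in almost all the same way'' as Subsection~\ref{sec2.thm}, and your two-step argument (complement theorem for convergence at a point, followed by the preceding theorem applied to $A(t)^c$ with limit $A^c$) is the verbatim transcription of the paper's proof of Corollary~\ref{cor2.4}, which invokes Theorem~\ref{thm2.start} and then Theorem~\ref{thm2.3}. Both auxiliary results are indeed listed in Subsection~\ref{sec3.thm}, so no gap remains.
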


\begin{thm}  
Let $A(t)$ be a set-valued function that converges to limit $A$ as $t \to
t_0$, and let $B$ be a set with $B \not\subset A$.
Then, for some $x \in B$ and for all $\delta >0$,
there exists $t_j \in
\{ s \in \mathbb{R} \bigm| 0< |s - t_0| < \delta \}$ such that we have $x \in A(t_j)^c$.
\end{thm}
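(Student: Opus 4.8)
The plan is to mirror the proof of Theorem~\ref{thm2.5}, replacing the role of Corollary~\ref{cor2.4} by its point-analog (the corollary immediately preceding this statement, which is the point version of Corollary~\ref{cor2.4}) and adjusting the final quantifier step from the ``at infinity'' form to the ``at a point'' form. First I would invoke the preceding corollary: since $A(t)$ converges to $A$ as $t \to t_0$, for every $x \in A^c$ there exists $\delta > 0$ such that $x \in A(t)^c$ for all $t$ in the punctured neighborhood $\{ s \in \mathbb{R} \bigm| 0 < |s - t_0| < \delta \}$.

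Next I would restrict attention to $A^c \cap B$. Since $A^c \cap B \subset A^c$, the statement above applies to each of its elements, so for all $x \in A^c \cap B$ there exists $\delta > 0$ with $x \in A(t)^c$ throughout the corresponding punctured $\delta$-neighborhood. The hypothesis $B \not\subset A$ guarantees the existence of an element lying in both $B$ and $A^c$, hence $A^c \cap B \neq \emptyset$. Fixing one such element $x \in B$ together with its associated radius, call it $\delta_0$, I obtain: for some $x \in B$ there exists $\delta_0 > 0$ such that $x \in A(t)^c$ for all $t$ with $0 < |t - t_0| < \delta_0$.

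It then remains to convert this into the desired conclusion. Given an arbitrary $\delta > 0$, I would set $\delta' = \min\{ \delta, \delta_0 \}$; since $\delta' > 0$, the punctured neighborhood $\{ s \in \mathbb{R} \bigm| 0 < |s - t_0| < \delta' \}$ is nonempty, so one may choose a point $t_j$ in it. Then $0 < |t_j - t_0| < \delta_0$ forces $x \in A(t_j)^c$, while $0 < |t_j - t_0| < \delta$ places $t_j$ in $\{ s \in \mathbb{R} \bigm| 0 < |s - t_0| < \delta \}$. As $\delta > 0$ was arbitrary, this yields precisely the statement that for some $x \in B$ and for all $\delta > 0$ there exists $t_j$ in the punctured $\delta$-neighborhood with $x \in A(t_j)^c$.

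The one genuinely new point, and the main thing to watch, is this last step: in Theorem~\ref{thm2.5} the passage from ``for all $t \ge t_1$'' to ``for all $t$, there exists $t_0 \ge t$'' is handled by taking a maximum, whereas here the ``there exists $\delta_0$, for all $t$ in the punctured ball'' statement must be turned into a ``for all $\delta$, there exists $t_j$ in the punctured ball'' statement. This conversion relies on the nonemptiness of every punctured neighborhood of $t_0$, which is exactly where the density of the real line enters; everything else is a direct transcription of the proof of Theorem~\ref{thm2.5}.
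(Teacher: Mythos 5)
Your proof is correct and is precisely the adaptation the paper intends: the paper omits proofs in Subsection~\ref{sec3.thm} on the grounds that they follow the arguments of Subsection~\ref{sec2.thm}, and your argument mirrors the proof of Theorem~\ref{thm2.5} step by step, using the point-version of Corollary~\ref{cor2.4}, the nonemptiness of $A^c \cap B$ from $B \not\subset A$, and the nonemptiness of punctured real neighborhoods (via $\delta' = \min\{\delta, \delta_0\}$) for the final quantifier conversion. Nothing is missing.
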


\begin{thm}
Let $A(t)$ be a set-valued function that converges to limits $A$ and $B$ as $t \to
t_0$. Then, $A = B$.
\end{thm}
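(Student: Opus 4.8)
The plan is to mirror the proof of Theorem~\ref{thm2.uni}, the uniqueness of the limit at infinity, which argues by contradiction using a ``negation'' theorem together with an ``inclusion'' theorem. First I would suppose, for a contradiction, that $A \neq B$; equivalently, that $A \not\subset B$ or $B \not\subset A$. By symmetry it suffices to treat the case $A \not\subset B$, the case $B \not\subset A$ being identical with the roles of $A$ and $B$ interchanged.

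In the case $A \not\subset B$ I would exploit that $A(t)$ converges to $B$ as $t \to t_0$. Applying the point-analogue of Theorem~\ref{thm2.5} (the penultimate theorem listed above), with the set playing the role of the subset-witness being $A$ itself and using $A \not\subset B$, I obtain: for some $x \in A$ and for all $\delta > 0$, there exists $t_j \in \{ s \in \mathbb{R} \mid 0 < |s - t_0| < \delta \}$ such that $x \in A(t_j)^c$. On the other hand, $A(t)$ also converges to $A$ as $t \to t_0$, so the point-analogue of Theorem~\ref{thm2.3} yields: for all $x \in A$, there exists $\delta > 0$ such that for all $t \in \{ s \in \mathbb{R} \mid 0 < |s - t_0| < \delta \}$ we have $x \in A(t)$. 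These two conclusions are incompatible for the particular $x \in A$ furnished by the first statement, and that is the contradiction sought.

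The crux, and the only place demanding care, is checking that these two statements genuinely negate one another at the level of quantifiers. The first asserts ``$\forall \delta > 0\;\exists t_j$ in the punctured neighborhood with $x \in A(t_j)^c$''; the second, specialized to that same $x$, asserts ``$\exists \delta > 0\;\forall t$ in the punctured neighborhood with $x \in A(t)$'', that is, with $x \notin A(t)^c$. Since ``$\forall \delta\,\exists t\,(x \in A(t)^c)$'' is precisely the logical negation of ``$\exists \delta\,\forall t\,(x \notin A(t)^c)$'' over the same family of punctured $\delta$-neighborhoods of $t_0$, the incompatibility is immediate and no estimate on $\delta$ is required.

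Having derived a contradiction in the case $A \not\subset B$, and by symmetry also in the case $B \not\subset A$, I conclude that $A \subset B$ and $B \subset A$, hence $A = B$, completing the proof. I do not expect any genuine obstacle beyond the quantifier bookkeeping described above, since the argument transcribes the infinity version of Theorem~\ref{thm2.uni} almost verbatim, replacing the tails ``$t \ge t_j$'' by the punctured neighborhoods ``$0 < |s - t_0| < \delta$''.
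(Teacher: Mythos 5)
Your proposal is correct and follows exactly the route the paper intends: it transcribes the proof of Theorem~\ref{thm2.uni} to the point case, deriving a contradiction from the point-analogues of Theorem~\ref{thm2.5} (applied with limit $B$ and witness set $A$ when $A \not\subset B$) and Theorem~\ref{thm2.3}, with the tails $t \ge t_j$ replaced by punctured $\delta$-neighborhoods of $t_0$; your quantifier check is the only nontrivial step and you handle it correctly.
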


\begin{thm} 
Let $A(t)$ and $B(t)$ be set-valued functions that converge to the limits $A$ and $B$,
respectively, as $t \to t_0$. Then, the following hold:
\begin{enumerate}[$(1)$]
\item $\displaystyle \lim_{t \to t_0}A(t) \cup B(t) = A \cup B$.
\item $\displaystyle \lim_{t \to t_0}A(t) \cap B(t) = A \cap B$.
\end{enumerate}
\end{thm}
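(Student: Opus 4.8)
The plan is to mirror the proof of Theorem~\ref{thm2.wakyo} essentially verbatim, replacing the infinity criterion Theorem~\ref{thm2.8} by its point analogue Theorem~\ref{thm3.ns}, and replacing the limit tails ``$t \ge t_0$'' by punctured neighborhoods ``$0 < |t - t_0| < \delta$.'' The whole argument is structurally identical, so I expect no genuine obstacle; the only substantive differences are bookkeeping ones, which I flag below.

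For part $(1)$, I would invoke Theorem~\ref{thm3.ns} twice. From $\lim_{t \to t_0} A(t) = A$ I get that for every $x \in X$ there is $\delta_1 > 0$ with $x \in \{ A(t) \triangle A \}^c$ for all $t$ with $0 < |t - t_0| < \delta_1$, and likewise a $\delta_2$ for $B(t)$. The one place where care is needed is combining the two radii: unlike the $t \to \infty$ case, where one takes $t_0 = \max \{ t_1, t_2 \}$, here I must set $\delta = \min \{ \delta_1, \delta_2 \}$, so that both membership statements hold simultaneously on the punctured $\delta$-neighborhood. Then every $x \in X$ lies in $\{ A(t) \triangle A \}^c \cap \{ B(t) \triangle B \}^c = \bigl[ \{ A(t) \triangle A \} \cup \{ B(t) \triangle B \} \bigr]^c$ there.

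Next I would use property $(3)$ of the symmetric difference from the Settings, namely $\{ A(t) \cup B(t) \} \triangle \{ A \cup B \} \subset \{ A(t) \triangle A \} \cup \{ B(t) \triangle B \}$, and take complements to obtain the reverse inclusion $\bigl[ \{ A(t) \cup B(t) \} \triangle \{ A \cup B \} \bigr]^c \supset \bigl[ \{ A(t) \triangle A \} \cup \{ B(t) \triangle B \} \bigr]^c$. Since the smaller set already contains every point of $X$ on the punctured $\delta$-neighborhood, so does the larger one; applying the ``if'' direction of Theorem~\ref{thm3.ns} then gives $\lim_{t \to t_0} A(t) \cup B(t) = A \cup B$.

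For part $(2)$ I would reduce to part $(1)$ by complementation, exactly as in Theorem~\ref{thm2.wakyo}. Using the complement theorem for convergence at a point (the analogue of Theorem~\ref{thm2.start}), I have $\lim_{t \to t_0} A(t)^c = A^c$ and $\lim_{t \to t_0} B(t)^c = B^c$; part $(1)$ then yields $\lim_{t \to t_0} A(t)^c \cup B(t)^c = A^c \cup B^c$. Rewriting both sides by De Morgan's laws as $\lim_{t \to t_0} [ A(t) \cap B(t) ]^c = [ A \cap B ]^c$ and applying the complement theorem once more gives $\lim_{t \to t_0} A(t) \cap B(t) = A \cap B$, completing the proof. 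The only points requiring attention are the use of $\min$ in place of $\max$ and the punctured-neighborhood bookkeeping, both of which are mechanical.
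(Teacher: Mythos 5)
Your proof is correct and is essentially the paper's own: the paper deliberately omits proofs in Subsection~\ref{sec3.thm}, stating that they are obtained by adapting the proofs of Subsection~\ref{sec2.thm}, and your argument is exactly that adaptation of the proof of Theorem~\ref{thm2.wakyo}, with Theorem~\ref{thm2.8} replaced by Theorem~\ref{thm3.ns}, tails $t \ge t_0$ replaced by punctured neighborhoods, and $\max\{t_1,t_2\}$ replaced by $\min\{\delta_1,\delta_2\}$. There are no gaps; the $\min$ bookkeeping and the reduction of part $(2)$ to part $(1)$ via the complement theorem and De Morgan's laws are handled correctly.
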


\begin{cor}
Let $n$ be a positive integer.
If a set-valued function $A_i(t)$ converges to the limit
$A_i$ as $t \to t_0$ for each $1 \le i \le n$,
then the following hold:
\begin{enumerate}[$(1)$]
\item $\displaystyle \lim_{t \to t_0}$
{\scriptsize $\displaystyle \bigcup_{1 \le i \le n}$} $A_i(t)$
$=$ {\scriptsize $\displaystyle \bigcup_{1 \le i \le n}$} $A_i$.
\item $\displaystyle \lim_{t \to t_0}$
{\scriptsize $\displaystyle \bigcap_{1 \le i \le n}$} $A_i(t)$
$=$ {\scriptsize $\displaystyle \bigcap_{1 \le i \le n}$} $A_i$.
\end{enumerate}
\end{cor}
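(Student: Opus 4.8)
The plan is to prove the corollary by induction on $n$, following exactly the pattern used for the corresponding corollary on convergence at infinity, with the earlier theorem on the union and intersection of two convergent set-valued functions at a point playing the role that Theorem~\ref{thm2.wakyo} played there.

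For part $(1)$, the base case $n=1$ is immediate, since the hypothesis already asserts $\displaystyle \lim_{t \to t_0} A_1(t) = A_1$. For the inductive step, I would assume the claim for $n = k-1$ and consider $k$ convergent set-valued functions. Choosing any $k-1$ of them and relabelling them $A_1(t), \dots, A_{k-1}(t)$, the induction hypothesis gives that $A_1(t) \cup \cdots \cup A_{k-1}(t)$ converges to $A_1 \cup \cdots \cup A_{k-1}$ as $t \to t_0$; in particular this finite union is itself a single convergent set-valued function. Writing the remaining function as $A_k(t)$, the full union $\{ A_1(t) \cup \cdots \cup A_{k-1}(t) \} \cup A_k(t)$ is then the union of two convergent set-valued functions, so the two-function theorem on the union at a point applies and yields convergence to $\{ A_1 \cup \cdots \cup A_{k-1} \} \cup A_k = A_1 \cup \cdots \cup A_k$. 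Since the labelling of the $k$ functions was arbitrary, the conclusion for $n=k$ follows, completing the induction by the principle of induction.

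For part $(2)$, I would argue in the same fashion, replacing ``union'' by ``intersection'' throughout and invoking the intersection half of the two-function theorem; alternatively, one can deduce it from part $(1)$ by passing to complements through the complement theorem and De Morgan's law, exactly as the intersection case was derived at infinity in the proof of Theorem~\ref{thm2.wakyo}.

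I expect no genuine obstacle here: the entire content is carried by the already-established two-function result, and the only point requiring care is the bookkeeping of the induction, namely recognizing that the $(k-1)$-fold union is again a single convergent set-valued function to which the two-function theorem can be applied, and that the freedom to relabel the $k$ functions legitimizes the unordered conclusion. The argument is purely formal and introduces no new estimate involving $\delta$.
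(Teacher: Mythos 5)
Your proposal is correct and matches the paper's own treatment: the paper proves the analogous corollary at infinity by exactly this induction on $n$ (base case trivial, inductive step via Theorem~\ref{thm2.wakyo} applied to the $(k-1)$-fold union as a single convergent set-valued function, with the relabelling remark), handles part $(2)$ "similarly," and for the at-a-point version simply notes that the same proofs carry over. No gaps; your alternative complement/De Morgan route for part $(2)$ is also sound and mirrors how the paper derived the intersection case inside Theorem~\ref{thm2.wakyo} itself.
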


\begin{thm} 
Let $A(t)$, $B(t)$, and $C(t)$ be set-valued functions such that
$A(t) \subset B(t)$ and $B(t) \subset C(t)$ for all $t \in \mathbb{R}$.
If $A(t)$ and $C(t)$ converge to a set $A$ as $t \to t_0$, then
$B(t)$ converges to A as $t \to t_0$.
\end{thm}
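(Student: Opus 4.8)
The plan is to follow the proof of Theorem~\ref{thm2.squ} almost verbatim, replacing the ``$t \ge t_j$'' bookkeeping with punctured neighborhoods of $t_0$ and invoking the point-version characterization Theorem~\ref{thm3.ns} in place of Theorem~\ref{thm2.8}. The argument splits cleanly into a purely set-theoretic inclusion, which is independent of any notion of convergence, followed by a neighborhood argument that combines the two convergence hypotheses.

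First I would establish the key inclusion
\begin{equation*}
B(t) \triangle A \subset \{ A(t) \triangle A \} \cup \{ C(t) \triangle A \} \qquad \text{for all } t \in \mathbb{R}.
\end{equation*}
This is exactly the inclusion proved in Theorem~\ref{thm2.squ} and uses only the containments $A(t) \subset B(t) \subset C(t)$, together with $A \triangle B = \{ A \cap B^c \} \cup \{ A^c \cap B \}$. From $A(t) \subset C(t)$ one gets $\{ A(t) \cap A^c \} \subset \{ C(t) \cap A^c \}$ and $\{ C(t)^c \cap A \} \subset \{ A(t)^c \cap A \}$, which simplifies $\{ A(t) \triangle A \} \cup \{ C(t) \triangle A \}$ to $\{ C(t) \cap A^c \} \cup \{ A(t)^c \cap A \}$; then $B(t) \subset C(t)$ and $A(t) \subset B(t)$ yield $\{ B(t) \cap A^c \} \subset \{ C(t) \cap A^c \}$ and $\{ B(t)^c \cap A \} \subset \{ A(t)^c \cap A \}$, so that $B(t) \triangle A$ is contained in the right-hand side. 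Taking complements gives
\begin{equation*}
\{ B(t) \triangle A \}^c \supset \{ A(t) \triangle A \}^c \cap \{ C(t) \triangle A \}^c.
\end{equation*}

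Next I would convert the convergence of $A(t)$ and $C(t)$ using Theorem~\ref{thm3.ns}: for each $x \in X$ there is $\delta_1 > 0$ with $x \in \{ A(t) \triangle A \}^c$ whenever $0 < |t - t_0| < \delta_1$, and $\delta_2 > 0$ with $x \in \{ C(t) \triangle A \}^c$ whenever $0 < |t - t_0| < \delta_2$. Setting $\delta = \min \{ \delta_1, \delta_2 \}$ --- the only substantive change from the infinity proof, where one instead takes a maximum of two thresholds --- every element of $X$ lies in both complements, hence in their intersection, for all $t$ with $0 < |t - t_0| < \delta$. By the inclusion above, such $x$ then lies in $\{ B(t) \triangle A \}^c$ for all these $t$. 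Thus for all $x \in X$ there exists $\delta > 0$ such that $x \in \{ B(t) \triangle A \}^c$ for all $t \in \{ s \in \mathbb{R} \bigm| 0 < |s - t_0| < \delta \}$, and Theorem~\ref{thm3.ns} lets me conclude $\displaystyle \lim_{t \to t_0} B(t) = A$.

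I do not anticipate a genuine obstacle: the set algebra in the first step is identical to the infinity case, and the remainder is routine. The one point requiring care is the quantifier bookkeeping --- choosing a single $\delta$ that serves both hypotheses via $\delta = \min \{ \delta_1, \delta_2 \}$, and checking that the punctured neighborhood for this $\delta$ is contained in the punctured neighborhoods for $\delta_1$ and $\delta_2$ individually, so that both membership conclusions are available simultaneously.
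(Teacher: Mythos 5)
Your proposal is correct and is precisely the adaptation the paper intends: Subsection~\ref{sec3.thm} explicitly omits proofs because they run ``in almost all the same way'' as Subsection~\ref{sec2.thm}, and your argument mirrors the proof of Theorem~\ref{thm2.squ} step for step, with Theorem~\ref{thm3.ns} in place of Theorem~\ref{thm2.8} and $\delta = \min\{\delta_1,\delta_2\}$ in place of the maximum of two thresholds. Both the set-theoretic inclusion and the neighborhood bookkeeping are handled exactly as the paper would.
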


\begin{thm} 
Let $A(t)$ be a set-valued function that converges to the limit $A$ as $t \to
t_0$, and let $B$ be a set such that for all $x \in B$,
there exists $\delta >0$ such that for all $t \in \{ s
\in \mathbb{R} \bigm| 0< |s - t_0| < \delta \}$
we have $x \in A(t)$.
Then, $B \subset A$.
\end{thm}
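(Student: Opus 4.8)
The plan is to mirror the proof of Theorem~\ref{cor2.6}, its counterpart at infinity, replacing the tail condition ``$t \ge t_0$'' everywhere by the punctured-neighborhood condition ``$0 < |s - t_0| < \delta$''. The argument will be a proof by contradiction that plays the hypothesis imposed on $B$ off against the point analog of Theorem~\ref{thm2.5} (the theorem stated immediately before this one, which asserts that if $A(t)$ converges to $A$ as $t \to t_0$ and $B \not\subset A$, then for some $x \in B$ and for all $\delta > 0$ there exists $t_j$ with $0 < |t_j - t_0| < \delta$ and $x \in A(t_j)^c$).

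First I would suppose, for a contradiction, that $B \not\subset A$. Since $A(t)$ converges to $A$ as $t \to t_0$ by hypothesis, the point analog of Theorem~\ref{thm2.5} applies and produces some $x \in B$ such that for every $\delta > 0$ there exists $t_j$ in the punctured neighborhood $\{ s \in \mathbb{R} \mid 0 < |s - t_0| < \delta \}$ with $x \in A(t_j)^c$. Next I would observe that this conclusion is exactly the logical negation, for that single $x$, of the condition placed on $B$: the hypothesis states that for every $x \in B$ there is a $\delta > 0$ with $x \in A(t)$ for all $t$ satisfying $0 < |t - t_0| < \delta$, and its negation for a fixed $x$ reads ``for all $\delta > 0$ there is a $t$ with $0 < |t - t_0| < \delta$ and $x \in A(t)^c$.'' Because the witness $x$ lies in $B$, the hypothesis furnishes such a $\delta$, which directly contradicts what the analog of Theorem~\ref{thm2.5} just supplied. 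This contradiction forces $B \subset A$, completing the proof.

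The argument is routine once the right companion theorem is invoked, so there is no genuine computational difficulty. The one step requiring care—and hence the main obstacle—is verifying that the quantifier structure of the point analog of Theorem~\ref{thm2.5} is the precise negation of the quantifier structure of the hypothesis on $B$, so that the two statements are truly incompatible for the single element $x$. In particular one must confirm that both statements range over the same punctured neighborhood and swap the roles of the ``$\forall \delta \, \exists t$'' and ``$\exists \delta \, \forall t$'' blocks correctly; once this alignment is checked, the contradiction is immediate.
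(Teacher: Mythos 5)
Your proof is correct and follows exactly the route the paper intends: the paper proves only the counterpart at infinity (Theorem~\ref{cor2.6}, by contradiction via Theorem~\ref{thm2.5}) and states that the theorems at a point are obtained ``in almost all the same way,'' which is precisely your adaptation using the point analog of Theorem~\ref{thm2.5}. The only slip is positional and harmless: that analog is not the theorem immediately preceding this one in Subsection~\ref{sec3.thm} (the squeeze theorem is), but it does appear earlier in the list with exactly the quantifier structure you invoke.
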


\begin{thm}
Let $A(t)$ and $B(t)$ be set-valued functions that converge to the limits $A$ and $B$,
respectively, as $t \to t_0$.
If $A(t) \subset B(t)$ for all $t \in \mathbb{R}$,
then $A \subset B$.
\end{thm}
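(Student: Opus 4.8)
The plan is to argue by contradiction, mirroring the proof of the corresponding ``at infinity'' statement and simply replacing every tail condition ``$t \ge t_0$'' by a punctured-neighborhood condition ``$0 < |t - t_0| < \delta$''. First I would suppose $A \not\subset B$, so that there is some element $x$ with $x \in A$ and $x \in B^c$; thus $A \cap B^c$ is nonempty.

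Next I would invoke the at-a-point analogue of Theorem~\ref{thm2.3} listed in this subsection: since $A(t)$ converges to $A$ as $t \to t_0$, for each $x \in A$ there exists $\delta > 0$ such that $x \in A(t)$ for all $t$ with $0 < |t - t_0| < \delta$. Because $A(t) \subset B(t)$ for all $t \in \mathbb{R}$ — and in particular for every $t$ in that punctured neighborhood — each such $x$ then satisfies $x \in B(t)$ throughout the neighborhood. Restricting attention to $x \in A \cap B^c$, which is a subset of $A$, I would conclude that for all $x \in A \cap B^c$ there exists $\delta > 0$ such that $x \in B(t)$ for all $t$ with $0 < |t - t_0| < \delta$.

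At this point the at-a-point analogue of Theorem~\ref{cor2.6} applies, taking $B(t)$ (which converges to $B$) in the role of the convergent set-valued function and $A \cap B^c$ in the role of the test set: its conclusion is $A \cap B^c \subset B$. Since $A \cap B^c$ was shown to be nonempty, this inclusion forces some element $x$ to satisfy both $x \in B$ and $x \in B^c$, which is the desired contradiction. Hence $A \subset B$.

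The argument is essentially a verbatim transcription of the ``at infinity'' case, so I do not expect a genuine obstacle. The only point requiring care is the quantifier bookkeeping around $\delta$: one must verify that the single neighborhood condition obtained for a given $x$ is precisely the hypothesis needed to feed into the analogue of Theorem~\ref{cor2.6}, and that the inclusion $A(t) \subset B(t)$ — stated for all $t \in \mathbb{R}$ — indeed covers every $t$ in each punctured neighborhood. Both checks are immediate, so the real content lives entirely in the two cited analogues, and the transition from the ``$t \ge t_0$'' formalism to the ``$0 < |t - t_0| < \delta$'' formalism introduces nothing new.
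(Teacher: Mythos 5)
Your proof is correct and is exactly what the paper intends: the paper does not write out a proof for this theorem but states that all theorems in this subsection are obtained by adapting the corresponding proofs at infinity, and your argument is precisely that adaptation, using the at-a-point analogues of Theorem~\ref{thm2.3} and Theorem~\ref{cor2.6} (both of which are listed in the same subsection) in the same contradiction scheme as the paper's proof of the at-infinity version. No gaps; the quantifier bookkeeping you flag is handled correctly.
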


\begin{thm} \label{thm3.const}
Let $A(t)$ be a set-valued function and $A$ a set.
If there exists $\delta > 0$ such that $A(t) = A$ for all $t \in \{ s
\in \mathbb{R} \bigm| 0< |s - t_0| < \delta \}$,
then $A(t)$ converges to $A$ as $t \to t_0$.
\end{thm}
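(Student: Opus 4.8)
The plan is to mirror the proof of Theorem~\ref{thm2.16}, the corresponding statement for convergence at infinity, replacing the ``sufficiently large $t$'' criterion with the punctured-neighborhood criterion and invoking Theorem~\ref{thm3.ns} in place of Theorem~\ref{thm2.8}. The key observation is that the hypothesis $A(t) = A$ on the punctured $\delta$-neighborhood forces the symmetric difference to vanish there, after which the necessary and sufficient condition of Theorem~\ref{thm3.ns} is satisfied trivially.

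First I would note that, by hypothesis, there exists $\delta > 0$ such that $A(t) = A$ for every $t \in \{ s \in \mathbb{R} \bigm| 0 < |s - t_0| < \delta \}$. For each such $t$ we have $A(t) \triangle A = \emptyset$, and therefore $\{ A(t) \triangle A \}^c = X$. This is the single computational step of the argument, and it follows immediately from the definition of the symmetric difference.

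Next I would verify the condition in Theorem~\ref{thm3.ns}. Take an arbitrary element $x \in X$ and keep the same $\delta > 0$ supplied by the hypothesis. Since $\{ A(t) \triangle A \}^c = X$ for all $t$ in the punctured neighborhood, we trivially have $x \in \{ A(t) \triangle A \}^c$ for all $t \in \{ s \in \mathbb{R} \bigm| 0 < |s - t_0| < \delta \}$. Hence for all $x \in X$ there exists $\delta > 0$ such that $x \in \{ A(t) \triangle A \}^c$ on the punctured $\delta$-neighborhood, which is exactly the statement of Theorem~\ref{thm3.ns}. Applying that theorem yields $\displaystyle \lim_{t \to t_0} A(t) = A$.

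The argument has essentially no obstacle; the only point requiring care is that the same $\delta$ works uniformly for every $x \in X$, which holds precisely because $\delta$ comes from the hypothesis and does not depend on the chosen element $x$. This is the same structural feature that made Theorem~\ref{thm2.16} follow so directly from Theorem~\ref{thm2.8}, so I expect the write-up to be a short, routine adaptation rather than a place where difficulty arises.
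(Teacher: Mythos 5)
Your proof is correct and is precisely what the paper intends: the paper gives no explicit proof of Theorem~\ref{thm3.const}, stating only that the theorems of Subsection~\ref{sec3.thm} are proved ``in almost all the same way'' as those of Subsection~\ref{sec2.thm}, and your argument is the direct adaptation of the paper's proof of Theorem~\ref{thm2.16} (symmetric difference vanishes on the punctured neighborhood, so its complement is $X$, and Theorem~\ref{thm3.ns} applies in place of Theorem~\ref{thm2.8}).
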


\begin{thm} 
Let $A(t)$ and $B(t)$ be set-valued functions that converge to the limits $A$ and $B$,
respectively, as $t \to t_0$, and let $C$ be a set.
If there exists $\delta > 0$ such that $A(t) \cup B(t) = C$ for all
$t \in \{ s
\in \mathbb{R} \bigm| 0< |s - t_0| < \delta \}$,
then $A \cup B = C$.
\end{thm}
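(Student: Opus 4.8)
The plan is to mirror the proof of Theorem~\ref{thm2.17} verbatim, replacing each ``at infinity'' ingredient by its ``at a point'' counterpart stated (without proof) in this subsection. The three tools I would invoke are: the point analogue of Theorem~\ref{thm2.wakyo} (the listed theorem asserting that if $A(t) \to A$ and $B(t) \to B$ as $t \to t_0$ then $\lim_{t \to t_0} A(t) \cup B(t) = A \cup B$); Theorem~\ref{thm3.const} (if a set-valued function equals a fixed set on a punctured neighborhood of $t_0$, it converges to that set); and the point analogue of Theorem~\ref{thm2.uni} (uniqueness of the limit at a point). Since the excerpt permits assuming any earlier result, all three are available.

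First I would record that $A(t)$ and $B(t)$ converge to $A$ and $B$ as $t \to t_0$, so the point analogue of Theorem~\ref{thm2.wakyo} gives $\lim_{t \to t_0} A(t) \cup B(t) = A \cup B$. Second, I would regard the union $A(t) \cup B(t)$ as a single set-valued function; by hypothesis it equals $C$ for every $t$ in the punctured neighborhood $\{ s \in \mathbb{R} \mid 0 < |s - t_0| < \delta \}$, so Theorem~\ref{thm3.const} applies to it directly and yields $\lim_{t \to t_0} A(t) \cup B(t) = C$. Third, the set-valued function $A(t) \cup B(t)$ now has two limits as $t \to t_0$, namely $A \cup B$ and $C$; by uniqueness of the limit at a point they must coincide, so $A \cup B = C$, which is the desired conclusion.

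There is essentially no genuine obstacle here: the argument is a routine transcription of the infinity case, and the only point requiring a moment's care is the bookkeeping step of treating $A(t) \cup B(t)$ as a bona fide set-valued function to which Theorem~\ref{thm3.const} and the uniqueness theorem legitimately apply. I would make that identification explicit so the hypothesis ``$A(t) \cup B(t) = C$ on the punctured $\delta$-neighborhood'' is seen to match the template of Theorem~\ref{thm3.const} exactly (same punctured set, same fixed target set). Once that is noted, combining the two limit equalities through uniqueness closes the proof immediately, with no calculation needed.
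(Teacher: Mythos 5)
Your proof is correct and is essentially the paper's own argument: the paper proves the statement only implicitly, by noting that the proof of Theorem~\ref{thm2.17} adapts verbatim to the point case, and that proof is exactly your three steps (the union-of-limits theorem, the eventually-constant theorem, and uniqueness of the limit to combine the two equalities). Your only departure is making the appeal to uniqueness explicit where the paper merely says ``combining'' the two limit equations, which is a harmless clarification.
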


\begin{thm} 
Let $A(t)$ and $B(t)$ be set-valued functions that converge to the limits $A$ and $B$,
respectively, as $t \to t_0$, and let $C$ be a set.
If there exists $\delta > 0$ such that $A(t) \cap B(t) = C$ for all $t \in \{ s
\in \mathbb{R} \bigm| 0< |s - t_0| < \delta \}$,
then $A \cap B = C$.
\end{thm}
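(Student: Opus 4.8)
The plan is to mirror the proof of Theorem~\ref{thm2.18} (equivalently Theorem~\ref{thm2.17}) almost verbatim, replacing convergence at infinity by convergence at $t_0$ and invoking the point-versions of the lemmas already listed in this subsection. Concretely, I would obtain the identity $A \cap B = C$ by exhibiting two expressions for one and the same limit and then appealing to uniqueness.

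First I would apply the intersection part of the theorem on unions and intersections at a point. Since $A(t)$ converges to $A$ and $B(t)$ converges to $B$ as $t \to t_0$, that theorem yields
\[
\lim_{t \to t_0} A(t) \cap B(t) = A \cap B.
\]
Next I would regard $t \mapsto A(t) \cap B(t)$ as a single set-valued function and apply Theorem~\ref{thm3.const}. By hypothesis there is $\delta > 0$ with $A(t) \cap B(t) = C$ for every $t \in \{ s \in \mathbb{R} \bigm| 0 < |s - t_0| < \delta \}$; hence this set-valued function is constantly $C$ on a punctured neighborhood of $t_0$, and Theorem~\ref{thm3.const} gives
\[
\lim_{t \to t_0} A(t) \cap B(t) = C.
\]
Finally, the two displays present both $A \cap B$ and $C$ as limits at $t_0$ of the \emph{same} set-valued function $A(t) \cap B(t)$. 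By the uniqueness theorem for limits at a point, these limits coincide, so $A \cap B = C$, which is the claim.

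I do not expect a genuine obstacle, since every ingredient is a point-analog of a result already established at infinity. The only points deserving care are that Theorem~\ref{thm3.const} is phrased for a set-valued function that is eventually constant, so I must explicitly treat $A(t) \cap B(t)$ as the relevant set-valued function before applying it; and the passage from the two limit identities to $A \cap B = C$ must go through the uniqueness theorem rather than through any naive chaining of equalities, exactly as equations~\eqref{eq2.45} and~\eqref{eq2.46} are combined in the proof of Theorem~\ref{thm2.17}. An equally short alternative would bypass the union/intersection theorem and argue directly from the necessary-and-sufficient condition of Theorem~\ref{thm3.ns}, but the route above is the most faithful to the pattern of Section~\ref{sec02}.
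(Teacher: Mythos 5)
Your proposal is correct and is exactly the argument the paper intends: the paper only lists this theorem without proof, directing the reader to adapt the proofs of Theorems~\ref{thm2.17} and~\ref{thm2.18}, and your three steps (the intersection part of the union/intersection theorem at $t_0$, Theorem~\ref{thm3.const} applied to $A(t)\cap B(t)$, and uniqueness of the limit at a point) are precisely the point-analogues of the steps used there. Your explicit appeal to the uniqueness theorem is also the right way to make rigorous what the paper phrases as ``combining'' equations~\eqref{eq2.45} and~\eqref{eq2.46}.
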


\vspace*{1\baselineskip}
We can adapt the above theorems to Definition~\ref{def3.left}
and~\ref{def3.right}.
The same results hold for one-sided convergence.

We conclude this subsection by proving a theorem on the
coincidence of the left-hand limit and the right-hand limit.

\begin{thm} \label{thm3.coincide}
Let $A(t)$ be a set-valued function and $A$ a set.
Then, $\displaystyle \lim_{t \to t_0} A(t) = A$
if and only if
$\displaystyle \lim_{t \to t_0 -0} A(t) = A
= \displaystyle \lim_{t \to t_0 + 0} A(t)$.
\end{thm}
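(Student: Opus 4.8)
The plan is to reduce everything to the ``pointwise over $X$'' characterization of convergence, namely Theorem~\ref{thm3.ns} together with its one-sided analogues (guaranteed by the remark that the theorems of this subsection adapt verbatim to Definitions~\ref{def3.left} and~\ref{def3.right}). Working directly from Definitions~\ref{def3.conver}, \ref{def3.left}, and~\ref{def3.right} is awkward in the reverse direction: there the two-sided definition, applied to a point $t_i < t_0$ and an element $x \in A(t_i) \triangle A$, demands $x \notin A(t) \triangle A$ for $t$ on \emph{both} sides of $t_0$, whereas the left-hand hypothesis only controls $t$ to the left. The characterization theorems sidestep this by quantifying over every $x \in X$ uniformly, which is precisely what lets the two sides be glued together.

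So first I would record the three equivalent forms. By Theorem~\ref{thm3.ns}, $\displaystyle \lim_{t \to t_0} A(t) = A$ iff for every $x \in X$ there is $\delta > 0$ with $x \in \{ A(t) \triangle A \}^c$ for all $t$ satisfying $0 < |t - t_0| < \delta$; by the left-hand analogue, $\displaystyle \lim_{t \to t_0 - 0} A(t) = A$ iff for every $x \in X$ there is $\delta > 0$ with $x \in \{ A(t) \triangle A \}^c$ for all $t$ satisfying $0 < t_0 - t < \delta$; and symmetrically for the right-hand limit with $0 < t - t_0 < \delta$. The key set-theoretic observation driving the proof is the decomposition
\begin{equation*}
\{ s \bigm| 0 < |s - t_0| < \delta \}
= \{ s \bigm| 0 < t_0 - s < \delta \} \cup \{ s \bigm| 0 < s - t_0 < \delta \}.
\end{equation*}

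For the ``only if'' direction I would fix an arbitrary $x \in X$, take the $\delta$ supplied by the two-sided characterization, and observe that since each one-sided punctured neighborhood is a subset of the two-sided one, the \emph{same} $\delta$ witnesses both one-sided conditions; hence both one-sided limits equal $A$. For the ``if'' direction I would again fix $x \in X$, obtain $\delta_L$ and $\delta_R$ from the left- and right-hand characterizations respectively, and set $\delta = \min \{ \delta_L, \delta_R \}$. By the decomposition above, any $t$ with $0 < |t - t_0| < \delta$ lies in one of the two one-sided neighborhoods of radius at most $\delta_L$ or $\delta_R$, so $x \in \{ A(t) \triangle A \}^c$ in either case; Theorem~\ref{thm3.ns} then yields $\displaystyle \lim_{t \to t_0} A(t) = A$.

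There is essentially no hard step once the characterization is invoked; the only point requiring care is that the choices of $\delta_L$ and $\delta_R$ depend on $x$, but since the characterization quantifies $x$ first and $\delta$ second, taking the minimum for each fixed $x$ is legitimate and the argument goes through uniformly.
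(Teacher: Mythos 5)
Your proposal is correct and is essentially identical to the paper's own proof: both directions reduce to Theorem~\ref{thm3.ns} and its one-sided adaptations, with the forward direction using the inclusion of each one-sided punctured neighborhood in the two-sided one, and the reverse direction taking $\delta = \min\{\delta_1,\delta_2\}$ for each fixed $x \in X$. Your remark about why the characterization theorem (rather than the raw definitions) is needed to glue the two sides is a useful clarification, but the underlying argument matches the paper's.
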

\begin{proof}
Suppose $\displaystyle \lim_{t \to t_0} A(t) = A$.
By Theorem~\ref{thm3.ns},
for all $x \in X$, there exists
$\delta >0$ such that for all $t \in \{ s \in
\mathbb{R} \bigm| 0< |s - t_0| < \delta \}$
we have $x \in \{ A(t) \triangle A \}^c$.
This implies that
for all $x \in X$, there exists
$\delta >0$ such that for all $t \in \{ s \in
\mathbb{R} \bigm| 0< t_0 - s < \delta \}$
we have $x \in \{ A(t) \triangle A \}^c$.
Thus, we get $\displaystyle \lim_{t \to t_0 -0} A(t) = A$
by adapting Theorem~\ref{thm3.ns} to Definition~\ref{def3.left}.
Similarly, it holds that
for all $x \in X$, there exists
$\delta >0$ such that for all $t \in \{ s \in
\mathbb{R} \bigm| 0< s - t_0 < \delta \}$
we have $x \in \{ A(t) \triangle A \}^c$.
Thus, we get $\displaystyle \lim_{t \to t_0 + 0} A(t)$
by adapting Theorem~\ref{thm3.ns} to Definition~\ref{def3.right}.

Now suppose that 
$\displaystyle \lim_{t \to t_0 -0} A(t) = A
= \displaystyle \lim_{t \to t_0 + 0} A(t)$.
Adapting Theorem~\ref{thm3.ns} to Definition~\ref{def3.left},
we see that
for all $x \in X$, there exists
$\delta_1 >0$ such that for all $t \in \{ s \in
\mathbb{R} \bigm| 0< t_0 - s < \delta_1 \}$
we have $x \in \{ A(t) \triangle A \}^c$.
Adapting Theorem~\ref{thm3.ns} to Definition~\ref{def3.right},
on the other hand,
we see that
for all $x \in X$, there exists
$\delta_2 >0$ such that for all $t \in \{ s \in
\mathbb{R} \bigm| 0< t_0 - s < \delta_2 \}$
we have $x \in \{ A(t) \triangle A \}^c$.
Let $\delta = \min \{ \delta_1, \delta_2 \}$.
Then, we see that
for all $x \in X$, there exists
$\delta >0$ such that for all $t \in \{ s \in
\mathbb{R} \bigm| 0< |s - t_0| < \delta \}$
we have $x \in \{ A(t) \triangle A \}^c$.

Therefore, we have proved the theorem.
\end{proof}

\section{Continuity of Set-Valued Functions at a Point}
\label{sec04}

Section~\ref{sec04} discusses the continuity of set-valued functions at a point.
We define it in Subsection~\ref{sec4.def}, give examples in Subsection~\ref{sec4.exa},
and then move to theorems in Subsection~\ref{sec4.thm}.
Similarly to Subsection~\ref{sec3.thm}, we only list theorems and
don't prove them in Subsection~\ref{sec3.thm}.

\subsection{Definition}
\label{sec4.def}

Section~\ref{sec04} starts with the definition of the continuity of set-valued functions
at a point.

\begin{defi} \label{def4.conti}
Let $A(t)$ be a set-valued function that depends on the variable $t \in \mathbb{R}$.
We say that $A(t)$ is continuous at $t_0$ if
for all $t_i \in \mathbb{R}$ and for all $x \in A(t_i) \triangle A(t_0)$,
there exists $\delta >0$ such that for all $t \in \{ s
\in \mathbb{R} \bigm| |s - t_0| < \delta \}$
we have  $x \not\in A(t) \triangle A(t_0)$,
and we write $\displaystyle \lim_{t \to t_0}A(t) = A(t_0)$.
\end{defi}

The continuity at a point stands on the same idea as the convergence at a point.
Definition~\ref{def4.conti} also utilizes the property that $A(t) \triangle A(t_0)$
gets ``smaller" and ``closer" to $\emptyset$ as $t$ gets closer to $t_0$.
Unlike the convergence at a point, however, the continuity at a point is
critically related to the set $A(t_0)$. The continuity at a point
demands that $A(t)$ gets ``closer" to the set $A(t_0)$,
as well as to the limit $\displaystyle \lim_{t \to t_0}A(t)$,
as $t$ gets closer to $t_0$.
In other words, it demands that
$\displaystyle \lim_{t \to t_0}A(t)$ coincides with $A(t_0)$.
This fact justifies the notation $\displaystyle \lim_{t \to t_0}A(t) = A(t_0)$,
which simplifies the definition of the continuity at a point.

We can define one-sided continuity at a point, similarly
to one-sided convergence at a point.

\begin{defi} \label{def4.left}
Let $A(t)$ be a set-valued function that depends on the variable $t \in \mathbb{R}$.
We say that $A(t)$ is continuous on the left at $t_0$ if
for all $t_i \le t_0$ and for all $x \in A(t_i) \triangle A(t_0)$,
there exists $\delta >0$ such that for all $t \in \{ s
\in \mathbb{R} \bigm|0 \le t_0 - s < \delta \}$
we have  $x \not\in A(t) \triangle A(t_0)$,
and we write $\displaystyle \lim_{t \to t_0 - 0}A(t) = A(t_0)$.
\end{defi}

\begin{defi} \label{def4.right}
Let $A(t)$ be a set-valued function that depends on the variable $t \in \mathbb{R}$.
We say that $A(t)$ is continuous on the right at $t_0$ if
for all $t_i \ge t_0$ and for all $x \in A(t_i) \triangle A(t_0)$,
there exists $\delta >0$ such that for all $t \in \{ s
\in \mathbb{R} \bigm|0 \le s - t_0 < \delta \}$
we have  $x \not\in A(t) \triangle A(t_0)$,
and we write $\displaystyle \lim_{t \to t_0 + 0}A(t) = A(t_0)$.
\end{defi}

\subsection{Examples}
\label{sec4.exa}

Subsection~\ref{sec4.exa} gives examples of the continuity of set-valued functions
at a point. Similarly to convergence at a point,
both-sided continuity seldom exists in a Euclidean space.
Consider the following two set-valued functions.
We set them defined for $t > 0$.
\begin{gather*}
A(t) = \{ (x , y ) \in \mathbb{R}^2 \bigm|
x^2 + y^2 < t^2 \}. \\
B(t) = \{ (x , y ) \in \mathbb{R}^2 \bigm|
x^2 + y^2 \le t^2 \}.
\end{gather*}
The set-valued functions above are the same ones as $A(t), B(t)$ in Subsection~\ref{sec3.exa}.
Substituting $t=1$ in $A(t)$ and $B(t)$, we get
\begin{gather*}
A(1) = \{ (x , y ) \in \mathbb{R}^2 \bigm|
x^2 + y^2 < 1 \}. \\
B(1) = \{ (x , y ) \in \mathbb{R}^2 \bigm|
x^2 + y^2 \le 1 \}.
\end{gather*}
Note that $A(1)$ is equal to the left-hand limit
$\displaystyle \lim_{t \to 1-0}A(t)$,
and that $B(1)$ is equal to the right-hand limit
$\displaystyle \lim_{t \to 1+0}B(t)$
(See Subsection~\ref{sec3.exa}).
Thus, in the same way as Subsection~\ref{sec3.exa}
we can show that $A(t)$ is continuous on the left at $t=1$,
and that $B(t)$ is continuous on the right at $t=1$.
It can also be shown that
$A(t)$ is not continuous on the right at $t=1$,
and that $B(t)$ is not continuous on the left at $t=1$.

Both-sided continuity does not exist in the examples here.
It is difficult to find it in a Euclidean space.
This difficulty stems from the fact that the left-hand limit
seldom coincides with the right-hand one in a Euclidean space.
In a Euclidean space, we can find out both-sided continuity at a point in case
a set-valued function holds stationary in a neighborhood of the point
(See Theorem~\ref{thm4.const}
in Subsection~\ref{sec4.thm}).

Finally, we explain that the continuity of Definition~\ref{def4.conti}
is different from the continuity of correspondence.
The set-valued functions $A(t)$ and $B(t)$ in this subsection are continuous
in terms of correspondence because $A(t)$ and $B(t)$
are upper and lower hemi-continuous.
Here, the degree of strictness causes the difference.
Namely,
Definition~\ref{def4.conti} demands more strictness in continuity
than correspondence. Definition~\ref{def4.conti}
does not permit any small sudden change in continuity,
whereas correspondence permits a sudden change if it is not large.

We explain this through the set-valued functions $A(t)$ and $B(t)$
in this subsection.
The set-valued function $A(t)$ gets ``closer'' to the set $A(t_0)$
as $t$ gets closer to a point $t_0$ from the left.
At last $A(t)$ gets equal to $A(t_0)$
when $t$ gets equal to $t_0$.
On the other hand, $B(t)$ gets ``closer'' to the set $A(t_0)$
as $t$ gets closer to $t_0$ from the left. But $B(t)$ ``skips'' $A(t_0)$
and gets equal to $B(t_0)$ when $t$ gets equal to $t_0$.
This ``skip'' of $B(t)$ is a small but sudden change.
Definition~\ref{def4.conti} does not permit this sudden change
even though it is small.
Thus, $B(t)$ is not continuous in Definition~\ref{def4.conti}.

Similarly, $A(t)$ is not continuous in Definition~\ref{def4.conti}.
The set-valued function $A(t)$ gets ``closer'' to the set $B(t_0)$
as $t$ gets closer to $t_0$ from the right. But $A(t)$ ``skips'' $B(t_0)$
and gets equal to $A(t_0)$ when $t$ gets equal to $t_0$.

In correspondence, a sudden change is permitted if it is sufficiently small.
Thus, $A(t)$ and $B(t)$ is continuous in correspondence.

Therefore, we could say that Definition~\ref{def4.conti} demands \textit{strict}
continuity, whereas correspondence demands \textit{loose} continuity.

\subsection{Theorems}
\label{sec4.thm}

In Subsection~\ref{sec4.thm}, we discuss theorems on
Definition~\ref{def4.conti}.
Definition~\ref{def4.conti} is constructed in a similar way
to Definition~\ref{def2.conver} and~\ref{def3.conver}.
Thus, we can derive many theorems similar to those of
Subsection~\ref{sec2.thm} and~\ref{sec3.thm} in almost all the same way.
For this reason, we only list theorems and do not
prove them. For simplicity,
set-valued functions are defined on $\mathbb{R}$.
\footnote{
Similarly to Subsection~\ref{sec3.thm}, we practically apply the listed theorems
by adapting them to one-sided continuity.
For convenience, we list the theorems of both-sided continuity, not
one-sided continuity.
}

\begin{thm} 
Let $A(t)$ be a set-valued function and $A(t)^c$ the complement
of $A(t)$ for each $t \in \mathbb{R}$.
Then, $A(t)$ is continuous at $t_0$ if and only if
$A(t)^c$ is continuous at $t_0$.
\end{thm}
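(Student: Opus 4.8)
The plan is to imitate the proof of Theorem~\ref{thm2.start} verbatim, since the present statement is the exact continuity-analogue of that convergence result. The engine of that earlier proof was property $(1)$ of the symmetric difference, namely $A \triangle B = A^c \triangle B^c$. The only adaptation needed here is that both the ``source'' set $A(t_i) \triangle A(t_0)$ and the ``target'' set $A(t) \triangle A(t_0)$ appearing in Definition~\ref{def4.conti} must be rewritten in complemented form, rather than the single pair $A(t_i)\triangle A$ and $A(t)\triangle A$ that appeared at infinity.

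First I would assume that $A(t)$ is continuous at $t_0$ and unfold Definition~\ref{def4.conti}: for all $t_i \in \mathbb{R}$ and for all $x \in A(t_i) \triangle A(t_0)$, there exists $\delta > 0$ such that for all $t \in \{ s \in \mathbb{R} \bigm| |s - t_0| < \delta \}$ we have $x \not\in A(t) \triangle A(t_0)$. Then I would apply property $(1)$ twice, once to the membership condition on $x$ and once to the conclusion, using $A(t_i) \triangle A(t_0) = A(t_i)^c \triangle A(t_0)^c$ and $A(t) \triangle A(t_0) = A(t)^c \triangle A(t_0)^c$. This rewriting turns the statement into: for all $t_i \in \mathbb{R}$ and for all $x \in A(t_i)^c \triangle A(t_0)^c$, there exists $\delta > 0$ such that for all $t \in \{ s \in \mathbb{R} \bigm| |s - t_0| < \delta \}$ we have $x \not\in A(t)^c \triangle A(t_0)^c$, which is exactly the assertion, via Definition~\ref{def4.conti}, that $A(t)^c$ is continuous at $t_0$.

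For the converse, I would note that the argument is entirely symmetric: applying property $(1)$ in the reverse direction (or simply observing that $(A(t)^c)^c = A(t)$) recovers the continuity of $A(t)$ from that of $A(t)^c$, so it suffices to remark that ``we can prove the converse similarly,'' as was done in Theorem~\ref{thm2.start}.

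I do not expect any genuine obstacle here; the proof is a purely formal substitution and contains no quantifier manipulation beyond what property $(1)$ supplies. The only point requiring a moment of care is making sure the identity is applied to \emph{both} occurrences of the symmetric difference so that the translated statement matches Definition~\ref{def4.conti} for $A(t)^c$ on the nose, including the fact that the $\delta$ and the neighborhood $\{ s \in \mathbb{R} \bigm| |s - t_0| < \delta \}$ carry over unchanged because they do not reference the sets themselves.
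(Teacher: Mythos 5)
Your proposal is correct and is exactly the argument the paper intends: the paper omits proofs in Subsection~\ref{sec4.thm}, stating that they follow ``in almost all the same way'' as those of Subsection~\ref{sec2.thm}, and the intended adaptation is precisely your verbatim imitation of the proof of Theorem~\ref{thm2.start}, applying $A \triangle B = A^c \triangle B^c$ to both occurrences of the symmetric difference (with the limit set now being $A(t_0)$, whose complement is automatically the value of $A(t)^c$ at $t_0$). No gap; the converse by symmetry is also how the paper handles it.
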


\begin{thm}
Let $A(t)$ be a set-valued function that is continuous at $t_0$.
Then, for all $t_i \in \mathbb{R}$ and for all $x \in \{ A(t_i) \triangle A(t_0) \}^c$,
there exists $\delta >0$ such that for all $t \in \{ s \in
\mathbb{R} \bigm| |s - t_0| < \delta \}$
we have  $x \in \{ A(t) \triangle A(t_0) \}^c$.
\end{thm}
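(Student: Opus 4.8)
The plan is to mirror the proof of Theorem~\ref{thm2.2}, since the present statement is exactly its analogue for continuity at a point, with the convergence condition replaced by Definition~\ref{def4.conti}. First I would fix an arbitrary point $t_i \in \mathbb{R}$ and an arbitrary element $x \in \{ A(t_i) \triangle A(t_0) \}^c$, and then split into two exhaustive cases according to whether $x$ ever belongs to $A(t) \triangle A(t_0)$ as $t$ ranges over $\mathbb{R}$.

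In the first case, suppose there is some $t_k \in \mathbb{R}$ with $x \in A(t_k) \triangle A(t_0)$. Because $A(t)$ is continuous at $t_0$, Definition~\ref{def4.conti} applied to the point $t_k$ and the element $x \in A(t_k) \triangle A(t_0)$ furnishes a $\delta > 0$ such that for all $t \in \{ s \in \mathbb{R} \bigm| |s - t_0| < \delta \}$ we have $x \not\in A(t) \triangle A(t_0)$, that is, $x \in \{ A(t) \triangle A(t_0) \}^c$. In the second case, suppose instead that $x \in \{ A(t) \triangle A(t_0) \}^c$ for every $t \in \mathbb{R}$; then any $\delta > 0$ works, and in particular the required $\delta$ exists trivially. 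Since the two cases exhaust all possibilities, in either situation we obtain a $\delta > 0$ with the desired property, and as $t_i$ and $x$ were chosen arbitrarily the conclusion follows.

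One point worth checking, though I do not expect it to cause trouble, is that the continuity neighborhood $\{ s \in \mathbb{R} \bigm| |s - t_0| < \delta \}$ contains $t_0$ itself, unlike the punctured neighborhood used for convergence at a point. This is harmless: at $t = t_0$ one has $A(t_0) \triangle A(t_0) = \emptyset$, so $x \in \{ A(t_0) \triangle A(t_0) \}^c = X$ holds automatically, and consistency at $t_0$ is never in question. Hence the only real work is the bookkeeping of the case split, exactly as in Theorem~\ref{thm2.2}; I anticipate no genuine obstacle beyond transcribing that argument faithfully into the continuity language.
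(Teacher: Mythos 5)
Your proof is correct and is exactly what the paper intends: the paper gives no separate proof for this theorem, stating instead that the continuity theorems of Subsection~\ref{sec4.thm} follow ``in almost all the same way'' as those of Subsection~\ref{sec2.thm}, and your argument faithfully transcribes the two-case proof of Theorem~\ref{thm2.2} into the continuity setting. Your side remark about the unpunctured neighborhood is also right and harmless, since $A(t_0) \triangle A(t_0) = \emptyset$ makes the condition at $t = t_0$ automatic.
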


\begin{thm} \label{thm4.ns} %%%%%%%%%%%%%%%%%%%%
Let $A(t)$ be a set-valued function and $A$ a set. Then,
$A(t)$ is continuous at $t_0$ if and only if
for all $x \in X$, there exists
$\delta >0$ such that for all $t \in \{ s \in
\mathbb{R} \bigm| |s - t_0| < \delta \}$
we have $x \in \{ A(t) \triangle A(t_0) \}^c$.
\end{thm}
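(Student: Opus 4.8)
The plan is to follow the proof of Theorem~\ref{thm2.8} almost verbatim, with the limit set $A$ replaced throughout by $A(t_0)$ and the tail condition ``for all $t \ge t_j$'' replaced by the neighborhood condition ``for all $t \in \{ s \in \mathbb{R} \bigm| |s-t_0| < \delta \}$.'' Both implications will rely on the three set-theoretic facts (First, Second, Third) collected just before Theorem~\ref{thm2.8}, together with the theorem stated immediately before the present one, which is the continuity analogue of Theorem~\ref{thm2.2}.

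For the forward implication I would assume $A(t)$ is continuous at $t_0$. Applying Definition~\ref{def4.conti}, for all $t_{i_1} \in \mathbb{R}$ and all $x \in A(t_{i_1}) \triangle A(t_0)$ there is $\delta_1 > 0$ with $x \in \{ A(t) \triangle A(t_0) \}^c$ throughout $|t-t_0| < \delta_1$; applying the preceding theorem, for all $t_{i_2} \in \mathbb{R}$ and all $x \in \{ A(t_{i_2}) \triangle A(t_0) \}^c$ there is $\delta_2 > 0$ yielding the same conclusion throughout $|t-t_0| < \delta_2$. Taking a common point $t_i$ for $t_{i_1}$ and $t_{i_2}$ and setting $\delta = \min\{ \delta_1, \delta_2 \}$, every element of both $A(t_i) \triangle A(t_0)$ and its complement lies in $\{ A(t) \triangle A(t_0) \}^c$ for all $t$ with $|t-t_0| < \delta$. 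Since $\{ A(t_i) \triangle A(t_0) \} \cup \{ A(t_i) \triangle A(t_0) \}^c = X$, the ``Second'' fact gives the conclusion for all $x \in X$, and the quantifier ``for all $t_i$'' may be dropped because $X$ does not depend on the choice of $t_i$.

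For the converse I would assume the neighborhood condition: for all $x \in X$ there is $\delta > 0$ such that $x \in \{ A(t) \triangle A(t_0) \}^c$ throughout $|t - t_0| < \delta$. Since $A(t_i) \triangle A(t_0) \subset X$ for every $t_i \in \mathbb{R}$, the ``First'' fact shows that every element of $A(t_i) \triangle A(t_0)$ lies in $\{ A(t) \triangle A(t_0) \}^c$ throughout the relevant $\delta$-neighborhood, which is precisely Definition~\ref{def4.conti}; hence $A(t)$ is continuous at $t_0$.

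I do not expect a genuine obstacle, since the argument is structurally identical to that of Theorem~\ref{thm2.8}. The one point demanding attention is the combination of the two local conditions: where the convergence-at-infinity proof took $t_0 = \max\{ t_{j_1}, t_{j_2} \}$ to intersect two tails, here I must take $\delta = \min\{ \delta_1, \delta_2 \}$ to intersect two neighborhoods, and I should make sure the neighborhood $|t - t_0| < \delta$ (which, unlike the convergence-at-a-point definition, includes $t_0$ itself) is used consistently throughout both directions.
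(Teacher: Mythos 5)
Your proposal is correct and is exactly the approach the paper intends: the paper omits proofs in Subsection~\ref{sec4.thm}, stating they follow ``in almost all the same way'' as Subsection~\ref{sec2.thm}, and your argument is a faithful adaptation of the proof of Theorem~\ref{thm2.8}, with Theorem~\ref{thm2.2} replaced by its continuity analogue (the theorem preceding this one) and $\max\{t_{j_1},t_{j_2}\}$ replaced by $\min\{\delta_1,\delta_2\}$. Your closing observation about consistently using the non-punctured neighborhood $|t-t_0|<\delta$ is the right detail to watch and causes no difficulty.
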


\begin{thm} 
Let $A(t)$ be a set-valued function that is continuous at $t_0$.
Then, for all $x \in A(t_0)$,
there exists $\delta >0$ such that for all $t \in \{ s
\in \mathbb{R} \bigm| |s - t_0| < \delta \}$
we have $x \in A(t)$.
\end{thm}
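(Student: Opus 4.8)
The plan is to mirror the proof of Theorem~\ref{thm2.3} almost verbatim, making three substitutions: the limit set $A$ becomes $A(t_0)$, the characterization of convergence at infinity (Theorem~\ref{thm2.8}) becomes its continuity analogue (Theorem~\ref{thm4.ns}), and the tail quantifier ``for all $t \ge t_0$'' becomes the neighborhood quantifier ``for all $t \in \{ s \in \mathbb{R} \bigm| |s - t_0| < \delta \}$''. The structural logic is identical; only the underlying convergence notion changes. Since $A(t)$ is continuous at $t_0$, Theorem~\ref{thm4.ns} immediately yields that for all $x \in X$ there exists $\delta > 0$ such that for all $t$ with $|s - t_0| < \delta$ we have $x \in \{ A(t) \triangle A(t_0) \}^c$.

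First I would specialize this to elements of $A(t_0)$. Because $A(t_0) \subset X$, every $x \in A(t_0)$ inherits the conclusion: there is $\delta > 0$ with $x \in \{ A(t) \triangle A(t_0) \}^c$ throughout the neighborhood. Since $x$ was drawn from $A(t_0)$, I can strengthen this membership to $x \in \{ A(t) \triangle A(t_0) \}^c \cap A(t_0)$, exactly as in the convergence case. The crux is then the set-theoretic inclusion $\{ A(t) \triangle A(t_0) \}^c \cap A(t_0) \subset A(t)$, valid for every $t \in \mathbb{R}$, which I would verify by the same computation used for Theorem~\ref{thm2.3}:
\begin{align*}
\{ A(t) \triangle A(t_0) \}^c \cap A(t_0)
&= \bigl[ \{ A(t) \cap A(t_0) \} \cup \{ A(t)^c \cap A(t_0)^c \} \bigr] \cap A(t_0) \\
&= A(t) \cap A(t_0) \\
&\subset A(t),
\end{align*}
where the second equality uses the distributive law together with $A(t_0)^c \cap A(t_0) = \emptyset$.

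Combining the two facts finishes the argument: since $\{ A(t) \triangle A(t_0) \}^c \cap A(t_0)$ is a subset of $A(t)$ for every $t$, each $x \in A(t_0)$ lying in $\{ A(t) \triangle A(t_0) \}^c \cap A(t_0)$ lies in $A(t)$ as well, so for all $x \in A(t_0)$ there exists $\delta > 0$ such that for all $t$ with $|s - t_0| < \delta$ we have $x \in A(t)$. The main obstacle here is minimal: the only point requiring care is invoking the correct continuity characterization (Theorem~\ref{thm4.ns} rather than Theorem~\ref{thm2.8}) and propagating the $\delta$-neighborhood quantifier faithfully; the set-algebra inclusion is routine given properties $(1)$ and $(2)$ of the symmetric difference recorded in the Settings.
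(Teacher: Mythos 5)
Your proof is correct and is exactly the argument the paper intends: Subsection~\ref{sec4.thm} states its theorems without proof, noting they follow in almost all the same way as Subsection~\ref{sec2.thm}, and your adaptation of the proof of Theorem~\ref{thm2.3} --- invoking Theorem~\ref{thm4.ns} in place of Theorem~\ref{thm2.8}, replacing $A$ by $A(t_0)$, and replacing the tail quantifier by the $\delta$-neighborhood quantifier --- is precisely that adaptation. The set-algebra step $\{ A(t) \triangle A(t_0) \}^c \cap A(t_0) = A(t) \cap A(t_0) \subset A(t)$ matches the paper's computation for Theorem~\ref{thm2.3} verbatim, so there is nothing to correct beyond the trivial notational slip of writing $|s-t_0|<\delta$ where $t$ is the running variable.
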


\begin{cor}
Let $A(t)$ be a set-valued function that is continuous at $t_0$.
Then, for all $x \in A(t_0)^c$,
there exists $\delta >0$ such that for all $t \in \{ s
\in \mathbb{R} \bigm| |s - t_0| < \delta \}$
we have $x \in A(t)^c$.
\end{cor}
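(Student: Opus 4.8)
The plan is to follow exactly the pattern used to derive Corollary~\ref{cor2.4} from Theorem~\ref{thm2.start} and Theorem~\ref{thm2.3} in Section~\ref{sec02}, replacing the convergence-at-infinity results by their continuity counterparts listed earlier in this subsection. The two ingredients are the first theorem of this subsection, which states that $A(t)$ is continuous at $t_0$ if and only if $A(t)^c$ is continuous at $t_0$, and the theorem immediately preceding this corollary, which states that if a set-valued function is continuous at $t_0$, then for every $x$ in its value at $t_0$ there is a $\delta>0$ placing $x$ inside the function throughout the full neighborhood $\{s \in \mathbb{R} \mid |s-t_0|<\delta\}$.

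First I would observe that, since $A(t)$ is continuous at $t_0$, the complement theorem yields that $A(t)^c$ is also continuous at $t_0$. Next I would apply the preceding theorem to the set-valued function $A(t)^c$ in place of $A(t)$. The value of this function at $t_0$ is $A(t_0)^c$, so the conclusion of that theorem reads: for all $x \in A(t_0)^c$, there exists $\delta>0$ such that for all $t \in \{s \in \mathbb{R} \mid |s-t_0|<\delta\}$ the element $x$ belongs to the function $A(t)^c$. Since the function in question is precisely $A(t)^c$, this says $x \in A(t)^c$, which is exactly the assertion of the corollary.

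The only point requiring care is bookkeeping: one must keep straight that applying the preceding theorem to $A(t)^c$ produces conclusions about $A(t)^c$ and about its value $A(t_0)^c$, not about $A(t)$ or $A(t_0)$. There is no genuine obstacle here; the corollary is an immediate two-step consequence of the two cited facts, and no new estimate on $\delta$ or on symmetric differences is needed. For completeness I would note the alternative direct route through Theorem~\ref{thm4.ns}: that theorem provides, for every $x\in X$, a $\delta>0$ with $x \in \{A(t)\triangle A(t_0)\}^c$ throughout the neighborhood, and since $A(t_0)^c \subset X$ together with the inclusion $\{A(t)\triangle A(t_0)\}^c \cap A(t_0)^c \subset A(t)^c$, restricting attention to $x \in A(t_0)^c$ yields the claim. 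This second route parallels the proof of Theorem~\ref{thm2.3}, but the complement argument is shorter and I would present it as the main proof.
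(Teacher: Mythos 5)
Your proposal is correct and matches the paper's intended argument: the paper does not write out a proof for this corollary, stating instead that the results of Subsection~\ref{sec4.thm} are proved ``in almost all the same way'' as those of Subsection~\ref{sec2.thm}, where the analogous Corollary~\ref{cor2.4} is proved exactly by your main route --- pass to $A(t)^c$ via the complement theorem, then apply the membership theorem (the analogue of Theorem~\ref{thm2.3}) to the complement, whose value at $t_0$ is $A(t_0)^c$. Your bookkeeping remark and the alternative route via Theorem~\ref{thm4.ns} are both sound, so nothing is missing.
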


\begin{thm}  
Let $A(t)$ be a set-valued function that is continuous at
$t_0$, and let $B$ be a set with $B \not\subset A(t_0)$.
Then, for some $x \in B$ and for all $\delta >0$,
there exists $t_j \in
\{ s \in \mathbb{R} \bigm| |s - t_0| < \delta \}$ such that we have $x \in A(t_j)^c$.
\end{thm}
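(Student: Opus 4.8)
The plan is to follow the proof of Theorem~\ref{thm2.5} essentially line for line, translating the ``eventually large $t$'' language appropriate to $t \to \infty$ (tails of the form $t \ge t_1$) into the local language appropriate to continuity (neighborhoods $\{ s \in \mathbb{R} \mid |s - t_0| < \delta \}$, which here include $t_0$ itself), and replacing Corollary~\ref{cor2.4} by its continuity counterpart, namely the corollary immediately preceding this theorem.

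First I would invoke that corollary: since $A(t)$ is continuous at $t_0$, for all $x \in A(t_0)^c$ there exists $\delta > 0$ such that for all $t$ with $|t - t_0| < \delta$ we have $x \in A(t)^c$. Next I would restrict attention to $A(t_0)^c \cap B$. Because $A(t_0)^c \cap B$ is a subset of $A(t_0)^c$, every element of it inherits the property just stated; that is, for all $x \in A(t_0)^c \cap B$ there exists $\delta > 0$ such that $x \in A(t)^c$ for all $t$ with $|t - t_0| < \delta$.

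The crucial nonemptiness step comes next: since $B \not\subset A(t_0)$, there is an element lying in both $B$ and $A(t_0)^c$, so $A(t_0)^c \cap B \neq \emptyset$. Choosing such an $x$, I obtain that for \emph{some} $x \in B$ there exists $\delta_1 > 0$ with $x \in A(t)^c$ for every $t$ satisfying $|t - t_0| < \delta_1$.

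The last step --- the only place where any genuine logical translation is required --- is to pass from this ``there is a single good $\delta_1$'' statement to the desired ``for all $\delta > 0$ there is a witness $t_j$'' statement. Given an arbitrary $\delta > 0$, I would set $\delta' = \min\{\delta, \delta_1\}$ and take any $t_j$ with $|t_j - t_0| < \delta'$ (for instance $t_j = t_0$, which is admissible because the neighborhood is not punctured and $x \in A(t_0)^c$); then $|t_j - t_0| < \delta_1$ forces $x \in A(t_j)^c$, while $|t_j - t_0| < \delta$ places $t_j$ in the required neighborhood. This gives exactly the conclusion. I expect no real obstacle: the principal content is supplied by the preceding corollary together with the nonemptiness of $A(t_0)^c \cap B$, and the final quantifier manipulation is if anything simpler than in Theorem~\ref{thm2.5}, since the neighborhoods here already contain $t_0$ rather than forcing the witnesses to range over arbitrarily large reals.
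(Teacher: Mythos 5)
Your proof is correct and is precisely the adaptation the paper intends: Subsection~\ref{sec4.thm} lists its theorems without proof, saying they follow ``in almost all the same way'' as Subsection~\ref{sec2.thm}, and your argument mirrors the proof of Theorem~\ref{thm2.5} step by step, substituting the continuity counterpart of Corollary~\ref{cor2.4}, using $B \not\subset A(t_0)$ to get nonemptiness of $A(t_0)^c \cap B$, and then performing the final quantifier translation. Your observation that $t_j = t_0$ itself can serve as the witness (since the neighborhoods here are unpunctured and $x \in A(t_0)^c$) is a valid, slight simplification of that last step.
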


\begin{thm} 
Let $A(t)$ and $B(t)$ be set-valued functions that are continuous
at $t_0$. Then, the following hold:
\begin{enumerate}[$(1)$]
\item $A(t) \cup B(t)$ is continuous at $t_0$.
\item $A(t) \cap B(t)$ is continuous at $t_0$.
\end{enumerate}
\end{thm}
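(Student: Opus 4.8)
The plan is to mirror the proof of Theorem~\ref{thm2.wakyo}, replacing the characterization of convergence at infinity (Theorem~\ref{thm2.8}) by the corresponding characterization of continuity at a point, Theorem~\ref{thm4.ns}, and replacing the threshold ``$t \ge t_0$'' by the neighbourhood condition ``$|t - t_0| < \delta$''. For part $(1)$, I would first invoke Theorem~\ref{thm4.ns} twice: continuity of $A(t)$ at $t_0$ gives, for every $x \in X$, some $\delta_1 > 0$ with $x \in \{ A(t) \triangle A(t_0) \}^c$ whenever $|t - t_0| < \delta_1$, and continuity of $B(t)$ gives some $\delta_2 > 0$ with $x \in \{ B(t) \triangle B(t_0) \}^c$ whenever $|t - t_0| < \delta_2$. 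Setting $\delta = \min \{ \delta_1, \delta_2 \}$, every element of $X$ lies in both complements simultaneously for all $t$ with $|t - t_0| < \delta$, hence in $\{ A(t) \triangle A(t_0) \}^c \cap \{ B(t) \triangle B(t_0) \}^c$.

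Next I would rewrite this intersection as $\bigl[ \{ A(t) \triangle A(t_0) \} \cup \{ B(t) \triangle B(t_0) \} \bigr]^c$ and bring in property $(3)$ of the symmetric difference from the Settings, namely $\{ A(t) \cup B(t) \} \triangle \{ A(t_0) \cup B(t_0) \} \subset \{ A(t) \triangle A(t_0) \} \cup \{ B(t) \triangle B(t_0) \}$. Taking complements reverses the inclusion, so $\{ A(t) \triangle A(t_0) \}^c \cap \{ B(t) \triangle B(t_0) \}^c$ is contained in $\bigl[ \{ A(t) \cup B(t) \} \triangle \{ A(t_0) \cup B(t_0) \} \bigr]^c$. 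Consequently every $x \in X$ lies in the latter set for all $|t - t_0| < \delta$. Since $A(t_0) \cup B(t_0)$ is exactly the value at $t_0$ of the set-valued function $A(t) \cup B(t)$, Theorem~\ref{thm4.ns} then yields that $A(t) \cup B(t)$ is continuous at $t_0$.

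For part $(2)$ the plan is to reduce to part $(1)$ by De~Morgan duality. By the first theorem of this subsection (continuity of $A(t)$ at $t_0$ is equivalent to continuity of $A(t)^c$ at $t_0$), both $A(t)^c$ and $B(t)^c$ are continuous at $t_0$; part $(1)$ then shows $A(t)^c \cup B(t)^c$ is continuous at $t_0$. Rewriting $A(t)^c \cup B(t)^c = [ A(t) \cap B(t) ]^c$ and applying the complement theorem once more converts continuity of $[ A(t) \cap B(t) ]^c$ back to continuity of $A(t) \cap B(t)$.

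I expect no serious obstacle, since the argument is a routine transcription of Theorem~\ref{thm2.wakyo}. The one point demanding care is bookkeeping about the ``base'' value: Theorem~\ref{thm4.ns} must be applied to the set-valued function $A(t) \cup B(t)$ with its own value $A(t_0) \cup B(t_0)$ at $t_0$, so I would make explicit that evaluating $A(t) \cup B(t)$ at $t_0$ gives $A(t_0) \cup B(t_0)$ before invoking the criterion. The other mild subtlety is the direction of the inclusion after complementation, which must be tracked carefully so that the intersection of complements is shown to be a subset, not a superset, of $\bigl[ \{ A(t) \cup B(t) \} \triangle \{ A(t_0) \cup B(t_0) \} \bigr]^c$.
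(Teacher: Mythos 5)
Your proposal is correct and is exactly the argument the paper intends: the paper gives no explicit proof here, stating instead that the theorems of Subsection~\ref{sec4.thm} follow ``in almost all the same way'' as those of Subsection~\ref{sec2.thm}, and your proof is the faithful transcription of the proof of Theorem~\ref{thm2.wakyo}, with Theorem~\ref{thm4.ns} replacing Theorem~\ref{thm2.8} and $\min\{\delta_1,\delta_2\}$ replacing $\max\{t_1,t_2\}$. Both the direction of the inclusion after complementation and the reduction of part $(2)$ to part $(1)$ via the complement theorem and De~Morgan's law match the paper's template.
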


\begin{cor}
Let $n$ be a positive integer.
If a set-valued function $A_i(t)$ is continuous
at $t_0$ for each $1 \le i \le n$,
then the following hold:
\begin{enumerate}[$(1)$]
\item
{\scriptsize $\displaystyle \bigcup_{1 \le i \le n}$} $A_i(t)$
is continuous at $t_0$.
\item
{\scriptsize $\displaystyle \bigcap_{1 \le i \le n}$} $A_i(t)$
is continuous at $t_0$.
\end{enumerate}
\end{cor}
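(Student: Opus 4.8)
The plan is to mirror exactly the proof of the analogous corollary to Theorem~\ref{thm2.wakyo} in Subsection~\ref{sec2.thm}: I would carry out an induction on $n$ and feed each step into the preceding binary theorem asserting that $A(t)\cup B(t)$ and $A(t)\cap B(t)$ are continuous at $t_0$ whenever $A(t)$ and $B(t)$ are. I would prove part $(1)$ in full and then remark that part $(2)$ follows by the same argument with intersections in place of unions.

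First I would dispose of the base case: for $n=1$ the function $\bigcup_{1\le i\le 1}A_i(t)=A_1(t)$ is continuous at $t_0$ by hypothesis, so there is nothing to prove. Next I would set up the inductive step. Assuming the claim holds for $n=k-1$, I take $k$ set-valued functions each continuous at $t_0$, single out one of them — relabelling so that it is $A_k(t)$ — and apply the induction hypothesis to the remaining $k-1$ functions to conclude that $A_1(t)\cup\cdots\cup A_{k-1}(t)$ is continuous at $t_0$.

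The key observation that closes the induction is the associativity identity
\[
\bigcup_{1\le i\le k}A_i(t)=\{A_1(t)\cup\cdots\cup A_{k-1}(t)\}\cup A_k(t),
\]
which exhibits $\bigcup_{1\le i\le k}A_i(t)$ as the union of just two set-valued functions: the function $A_1(t)\cup\cdots\cup A_{k-1}(t)$, continuous by the induction hypothesis, and the continuous function $A_k(t)$. Applying the binary continuity theorem to this pair shows that $\bigcup_{1\le i\le k}A_i(t)$ is continuous at $t_0$, and since the initial choice of which function to call $A_k(t)$ was arbitrary, the conclusion is independent of the ordering. By the principle of induction this establishes part $(1)$, and part $(2)$ is obtained verbatim by replacing every union with an intersection and invoking the intersection half of the binary theorem.

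I do not expect a genuine obstacle, since this is a routine induction identical in form to the one already given for convergence at infinity; the only point requiring care is the bookkeeping that lets the induction hypothesis be applied to the grouped function $A_1(t)\cup\cdots\cup A_{k-1}(t)$, together with the observation that this grouped object is itself a legitimate set-valued function continuous at $t_0$, so that the binary theorem applies. If one preferred to avoid the binary theorem, an alternative would be to argue directly from Theorem~\ref{thm4.ns}: for each $i$ choose a $\delta_i$ witnessing $x\in\{A_i(t)\triangle A_i(t_0)\}^c$ near $t_0$, set $\delta=\min_i\delta_i$, and use the inclusion $\{\bigcup_i A_i(t)\}\triangle\{\bigcup_i A_i(t_0)\}\subset\bigcup_i\{A_i(t)\triangle A_i(t_0)\}$ furnished by the generalization of property $(3)$ of symmetric difference. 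The inductive route is shorter, however, and matches the paper's style.
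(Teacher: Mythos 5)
Your proposal is correct and is essentially the paper's own argument: the paper omits proofs in Subsection~\ref{sec4.thm} precisely because they repeat those of Subsection~\ref{sec2.thm}, and its proof of the corresponding corollary there is exactly your induction on $n$ — base case $n=1$, regroup $\bigcup_{1\le i\le k}A_i(t)$ as a union of the two set-valued functions $A_1(t)\cup\dots\cup A_{k-1}(t)$ and $A_k(t)$, apply the binary theorem (the analogue of Theorem~\ref{thm2.wakyo}), note the arbitrariness of the labelling, and handle intersections the same way. Your alternative via Theorem~\ref{thm4.ns} and the symmetric-difference inclusion is also sound but unnecessary, as you say.
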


\begin{thm} 
Let $A(t)$ be a set-valued function that is continuous at
$t_0$, and let $B$ be a set such that for all $x \in B$,
there exists $\delta >0$ such that for all $t \in \{ s
\in \mathbb{R} \bigm| |s - t_0| < \delta \}$
we have $x \in A(t)$.
Then, $B \subset A(t_0)$.
\end{thm}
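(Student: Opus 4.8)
The plan is to mirror the proof of Theorem~\ref{cor2.6}, the convergence-at-infinity counterpart, replacing its single appeal to Theorem~\ref{thm2.5} by the continuity analogue of that theorem stated just above in this subsection (the one asserting that if $A(t)$ is continuous at $t_0$ and $B \not\subset A(t_0)$, then for some $x \in B$ and for all $\delta > 0$ there exists $t_j \in \{ s \in \mathbb{R} \bigm| |s - t_0| < \delta \}$ with $x \in A(t_j)^c$). First I would argue by contradiction, assuming $B \not\subset A(t_0)$. Since $A(t)$ is continuous at $t_0$ by hypothesis, that preceding theorem applies immediately and yields the existence of an $x \in B$ such that, no matter how small $\delta > 0$ is chosen, the neighborhood $\{ s \in \mathbb{R} \bigm| |s - t_0| < \delta \}$ contains a point $t_j$ at which $x \notin A(t_j)$.

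Next I would observe that this conclusion is exactly the logical negation of the hypothesis imposed on $B$ in the present theorem, namely that for all $x \in B$ there exists $\delta > 0$ such that for all $t \in \{ s \in \mathbb{R} \bigm| |s - t_0| < \delta \}$ we have $x \in A(t)$. Negating that statement flips the quantifier on $x$ to ``for some,'' the quantifier on $\delta$ to ``for all,'' and the innermost clause ``for all $t$, $x \in A(t)$'' to ``there exists $t_j$, $x \in A(t_j)^c$''—which is precisely what the preceding theorem delivers. The two statements cannot both hold, so the assumption $B \not\subset A(t_0)$ is untenable, and I conclude $B \subset A(t_0)$.

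The step demanding the most care is confirming that the neighborhoods line up verbatim. Definition~\ref{def4.conti} works with the full (non-punctured) neighborhood $\{ s \in \mathbb{R} \bigm| |s - t_0| < \delta \}$ of $t_0$ appropriate to continuity, and both the hypothesis on $B$ and the continuity analogue of Theorem~\ref{thm2.5} are phrased over that same non-punctured set. Because all three statements quantify over identical neighborhoods, the quantifier structures match exactly and no special treatment of the point $t = t_0$ is required. The anticipated obstacle is therefore not computational at all but purely structural: one must check that the negation of the hypothesis on $B$ coincides term for term with the conclusion of the invoked theorem, which the verbatim agreement of the neighborhoods makes routine.
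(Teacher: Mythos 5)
Your proposal is correct and matches exactly what the paper intends: the paper gives no explicit proof for this theorem (Subsection~\ref{sec4.thm} only lists theorems, noting they are proved ``in almost all the same way'' as Subsection~\ref{sec2.thm}), and its model proof of Theorem~\ref{cor2.6} is precisely the contradiction argument you reproduce, with Theorem~\ref{thm2.5} replaced by its continuity analogue stated earlier in the same subsection. Your additional check that all three statements use the same non-punctured neighborhood $\{ s \in \mathbb{R} \bigm| |s - t_0| < \delta \}$ is the right point of care and it goes through.
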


\begin{thm} \label{thm4.const}
Let $A(t)$ be a set-valued function.
If there exists $\delta > 0$ such that $A(t_1) = A(t_2)$ for all
$t_1,t_2 \in \{ s
\in \mathbb{R} \bigm| |s - t_0| < \delta \}$
with $t_1 \neq t_2$,
then $A(t)$ is continuous at $t_0$.
\end{thm}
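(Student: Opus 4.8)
The plan is to reduce the statement to the necessary-and-sufficient criterion Theorem~\ref{thm4.ns}, in exactly the way that Theorem~\ref{thm2.16} and Theorem~\ref{thm3.const} dispatch their analogous ``locally constant implies convergent'' claims. That criterion says $A(t)$ is continuous at $t_0$ if and only if for all $x \in X$ there exists $\delta > 0$ such that $x \in \{ A(t) \triangle A(t_0) \}^c$ for all $t \in \{ s \in \mathbb{R} \mid |s - t_0| < \delta \}$. So it suffices to exhibit, for each $x \in X$, a suitable $\delta$; the locally constant hypothesis will hand us one directly.

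First I would extract from the hypothesis that $A(t) = A(t_0)$ on the whole neighborhood, not merely that $A$ is equal to some common constant there. The key observation is that $t_0$ itself belongs to the set $\{ s \in \mathbb{R} \mid |s - t_0| < \delta \}$, since $|t_0 - t_0| = 0 < \delta$. Hence, taking one of the two arguments in the hypothesis to be $t_0$ and the other to be any $t \neq t_0$ in the neighborhood, we get $A(t) = A(t_0)$; and for $t = t_0$ this is trivial. Therefore $A(t) = A(t_0)$ for every $t$ with $|t - t_0| < \delta$.

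It follows that $A(t) \triangle A(t_0) = \emptyset$, and thus $\{ A(t) \triangle A(t_0) \}^c = X$, for all such $t$. Consequently, for an arbitrary element $x$ of the universe $X$, the very same $\delta$ supplied by the hypothesis works: for all $t \in \{ s \in \mathbb{R} \mid |s - t_0| < \delta \}$ we have $x \in X = \{ A(t) \triangle A(t_0) \}^c$. Since $x \in X$ was arbitrary, Theorem~\ref{thm4.ns} yields that $A(t)$ is continuous at $t_0$, which is Definition~\ref{def4.conti}.

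I do not expect any genuine obstacle here; the argument is routine. The only point warranting a word of care is that the continuity setting uses the full neighborhood $\{ s \mid |s - t_0| < \delta \}$ rather than the punctured one $\{ s \mid 0 < |s - t_0| < \delta \}$ appearing in the convergence-at-a-point definitions. This difference is harmless precisely because the locally constant hypothesis includes the point $t_0$ itself, so the constancy $A(t) = A(t_0)$ covers $t = t_0$ as well and the emptiness of $A(t) \triangle A(t_0)$ holds across the entire unpunctured neighborhood.
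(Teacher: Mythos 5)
Your proof is correct and follows exactly the route the paper intends: the paper leaves Theorem~\ref{thm4.const} unproved, referring to the proofs of Subsection~\ref{sec2.thm}, and its template there (Theorem~\ref{thm2.16}) argues precisely as you do --- local constancy gives $A(t) \triangle A(t_0) = \emptyset$, hence $\{A(t) \triangle A(t_0)\}^c = X$, and the criterion of Theorem~\ref{thm4.ns} applies. Your extra remark that $t_0$ lies in the unpunctured neighborhood, so the hypothesis really does force $A(t) = A(t_0)$ there, is a careful touch the paper's template did not need but which is exactly the right point to check when adapting it to the continuity setting.
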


\vspace*{1\baselineskip}
We can adapt the above theorems to Definition~\ref{def4.left}
and~\ref{def4.right}.
The same results hold for one-sided continuity

Similarly to Theorem~\ref{thm3.coincide} in Subsection~\ref{sec3.thm},
we can prove the following theorem.

\begin{thm} \label{thm4.coincide}
Let $A(t)$ be a set-valued function.
Then, $\displaystyle \lim_{t \to t_0} A(t) = A(t_0)$
if and only if
$\displaystyle \lim_{t \to t_0 -0} A(t) = A(t_0)
= \displaystyle \lim_{t \to t_0 + 0} A(t)$.
\end{thm}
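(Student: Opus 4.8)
The plan is to mirror the proof of Theorem~\ref{thm3.coincide}, using Theorem~\ref{thm4.ns} as the main engine, since Theorem~\ref{thm4.ns} recasts continuity at $t_0$ as the single universe-level statement that for all $x \in X$ there exists $\delta > 0$ such that $x \in \{ A(t) \triangle A(t_0) \}^c$ for all $t$ in the relevant neighborhood. Throughout I would exploit the fact that $A(t_0) \triangle A(t_0) = \emptyset$, so that $\{ A(t_0) \triangle A(t_0) \}^c = X$; this means the point $t = t_0$ itself satisfies the defining inclusion automatically, which is why the continuity neighborhoods may harmlessly include $t_0$.

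For the forward direction, I would assume $\displaystyle \lim_{t \to t_0} A(t) = A(t_0)$ and apply Theorem~\ref{thm4.ns}: for all $x \in X$ there exists $\delta > 0$ such that for all $t \in \{ s \in \mathbb{R} \mid |s - t_0| < \delta \}$ we have $x \in \{ A(t) \triangle A(t_0) \}^c$. Since $\{ s \mid 0 \le t_0 - s < \delta \} \subset \{ s \mid |s - t_0| < \delta \}$, the same $\delta$ witnesses that for all $x \in X$ the inclusion $x \in \{ A(t) \triangle A(t_0) \}^c$ holds on the left neighborhood; adapting Theorem~\ref{thm4.ns} to Definition~\ref{def4.left} then gives $\displaystyle \lim_{t \to t_0 - 0} A(t) = A(t_0)$. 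The identical argument with $\{ s \mid 0 \le s - t_0 < \delta \}$ and Definition~\ref{def4.right} yields $\displaystyle \lim_{t \to t_0 + 0} A(t) = A(t_0)$.

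For the converse, I would assume both one-sided limits equal $A(t_0)$. Adapting Theorem~\ref{thm4.ns} to Definition~\ref{def4.left} produces, for each $x \in X$, a $\delta_1 > 0$ with $x \in \{ A(t) \triangle A(t_0) \}^c$ for all $t \in \{ s \mid 0 \le t_0 - s < \delta_1 \}$, and adapting it to Definition~\ref{def4.right} produces a $\delta_2 > 0$ with the same inclusion for all $t \in \{ s \mid 0 \le s - t_0 < \delta_2 \}$. Setting $\delta = \min \{ \delta_1, \delta_2 \}$ and using $\{ s \mid |s - t_0| < \delta \} = \{ s \mid 0 \le t_0 - s < \delta \} \cup \{ s \mid 0 \le s - t_0 < \delta \}$, the inclusion holds on the full two-sided neighborhood, so Theorem~\ref{thm4.ns} gives $\displaystyle \lim_{t \to t_0} A(t) = A(t_0)$.

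The argument is essentially bookkeeping, so there is no deep obstacle; the only point requiring care is the bookkeeping of the neighborhoods and the choice of $\delta = \min \{ \delta_1, \delta_2 \}$ so that a single radius serves both sides, exactly as in Theorem~\ref{thm3.coincide}.
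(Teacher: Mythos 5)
Your proposal is correct and takes essentially the same approach as the paper: the paper's proof consists of the single remark that Theorem~\ref{thm4.coincide} is proved like Theorem~\ref{thm3.coincide}, and your argument is precisely that adaptation, substituting Theorem~\ref{thm4.ns} for Theorem~\ref{thm3.ns} and the full (unpunctured) neighborhoods of the continuity definitions for the punctured ones. Your bookkeeping is sound, including the decomposition $\{ s \mid |s - t_0| < \delta \} = \{ s \mid 0 \le t_0 - s < \delta \} \cup \{ s \mid 0 \le s - t_0 < \delta \}$ and the choice $\delta = \min \{ \delta_1, \delta_2 \}$.
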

\begin{proof}
The proof of Theorem~\ref{thm4.coincide} is similar to
the proof of Theorem~\ref{thm3.coincide}.
\end{proof}

\section{Other Possibilities of Development}
\label{sec05}

Section~\ref{sec05} investigates other possibilities of development
in the study of set-valued functions.
Subsection~\ref{ES} presents a method to describe the behavior of
a set-valued function by bijection between two sets, \textit{Element Specification}.
Using this method, we define the differentiation of set-valued functions
in a Euclidean space
in Subsection~\ref{diffe}. In Subsection~\ref{multi},
we consider an extension to multivariable set-valued functions.
Finally, we adapt the convergence of set-valued functions at infinity
to sequences of sets in Subsection~\ref{adapt}.
This subsection investigates the equivalence to the equality of the
two limits of sequences of sets.

\subsection{Element Specification}
\label{ES}

In this subsection, we introduce one of the methods to describe the
behavior of a set-valued function, \textit{Element Specification}.
\footnote{
We derive the name ``Element Specification'' from
the \textit{specification} of the path of each changing
\textit{element}.
}
This is the method to specify the changing process of a set-valued function
by one-to-one correspondence.

Let an index $\lambda$ be an element of an index set $\Lambda$.
Then, we denote by $a_{\lambda}(t)$ an element of a set-valued function $A(t)$.
The element $a_{\lambda}(t)$ itself changes depending on the variable
$t \in \mathbb{R}$. For simplicity, suppose that
$a_{\lambda_i}(t) \neq a_{\lambda_j}(t)$ for all $\lambda_i, \lambda_j \in
\Lambda$ and for all $t \in \mathbb{R}$.
Element Specification defines the set-valued function $A(t)$ as the collection
of each changing element $a_{\lambda}(t)$, that is,
\begin{equation*}
A(t) = \{ a_{\lambda}(t) \in X \bigm| \lambda \in \Lambda \}.
\footnote{
Here, the equation
$A(t) = \{ a_{\lambda}(t) \in X \bigm| \lambda \in \Lambda \}$
can be also described 
as the union of each point set,
{\tiny $\displaystyle \bigcup_{\lambda \in \Lambda}$} \nolinebreak$\{a_{\lambda}(t)\}$.
}
\end{equation*}

Figure ~\ref{sec5_1_es1} sketches the changing process of each $a_{\lambda}(t)$.
Note that the cardinality of $A(t)$ coincides with that of $\Lambda$
and is constant for all $t \in \mathbb{R}$.
\footnote{
We owe this constancy to the assumption that
$a_{\lambda_i}(t) \neq a_{\lambda_j}(t)$ for all $\lambda_i, \lambda_j \in
\Lambda$ and for all $t \in \mathbb{R}$.
Without the assumption, we must say that
the cardinality of $A(t)$ is less than or equal to that of $\Lambda$.
This is because some element $a_{\lambda_i}(t_0)$ may be the same
as another element $a_{\lambda_j}(t_0)$ at some point $t_0$.
In that case,
the cardinality of $A(t)$ depends on the degree of the ``overlap"
among elements $a_{\lambda}(t)$.
}
Thus, there exists one-to-one correspondence between
$A(t_1)$ and $A(t_2)$ for all $t_1,t_2 \in \mathbb{R}$.
Here, each element $a_{\lambda}(t)$ constructs
a bijection between $A(t_1)$ and $A(t_2)$.

\begin{figure}
\centering
%WinTpicVersion4.28b
{\unitlength 0.1in
\begin{picture}( 36.2000, 14.0500)( 14.0000,-26.0000)
% VECTOR 2 0 3 0 Black White
% 2 4480 2400 5020 2400
% 
{\color[named]{Black}{%
\special{pn 8}%
\special{pa 4480 2400}%
\special{pa 5020 2400}%
\special{fp}%
\special{sh 1}%
\special{pa 5020 2400}%
\special{pa 4954 2380}%
\special{pa 4968 2400}%
\special{pa 4954 2420}%
\special{pa 5020 2400}%
\special{fp}%
}}%
% VECTOR 2 0 3 0 Black White
% 2 4480 2000 5020 2000
% 
{\color[named]{Black}{%
\special{pn 8}%
\special{pa 4480 2000}%
\special{pa 5020 2000}%
\special{fp}%
\special{sh 1}%
\special{pa 5020 2000}%
\special{pa 4954 1980}%
\special{pa 4968 2000}%
\special{pa 4954 2020}%
\special{pa 5020 2000}%
\special{fp}%
}}%
% VECTOR 2 0 3 0 Black White
% 2 4480 1600 5020 1600
% 
{\color[named]{Black}{%
\special{pn 8}%
\special{pa 4480 1600}%
\special{pa 5020 1600}%
\special{fp}%
\special{sh 1}%
\special{pa 5020 1600}%
\special{pa 4954 1580}%
\special{pa 4968 1600}%
\special{pa 4954 1620}%
\special{pa 5020 1600}%
\special{fp}%
}}%
% LINE 2 2 3 0 Black White
% 2 2210 1200 2210 2600
% 
{\color[named]{Black}{%
\special{pn 8}%
\special{pa 2210 1200}%
\special{pa 2210 2600}%
\special{dt 0.045}%
}}%
% LINE 2 2 3 0 Black White
% 2 3200 1200 3200 2600
% 
{\color[named]{Black}{%
\special{pn 8}%
\special{pa 3200 1200}%
\special{pa 3200 2600}%
\special{dt 0.045}%
}}%
% LINE 2 2 3 0 Black White
% 2 4200 1200 4200 2600
% 
{\color[named]{Black}{%
\special{pn 8}%
\special{pa 4200 1200}%
\special{pa 4200 2600}%
\special{dt 0.045}%
}}%
% STR 2 0 3 0 Black White
% 4 2200 1160 2200 1260 5 0 1 0
% $t_1$
\put(22.0000,-12.6000){\makebox(0,0){{\colorbox[named]{White}{$t_1$}}}}%
\put(22.0000,-12.6000){\makebox(0,0){{\colorbox[named]{White}{\color[named]{Black}{$t_1$}}}}}%
% STR 2 0 3 0 Black White
% 4 3200 1160 3200 1260 5 0 1 0
% $t_2$
\put(32.0000,-12.6000){\makebox(0,0){{\colorbox[named]{White}{$t_2$}}}}%
\put(32.0000,-12.6000){\makebox(0,0){{\colorbox[named]{White}{\color[named]{Black}{$t_2$}}}}}%
% STR 2 0 3 0 Black White
% 4 4200 1160 4200 1260 5 0 1 0
% $t_3$
\put(42.0000,-12.6000){\makebox(0,0){{\colorbox[named]{White}{$t_3$}}}}%
\put(42.0000,-12.6000){\makebox(0,0){{\colorbox[named]{White}{\color[named]{Black}{$t_3$}}}}}%
% STR 2 0 3 0 Black White
% 4 4200 1500 4200 1600 5 0 1 0
% $a_{\lambda_i}(t_3)$
\put(42.0000,-16.0000){\makebox(0,0){{\colorbox[named]{White}{$a_{\lambda_i}(t_3)$}}}}%
\put(42.0000,-16.0000){\makebox(0,0){{\colorbox[named]{White}{\color[named]{Black}{$a_{\lambda_i}(t_3)$}}}}}%
% STR 2 0 3 0 Black White
% 4 4200 1900 4200 2000 5 0 1 0
% $a_{\lambda_j}(t_3)$
\put(42.0000,-20.0000){\makebox(0,0){{\colorbox[named]{White}{$a_{\lambda_j}(t_3)$}}}}%
\put(42.0000,-20.0000){\makebox(0,0){{\colorbox[named]{White}{\color[named]{Black}{$a_{\lambda_j}(t_3)$}}}}}%
% STR 2 0 3 0 Black White
% 4 4200 2300 4200 2400 5 0 1 0
% $a_{\lambda_k}(t_3)$
\put(42.0000,-24.0000){\makebox(0,0){{\colorbox[named]{White}{$a_{\lambda_k}(t_3)$}}}}%
\put(42.0000,-24.0000){\makebox(0,0){{\colorbox[named]{White}{\color[named]{Black}{$a_{\lambda_k}(t_3)$}}}}}%
% VECTOR 2 0 3 0 Black White
% 2 2400 2400 2940 2400
% 
{\color[named]{Black}{%
\special{pn 8}%
\special{pa 2400 2400}%
\special{pa 2940 2400}%
\special{fp}%
\special{sh 1}%
\special{pa 2940 2400}%
\special{pa 2874 2380}%
\special{pa 2888 2400}%
\special{pa 2874 2420}%
\special{pa 2940 2400}%
\special{fp}%
}}%
% STR 2 0 3 0 Black White
% 4 2200 2300 2200 2400 5 0 1 0
% $a_{\lambda_k}(t_1)$
\put(22.0000,-24.0000){\makebox(0,0){{\colorbox[named]{White}{$a_{\lambda_k}(t_1)$}}}}%
\put(22.0000,-24.0000){\makebox(0,0){{\colorbox[named]{White}{\color[named]{Black}{$a_{\lambda_k}(t_1)$}}}}}%
% VECTOR 2 0 3 0 Black White
% 2 3400 2000 3960 2000
% 
{\color[named]{Black}{%
\special{pn 8}%
\special{pa 3400 2000}%
\special{pa 3960 2000}%
\special{fp}%
\special{sh 1}%
\special{pa 3960 2000}%
\special{pa 3894 1980}%
\special{pa 3908 2000}%
\special{pa 3894 2020}%
\special{pa 3960 2000}%
\special{fp}%
}}%
% VECTOR 2 0 3 0 Black White
% 2 3400 2400 3960 2400
% 
{\color[named]{Black}{%
\special{pn 8}%
\special{pa 3400 2400}%
\special{pa 3960 2400}%
\special{fp}%
\special{sh 1}%
\special{pa 3960 2400}%
\special{pa 3894 2380}%
\special{pa 3908 2400}%
\special{pa 3894 2420}%
\special{pa 3960 2400}%
\special{fp}%
}}%
% STR 2 0 3 0 Black White
% 4 3200 2300 3200 2400 5 0 1 0
% $a_{\lambda_k}(t_2)$
\put(32.0000,-24.0000){\makebox(0,0){{\colorbox[named]{White}{$a_{\lambda_k}(t_2)$}}}}%
\put(32.0000,-24.0000){\makebox(0,0){{\colorbox[named]{White}{\color[named]{Black}{$a_{\lambda_k}(t_2)$}}}}}%
% VECTOR 2 0 3 0 Black White
% 2 2400 1600 2940 1600
% 
{\color[named]{Black}{%
\special{pn 8}%
\special{pa 2400 1600}%
\special{pa 2940 1600}%
\special{fp}%
\special{sh 1}%
\special{pa 2940 1600}%
\special{pa 2874 1580}%
\special{pa 2888 1600}%
\special{pa 2874 1620}%
\special{pa 2940 1600}%
\special{fp}%
}}%
% STR 2 0 3 0 Black White
% 4 2200 1500 2200 1600 5 0 1 0
% $a_{\lambda_i}(t_1)$
\put(22.0000,-16.0000){\makebox(0,0){{\colorbox[named]{White}{$a_{\lambda_i}(t_1)$}}}}%
\put(22.0000,-16.0000){\makebox(0,0){{\colorbox[named]{White}{\color[named]{Black}{$a_{\lambda_i}(t_1)$}}}}}%
% VECTOR 2 0 3 0 Black White
% 2 2400 2000 2940 2000
% 
{\color[named]{Black}{%
\special{pn 8}%
\special{pa 2400 2000}%
\special{pa 2940 2000}%
\special{fp}%
\special{sh 1}%
\special{pa 2940 2000}%
\special{pa 2874 1980}%
\special{pa 2888 2000}%
\special{pa 2874 2020}%
\special{pa 2940 2000}%
\special{fp}%
}}%
% STR 2 0 3 0 Black White
% 4 2200 1900 2200 2000 5 0 1 0
% $a_{\lambda_j}(t_1)$
\put(22.0000,-20.0000){\makebox(0,0){{\colorbox[named]{White}{$a_{\lambda_j}(t_1)$}}}}%
\put(22.0000,-20.0000){\makebox(0,0){{\colorbox[named]{White}{\color[named]{Black}{$a_{\lambda_j}(t_1)$}}}}}%
% VECTOR 2 0 3 0 Black White
% 2 3400 1600 3960 1600
% 
{\color[named]{Black}{%
\special{pn 8}%
\special{pa 3400 1600}%
\special{pa 3960 1600}%
\special{fp}%
\special{sh 1}%
\special{pa 3960 1600}%
\special{pa 3894 1580}%
\special{pa 3908 1600}%
\special{pa 3894 1620}%
\special{pa 3960 1600}%
\special{fp}%
}}%
% STR 2 0 3 0 Black White
% 4 3200 1500 3200 1600 5 0 1 0
% $a_{\lambda_i}(t_2)$
\put(32.0000,-16.0000){\makebox(0,0){{\colorbox[named]{White}{$a_{\lambda_i}(t_2)$}}}}%
\put(32.0000,-16.0000){\makebox(0,0){{\colorbox[named]{White}{\color[named]{Black}{$a_{\lambda_i}(t_2)$}}}}}%
% STR 2 0 3 0 Black White
% 4 3200 1900 3200 2000 5 0 1 0
% $a_{\lambda_j}(t_2)$
\put(32.0000,-20.0000){\makebox(0,0){{\colorbox[named]{White}{$a_{\lambda_j}(t_2)$}}}}%
\put(32.0000,-20.0000){\makebox(0,0){{\colorbox[named]{White}{\color[named]{Black}{$a_{\lambda_j}(t_2)$}}}}}%
% VECTOR 2 0 3 0 Black White
% 2 1400 1600 1940 1600
% 
{\color[named]{Black}{%
\special{pn 8}%
\special{pa 1400 1600}%
\special{pa 1940 1600}%
\special{fp}%
\special{sh 1}%
\special{pa 1940 1600}%
\special{pa 1874 1580}%
\special{pa 1888 1600}%
\special{pa 1874 1620}%
\special{pa 1940 1600}%
\special{fp}%
}}%
% VECTOR 2 0 3 0 Black White
% 2 1400 2000 1940 2000
% 
{\color[named]{Black}{%
\special{pn 8}%
\special{pa 1400 2000}%
\special{pa 1940 2000}%
\special{fp}%
\special{sh 1}%
\special{pa 1940 2000}%
\special{pa 1874 1980}%
\special{pa 1888 2000}%
\special{pa 1874 2020}%
\special{pa 1940 2000}%
\special{fp}%
}}%
% VECTOR 2 0 3 0 Black White
% 2 1400 2400 1940 2400
% 
{\color[named]{Black}{%
\special{pn 8}%
\special{pa 1400 2400}%
\special{pa 1940 2400}%
\special{fp}%
\special{sh 1}%
\special{pa 1940 2400}%
\special{pa 1874 2380}%
\special{pa 1888 2400}%
\special{pa 1874 2420}%
\special{pa 1940 2400}%
\special{fp}%
}}%
\end{picture}}%
\caption{We list the changing of each element $a_{\lambda}(t)$
at $t_1,t_2$, and $t_3$.
The arrows indicate how $a_{\lambda}(t)$
changes depending on $t$. The set-valued function $A(t)$ is defined as
the collection of $a_{\lambda}(t)$.
Thus, the change of $a_{\lambda}(t)$ means the change of $A(t)$.}
\label{sec5_1_es1}
\end{figure}
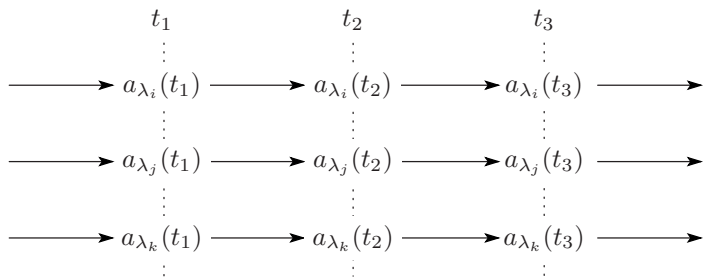%

Element Specification utilizes the property of cardinality that
an infinite set has a one-to-one correspondence
with its infinite proper-subset. This means that a set-valued function described
by Element Specification can change independently of containment
if its cardinality is infinite.

For example, consider the set-valued function
$B(t)= \{ (x,y) \in \mathbb{R}^2 \bigm|  x^2 + y^2 < t^2 \}$, which is defined for $t>0$.
We can describe $B(t)$ with Element Specification as follows:
\begin{align*}
B(t) &= \{ b_{p,q}(t) \in \mathbb{R}^2
\bigm| 0 \le p <1, \, 0 \le q < 2\pi \} \\
&= \{ (tp \cos q , tp \sin q) \in \mathbb{R}^2
\bigm| 0 \le p <1, \, 0 \le q < 2\pi \},
\end{align*}
where we define $B(t)$ for $t>0$.
Let $t_1, t_2$ with $t_1 < t_2$. We find a one-to-one correspondence
between $B(t_1)$ and $B(t_2)$, though $B(t_1) \subset B(t_2)$.
Figure~\ref{sec5_1_es2} pictures two sets $B(1)$ and $B(2)$
and gives an example on how each element $b_{p,q}(t)$ changes when we
specify the values $p$ and $q$.

\begin{figure}
\centering
\input{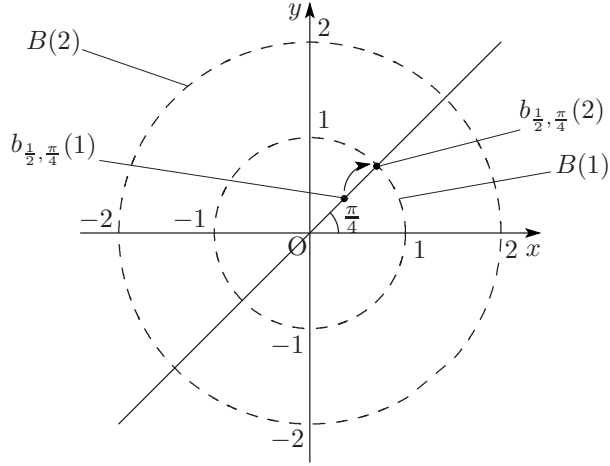}
\caption{
The sets $B(1)$ and $B(2)$ are pictured above.
By substituting the specific value of $p$ and $q$ in
$b_{p,q}(t) = (tp \cos q, tp \sin q)$,
we see how each element $b_{p,q}(t)$ changes
depending on $t > 0$.
The arrow in the figure indicates that the element $b_{\frac{1}{2},\frac{\pi}{4}}(1)
= (1/2\sqrt{2}, 1/2\sqrt{2})$ changes to the element
$b_{\frac{1}{2},\frac{\pi}{4}}(2) = (1/\sqrt{2}, 1/\sqrt{2})$ at $t=2$.
}
\label{sec5_1_es2}
\end{figure}

\subsection{Differentiation of Set-Valued Functions in a Euclidean Space}
\label{diffe}

In this Subsection, we define the differentiation of set-valued functions in a Euclidean space
by using Element Specification. In advance, we note that the
differentiation introduced here is not general for two reasons.
First, we differentiate set-valued functions that are described by Element
Specification. Second, we differentiate set-valued functions in a Euclidean space.

We start by setting a set-valued function used in this subsection.
Let $A(t) \subset \mathbb{R}^n$ be a set-valued function with
Element Specification. Then, we set an index $\lambda$
an element of an index set $\Lambda$,
and define the set-valued function $A(t)$ as
\begin{align*}
A(t) &= \{ a_\lambda(t) \in \mathbb{R}^n \bigm| \lambda \in \Lambda \} \\
&= \{ \, \big(a^1_{\lambda}(t), a^2_{\lambda}(t), \dots,
a^n_{\lambda}(t)\big) \in \mathbb{R}^n \bigm| \lambda \in \Lambda \, \}.
\end{align*}
Suppose that $a^i_{\lambda}(t)$ is differentiable at $t$ for each
$1 \le i \le n$. Then, the differentiation of $A(t)$ at $t$,
denoted $\displaystyle \frac{dA(t)}{dt}$, is defined by
\begin{align*}
\frac{dA(t)}{dt} &= \biggl\{ \frac{d a_\lambda(t)}{dt} \in \mathbb{R}^n
\biggm| \lambda \in \Lambda \biggr\} \\
&= \biggl\{ \left(
\frac{d a^1_\lambda(t)}{dt}, \frac{d a^2_\lambda(t)}{dt},
\dots, \frac{d a^n_\lambda(t)}{dt}
\right) \in \mathbb{R}^n
\biggm| \lambda \in \Lambda \biggr\}.
\end{align*}

We next define the addition of $A(t)$ and $\displaystyle \frac{dA(t)}{dt}$,
denoted by $A(t) + \displaystyle \frac{dA(t)}{dt}$, as follows:
\begin{align*}
A(t) + \displaystyle \frac{dA(t)}{dt}
&= \biggl\{ a_\lambda(t) + \frac{d a_\lambda(t)}{dt} \in \mathbb{R}^n
\biggm| \lambda \in \Lambda \biggr\} \\
&= \biggl\{ \left( a^1_{\lambda}(t) +
\frac{d a^1_\lambda(t)}{dt},
\dots, a^n_{\lambda}(t) + \frac{d a^n_\lambda(t)}{dt}
\right) \in \mathbb{R}^n
\biggm| \lambda \in \Lambda \biggr\}
\end{align*}
Now consider the set-valued function $B(t)$ given by
\begin{align*}
B(t) &= \{ b_{p,q}(t) \in \mathbb{R}^2
\bigm| 0 \le p <1, \, 0 \le q < 2\pi \} \\
&= \{ (tp \cos q , tp \sin q) \in \mathbb{R}^2
\bigm| 0 \le p <1, \, 0 \le q < 2\pi \},
\end{align*}
where we define $B(t)$ for $t>0$.
By definition, we have
\begin{align*}
\frac{d b_{p,q}(t)}{dt}
&= \left( \frac{d}{dt}(tp \cos q), \frac{d}{dt}(tp \sin q) \right) \\
&= (p \cos q, p \sin q).
\end{align*}
Hence, we get
\begin{align*}
\frac{dB(t)}{dt} &= \biggl\{ \frac{d b_{p,q}(t)}{dt}
\in \mathbb{R}^2 \biggm|  0 \le p <1, \, 0 \le q < 2\pi \biggr\} \\
&= \biggl\{ (p \cos q, p \sin q)
\in \mathbb{R}^2 \biggm|  0 \le p <1, \, 0 \le q < 2\pi \biggr\}.
\end{align*}
Using this, we obtain the addition of $B(t)$ and $\frac{d B(t)}{dt}$,
that is,
\begin{align*}
B(t) + \frac{d B(t)}{dt}
&= \biggl\{ 
b_{p,q}(t) + \frac{d b_{p,q}(t)}{dt} \in \mathbb{R}^2
\biggm|  0 \le p <1, \, 0 \le q < 2\pi
\biggr\} \\
&= \biggl\{ (tp \cos q, tp \sin q) +  (p \cos q, p \sin q) \in \mathbb{R}^2
\biggm| 0 \le p <1, \, 0 \le q < 2\pi \biggr\} \\
&= \biggl\{ \big( (t+1)p \cos q, (t+1)p \sin q \big) \in \mathbb{R}^2
\biggm| 0 \le p <1, \, 0 \le q < 2\pi \biggr\} \\
&= \biggl\{ 
b_{p,q}(t+1) \in \mathbb{R}^2
\biggm|  0 \le p <1, \, 0 \le q < 2\pi
\biggr\} \\
&= B(t+1).
\end{align*}
Here, $dB(t)/dt$ means the difference between $B(t)$ and $B(t+1)$.
\footnote{
Naturally, the differentiation of set-valued functions does not necessarily
mean the difference between a set-valued function at $t$ and at $t+1$.
}
We can interpret the differentiation of set-valued functions as the change
in a set-valued function per infinitesimal change in a variable.

\subsection{Extension to Multivariable Set-Valued Functions}
\label{multi}

Subsection~\ref{multi} gives an example of an extension
to multivariable set-valued functions. We consider $n$-variable set-valued functions
in this subsection.

Let $\mathbf{t} = (t^1, \dots, t^n)$ and denote by $A(\mathbf{t})$
a $n$-variable set-valued function.
If we set the distance between points $\mathbf{t_i}$ and $\mathbf{t_j}$,
we can extend Definition~\ref{def3.conver} and~\ref{def4.conti}
to $n$-variable set-valued functions.

Here, we use the Euclidean distance. That is,
\begin{equation*}
\| \mathbf{t}_i - \mathbf{t}_j \|
= \sqrt{\smash[b]{(t^1_i - t^1_j)^2 +
\dotsb + (t^n_i - t^n_j)}},
\end{equation*}
where $\mathbf{t}_i = (t^1_i, \dots , t^n_i )$ and
$\mathbf{t}_j = (t^1_j, \dots , t^n_j )$.
Then, we define the convergence of $n$-variable set-valued functions at a point
as follows:
\begin{defi} \label{def5.conver}
Let $A(t)$ be a set-valued function that depends on the variable $\mathbf{t} \in \mathbb{R}^n$,
and let $A$ be a set. We say that $A(t)$ converges to $A$
as $\mathbf{t} \to \mathbf{t}_0$ if
for all $\mathbf{t}_i \in \mathbb{R}\,\, (\mathbf{t}_i \neq \mathbf{t}_0)$
and for all $x \in A(\mathbf{t}_i) \triangle A$,
there exists $\delta >0$ such that for all $\mathbf{t} \in \{ \mathbf{s}
\in \mathbb{R}^n \bigm| 0< \| \mathbf{s} - \mathbf{t}_0 \| < \delta \}$
we have  $x \not\in A(\mathbf{t}) \triangle A$.
\end{defi}
Similarly, we define the continuity of $n$-variable set-valued functions at a point
as follows:
\begin{defi} \label{def5.conti}
Let $A(t)$ be a set-valued function that depends on the variable $\mathbf{t} \in \mathbb{R}^n$.
We say that $A(t)$ is continuous at $\mathbf{t}_0$ if
for all $\mathbf{t}_i \in \mathbb{R}$
and for all $x \in A(\mathbf{t}_i) \triangle A(\mathbf{t}_0)$,
there exists $\delta >0$ such that for all $\mathbf{t} \in \{ \mathbf{s}
\in \mathbb{R}^n \bigm| \| \mathbf{s} - \mathbf{t}_0 \| < \delta \}$
we have  $x \not\in A(\mathbf{t}) \triangle A(\mathbf{t}_0)$.
\end{defi}
We can derive similar results to those of Subsection~\ref{sec3.thm}
and~\ref{sec4.thm} in almost all the same way.

\subsection{Adaption to Sequences of Sets} 
\label{adapt}

Adapting Definition~\ref{def2.conver} to sequences of sets,
we can define their convergence
away from the equality of the superior limit
and the inferior limit.
We formulate it as follows:
\begin{defi} \label{def5.adapt}
Let $\{ A_n \}_{n=1}^{\infty}$ be a sequence of sets
and let $A$ be a set. We say that $\{ A_n \}_{n=1}^{\infty}$ converges to $A$ if
for all $n_i \in \mathbb{N}$ and for all $x \in A_{n_i} \triangle A$,
there exists $n_j \in \mathbb{N}$ such that for all $n \ge n_j$
we have  $x \not\in A_n \triangle A$.
%and we write $\displaystyle \lim_{n \to \infty}A_n = A$.
\end{defi}

We can prove that Definition~\ref{def5.adapt} is equivalent
to the equality of the superior limit and inferior limit
of the sequence $\{ A_n \}_{n=1}^{\infty}$.
To prove the equivalence, we adapt theorems in Subsection~\ref{sec2.thm}
to sequences of sets and get:
\footnote{
Be careful that the convergence mentioned in the theorems below
is based on Definition~\ref{def5.adapt}, not on the equality.
}

\begin{thm} \label{thm5.3} %%%%%%%%%%%%%%%%%%%%%%%%%%%%%%%%%%% Theorem
Let $\{ A_n \}_{n=1}^{\infty}$ be a sequence of sets and $A$ a set. Then,
$\{ A_n \}_{n=1}^{\infty}$ converges to $A$ if and only if
for all $x \in X$, there exists $n_0 \in \mathbb{N}$ such that
for all $n \ge n_0$ we have $x \in \{ A_n \triangle A \}^c$.
\end{thm}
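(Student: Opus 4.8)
The plan is to prove Theorem~\ref{thm5.3} by transcribing the proof of Theorem~\ref{thm2.8} into the discrete setting, replacing the real variable $t$ with the index $n \in \mathbb{N}$, the reals $\mathbb{R}$ with the naturals $\mathbb{N}$, and each threshold $t_j$ with a threshold $n_j$. The two ingredients are Definition~\ref{def5.adapt} and the sequence analogue of Theorem~\ref{thm2.2}, namely: if $\{A_n\}_{n=1}^{\infty}$ converges to $A$, then for all $n_i \in \mathbb{N}$ and all $x \in \{A_{n_i} \triangle A\}^c$ there exists $n_j \in \mathbb{N}$ such that $x \in \{A_n \triangle A\}^c$ for all $n \ge n_j$. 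First I would record this analogue, whose proof copies that of Theorem~\ref{thm2.2} verbatim: pick $x \in \{A_{n_i} \triangle A\}^c$ and split into the case where $x \in A_{n_k} \triangle A$ for some index $n_k$, handled by Definition~\ref{def5.adapt} applied at $n_k$, and the case where $x \in \{A_n \triangle A\}^c$ for every $n$, which trivially supplies any threshold.

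For the forward implication I would assume $\{A_n\}_{n=1}^{\infty}$ converges to $A$. Definition~\ref{def5.adapt} gives, for each fixed $n_i$ and each $x \in A_{n_i} \triangle A$, a threshold $n_{j_1}$ beyond which $x \in \{A_n \triangle A\}^c$; the analogue of Theorem~\ref{thm2.2} gives, for each $x \in \{A_{n_i} \triangle A\}^c$, a threshold $n_{j_2}$ with the same conclusion. Fixing one index $n_i$ and setting $n_0 = \max\{n_{j_1}, n_{j_2}\}$, every element of both $A_{n_i} \triangle A$ and its complement lands in $\{A_n \triangle A\}^c$ for all $n \ge n_0$. Because $\{A_{n_i} \triangle A\} \cup \{A_{n_i} \triangle A\}^c = X$, this says exactly that for all $x \in X$ there is $n_0 \in \mathbb{N}$ with $x \in \{A_n \triangle A\}^c$ for all $n \ge n_0$; the quantifier over $n_i$ drops out since $X$ does not depend on $n_i$.

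For the converse I would assume the stated condition and note that $A_{n_i} \triangle A \subset X$ for every $n_i$; hence every element of $A_{n_i} \triangle A$ inherits a threshold $n_j$ beyond which it lies in $\{A_n \triangle A\}^c$, which is precisely Definition~\ref{def5.adapt}, so $\{A_n\}_{n=1}^{\infty}$ converges to $A$. There is no genuine obstacle here, since the argument is a line-by-line discretization of Theorem~\ref{thm2.8}; the only point demanding care is making the Theorem~\ref{thm2.2} analogue available first and checking that the step $n_0 = \max\{n_{j_1}, n_{j_2}\}$ is legitimate, which it is because the maximum of two natural numbers is again a natural number. Once Theorem~\ref{thm5.3} is in hand, the equivalence with the equality of the superior and inferior limits, the stated goal of the subsection, should follow by comparing this eventual-membership condition with the characterization of the two limits.
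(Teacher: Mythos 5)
Your proposal is correct and is exactly the route the paper intends: for Theorem~\ref{thm5.3} the paper gives no separate argument but states that the result ``can be derived in a similar way to Subsection~\ref{sec2.thm},'' i.e.\ by the same transcription of the proof of Theorem~\ref{thm2.8} (together with the sequence analogue of Theorem~\ref{thm2.2}) that you carry out, replacing $t \in \mathbb{R}$ by $n \in \mathbb{N}$ and thresholds $t_j$ by $n_j$. Your handling of the two cases in the Theorem~\ref{thm2.2} analogue, the step $n_0 = \max\{n_{j_1}, n_{j_2}\}$, and the converse via $A_{n_i} \triangle A \subset X$ all match the paper's argument for Theorem~\ref{thm2.8}.
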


\begin{thm} \label{thm5.1}%%%%%%%%%%%%%%%%%%%%%%%%%%%%%% Theorem 2.3
Let $\{ A_n \}_{n=1}^{\infty}$ be a sequence of sets that converges to a set $A$.
Then, for all $x \in A$, there exists $n_0 \in \mathbb{N}$
such that for all $n \ge n_0$ we have $x \in A_n$.
\end{thm}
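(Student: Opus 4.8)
The plan is to mirror the proof of Theorem~\ref{thm2.3} almost verbatim, substituting the sequence-of-sets counterparts for each ingredient: Definition~\ref{def5.adapt} replaces Definition~\ref{def2.conver}, and Theorem~\ref{thm5.3} plays the role that Theorem~\ref{thm2.8} played there. Since all the purely set-theoretic manipulations are identical, the adaptation should be routine, and the only novelty is invoking the already-adapted necessary-and-sufficient condition for sequences.

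First I would apply Theorem~\ref{thm5.3}. Because $\{ A_n \}_{n=1}^{\infty}$ converges to $A$, for every $x \in X$ there exists $n_0 \in \mathbb{N}$ such that $x \in \{ A_n \triangle A \}^c$ for all $n \ge n_0$. Since $A \subset X$, this statement in particular holds for every $x \in A$. Moreover, because the chosen element $x$ is picked from $A$, I can strengthen the membership to $x \in \{ A_n \triangle A \}^c \cap A$ for all $n \ge n_0$.

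The crucial step is the inclusion $\{ A_n \triangle A \}^c \cap A \subset A_n$, which I would verify by the same computation used in Theorem~\ref{thm2.3}:
\begin{align*}
\{ A_n \triangle A \}^c \cap A
&= \bigl[ \{ A_n \cap A \} \cup \{ A_n^c \cap A^c \} \bigr] \cap A \\
&= \bigl[ \{ A_n \cap A \} \cap A \bigr] \cup \bigl[ \{ A_n^c \cap A^c \} \cap A \bigr] \\
&= A_n \cap A \\
&\subset A_n.
\end{align*}
Given this inclusion, every element of $A$ that belongs to $\{ A_n \triangle A \}^c \cap A$ for all $n \ge n_0$ also belongs to $A_n$ for all $n \ge n_0$. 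Hence, for all $x \in A$ there exists $n_0 \in \mathbb{N}$ such that $x \in A_n$ for all $n \ge n_0$, which is exactly the desired conclusion.

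I do not anticipate a genuine obstacle here; the only point requiring care is the quantifier bookkeeping—making sure the index $n_0$ produced by Theorem~\ref{thm5.3} is carried unchanged through the restriction of the universe $X$ to the subset $A$ and through the intersection with $A$, so that the final existential claim is stated over the same $n_0$. This is the discrete analogue of replacing ``there exists $t_0 \in \mathbb{R}$'' by ``there exists $n_0 \in \mathbb{N}$,'' and no property of $\mathbb{N}$ beyond its serving as the index set is needed.
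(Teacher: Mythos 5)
Your proposal is correct and matches the paper's intent exactly: the paper does not write out a proof of this theorem, stating only that it ``can be derived in a similar way to Subsection~\ref{sec2.thm},'' and your argument is precisely that adaptation --- invoking Theorem~\ref{thm5.3} in place of Theorem~\ref{thm2.8} and repeating the set-theoretic computation $\{ A_n \triangle A \}^c \cap A = A_n \cap A \subset A_n$ from the proof of Theorem~\ref{thm2.3}. The quantifier bookkeeping you flag is handled correctly, so nothing is missing.
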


\begin{cor} \label{cor5.2}
Let $\{ A_n \}_{n=1}^{\infty}$ be a sequence of sets that converges to a set $A$.
Then, for all $x \in A^c$, there exists $n_0 \in \mathbb{N}$
such that for all $n \ge n_0$ we have $x \in A_n^c$.
\end{cor}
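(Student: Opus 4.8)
The plan is to transcribe the proof of Corollary~\ref{cor2.4} into the sequence setting, where the role of Theorem~\ref{thm2.8} is played by Theorem~\ref{thm5.3} and that of Theorem~\ref{thm2.3} by Theorem~\ref{thm5.1}. I would begin by fixing an arbitrary element $x \in A^c$ and then extract, from the convergence of $\{A_n\}_{n=1}^\infty$ to $A$, a threshold index beyond which $x$ stays out of every symmetric difference $A_n \triangle A$.

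Concretely, since $\{A_n\}_{n=1}^\infty$ converges to $A$, Theorem~\ref{thm5.3} gives that for all $x \in X$ there exists $n_0 \in \mathbb{N}$ with $x \in \{A_n \triangle A\}^c$ for all $n \ge n_0$. As $A^c \subset X$, this holds in particular for the chosen $x \in A^c$, and because $x$ itself lies in $A^c$ we may strengthen the conclusion to $x \in \{A_n \triangle A\}^c \cap A^c$ for all $n \ge n_0$. The decisive step, mirroring the set-algebra computation inside the proof of Theorem~\ref{thm2.3}, is the inclusion
\[
\{A_n \triangle A\}^c \cap A^c \subset A_n^c .
\]
This follows from property~$(2)$ of symmetric difference: since $A_n \triangle A = \{A_n \cap A^c\} \cup \{A_n^c \cap A\}$, its complement equals $\{A_n \cap A\} \cup \{A_n^c \cap A^c\}$, and intersecting with $A^c$ (using the distributive law) annihilates the first term and leaves $A_n^c \cap A^c \subset A_n^c$. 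Substituting back gives $x \in A_n^c$ for all $n \ge n_0$, and since $x \in A^c$ was arbitrary the corollary follows.

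An equally short alternative would mirror the paper's proof of Corollary~\ref{cor2.4} even more closely: first note the sequence analogue of Theorem~\ref{thm2.start} --- namely that $\{A_n\}$ converges to $A$ if and only if $\{A_n^c\}$ converges to $A^c$, which is immediate from the identity $A \triangle B = A^c \triangle B^c$ --- and then apply Theorem~\ref{thm5.1} to $\{A_n^c\}$ converging to $A^c$. I do not expect a genuine obstacle in either route; the only point requiring care is the elementary identity $\{A_n \triangle A\}^c \cap A^c = A_n^c \cap A^c$, and the argument is otherwise a faithful relabeling of the $t \to \infty$ proof with the continuum index $t$ replaced by the discrete index $n$ and $t_0 \in \mathbb{R}$ by $n_0 \in \mathbb{N}$.
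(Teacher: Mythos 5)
Your proposal is correct. Note that the paper never writes out a proof of Corollary~\ref{cor5.2}: it only remarks that the results of Subsection~\ref{adapt} ``can be derived in a similar way to Subsection~\ref{sec2.thm}'', so its implied proof is the verbatim analogue of the proof of Corollary~\ref{cor2.4} --- first establish the sequence analogue of Theorem~\ref{thm2.start} (that $\{A_n\}$ converges to $A$ if and only if $\{A_n^c\}$ converges to $A^c$), then apply Theorem~\ref{thm5.1} to $\{A_n^c\}$ converging to $A^c$. That is exactly your second route. Your primary route is a genuine, if mild, variation: it derives the corollary directly from Theorem~\ref{thm5.3} together with the identity $\{A_n \triangle A\}^c \cap A^c = A_n^c \cap A^c$, bypassing both Theorem~\ref{thm5.1} and the complementation lemma; this parallels how the paper deduces Theorem~\ref{thm2.3} from Theorem~\ref{thm2.8}, with $A$ replaced throughout by $A^c$. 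What the direct route buys is economy of hypotheses --- it invokes only results the paper explicitly lists in Subsection~\ref{adapt}, whereas the complementation analogue is never stated there and would have to be supplied --- while the complementation route buys structural fidelity to the $t \to \infty$ development. Your set algebra is also correct: $\{A_n \triangle A\}^c = \{A_n \cap A\} \cup \{A_n^c \cap A^c\}$, and intersecting with $A^c$ annihilates the first term, leaving $A_n^c \cap A^c \subset A_n^c$.
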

These theorems can be derived in a similar way to Subsection~\ref{sec2.thm}.
Using theorems above, we first prove the following:

\begin{thm} \label{thm5.4}
Let $\{ A_n \}_{n=1}^{\infty}$ be a sequence of sets and $A$ a set. Then,
$\{ A_n \}_{n=1}^{\infty}$ converges to $A$ if and only if we have that
for all $x \in A$ there exists $n_1 \in \mathbb{N}$
such that for all $n \ge n_1$ we have $x \in A_n$,
and that for all $x \in A^c$ there exists $n_2 \in \mathbb{N}$
such that for all $n \ge n_2$ we have $x \in A_n^c$.
\end{thm}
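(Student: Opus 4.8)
The plan is to prove the two directions separately, leaning on the theorems already adapted to sequences of sets in this subsection.

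For the forward implication, I would assume $\{A_n\}_{n=1}^\infty$ converges to $A$ and simply invoke the results already available. By Theorem~\ref{thm5.1}, the convergence immediately gives that for all $x \in A$ there exists $n_1 \in \mathbb{N}$ such that for all $n \ge n_1$ we have $x \in A_n$. Likewise, Corollary~\ref{cor5.2} gives that for all $x \in A^c$ there exists $n_2 \in \mathbb{N}$ such that for all $n \ge n_2$ we have $x \in A_n^c$. This direction is therefore essentially a restatement of the two earlier results and requires no further work.

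For the converse, I would assume both conditions and aim to verify the criterion of Theorem~\ref{thm5.3}, namely that for all $x \in X$ there exists $n_0 \in \mathbb{N}$ such that for all $n \ge n_0$ we have $x \in \{A_n \triangle A\}^c$. The key idea is to split on whether $x$ lies in $A$ or in $A^c$, which exhausts $X$ since $X = A \cup A^c$. In the case $x \in A$, the first hypothesis supplies $n_1$ with $x \in A_n$ for all $n \ge n_1$, so that $x \in A_n \cap A$; using the identity $A_n \triangle A = \{A_n \cap A^c\} \cup \{A_n^c \cap A\}$, the membership $x \in A_n \cap A$ forces $x \notin A_n \triangle A$. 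In the case $x \in A^c$, the second hypothesis supplies $n_2$ with $x \in A_n^c$ for all $n \ge n_2$, so that $x \in A_n^c \cap A^c$, which again yields $x \notin A_n \triangle A$. Choosing $n_0$ to be whichever of $n_1, n_2$ is relevant to the case at hand, I obtain $x \in \{A_n \triangle A\}^c$ for all $n \ge n_0$, and Theorem~\ref{thm5.3} then delivers the convergence.

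The only delicate point is the set-theoretic verification inside the converse, where one must confirm that membership of $x$ in $A_n \cap A$ (respectively $A_n^c \cap A^c$) precludes membership in either piece of the symmetric difference. This is a short computation using property~$(2)$ of the symmetric difference from the Settings, and I do not expect any genuine obstacle; the bulk of the argument is the routine bookkeeping of carrying the two hypotheses through the case split.
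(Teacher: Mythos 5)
Your proposal is correct and follows essentially the same route as the paper: the forward direction by citing Theorem~\ref{thm5.1} and Corollary~\ref{cor5.2}, and the converse by splitting $X = A \cup A^c$ and verifying the criterion of Theorem~\ref{thm5.3}. The only cosmetic differences are that you use the identity $A_n \triangle A = \{A_n \cap A^c\} \cup \{A_n^c \cap A\}$ directly to rule out membership in the symmetric difference, where the paper works with the complementary identity $\{A_n \triangle A\}^c = \{A_n \cap A\} \cup \{A_n^c \cap A^c\}$, and that you pick $n_0$ case by case rather than taking a maximum; both amount to the same argument.
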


\begin{proof}
Suppose that $\{ A_n \}_{n=1}^{\infty}$ converges to $A$.
It follows from Theorem~\ref{thm5.1}
that for all $x \in A$ there exists $n_1 \in \mathbb{N}$
such that for all $n \ge n_1$ we have $x \in A_n$.
On the other hand, it follows from Corollary~\ref{cor5.2}
that for all $x \in A^c$ there exists $n_2 \in \mathbb{N}$
such that for all $n \ge n_2$ we have $x \in A_n^c$.

Next suppose that for all $x \in A$ there exists $n_1 \in \mathbb{N}$
such that for all $n \ge n_1$ we have $x \in A_n$,
and that for all $x \in A^c$ there exists $n_2 \in \mathbb{N}$
such that for all $n \ge n_2$ we have $x \in A_n^c$.
Then, on the one hand, it holds that for all $x \in A$ there exists $n_1 \in \mathbb{N}$
such that for all $n \ge n_1$ we have $x \in A_n \cap A$,
because we pick the element $x$ from $A$.
Similarly, on the other hand, it holds that for all $x \in A^c$ there exists $n_2 \in \mathbb{N}$
such that for all $n \ge n_2$ we have $x \in A_n^c$.
Note that $A_n \cap A \subset \{ A_n \triangle A \}^c$
and that $A_n^c \cap A^c \subset \{ A_n \triangle A \}^c$,
because $\{ A_n \triangle A\}^c
=\{ A_n \cap A \} \cup \{ A_n^c \cap A^c \}$.
Since $A_n \cap A$ is a subset of $\{ A_n \triangle A \}^c$,
we see that every element of $A$ belongs to $\{ A_n \triangle A \}^c$
for all $n \ge n_1$.
Thus, for all $x \in A$ there exists $n_1 \in \mathbb{N}$
such that for all $n \ge n_1$ we have $x \in \{ A_n \triangle A \}^c$.
Similarly, since $A_n^c \cap A^c$ is subset of $\{ A_n \triangle A \}^c$,
we see that every element of $A^c$ belongs to $\{ A_n \triangle A \}^c$
for all $n \ge n_2$.
Thus, for all $x \in A^c$ there exists $n_2 \in \mathbb{N}$
such that for all $n \ge n_2$ we have $x \in \{ A_n \triangle A \}^c$.
Let $n_0 = \max \{ n_1, n_2 \}$.
Then, we see that every element of both $A$ and $A^c$ belongs to
$\{ A_n \triangle A \}^c$ for all $n \ge n_0$.
Thus, for all $x \in A \cup A^c$ there exists $n_0 \in \mathbb{N}$
such that for all $n \ge n_0$ we have $x \in \{ A_n \triangle A \}^c$.
Since $A \cup A^c = X$, it follows that for all $x \in X$ there exists $n_0 \in \mathbb{N}$
such that for all $n \ge n_0$ we have $x \in \{ A_n \triangle A \}^c$.
By Theorem~\ref{thm5.3}, we conclude that
$\{ A_n \}_{n=1}^{\infty}$ converges to $A$.

We have thus completed the proof.
\end{proof}

Here, we refer to Theorem 1.3.12 of Niizeki~\cite{Niizeki}.
Let $\{ A_n \}_{n=1}^{\infty}$ be a sequence of sets and $A$ a set.
The theorem states that
{\tiny $\displaystyle \bigcap_{n=1}^{\infty} \bigcup_{k=n}^{\infty}$} $A_k$ =
$A$ = {\tiny
$\displaystyle \bigcup_{n=1}^{\infty} \bigcap_{k=n}^{\infty}$} $A_k$
if and only if
for all $(x , y) \in A \times A^c$, there exists $n_{(x,y)} \in \mathbb{N}$
such that for all $n \ge n_{(x,y)}$ we have $(x , y) \in A_n \times A_n^c$.
Equivalently,
{\tiny $\displaystyle \bigcap_{n=1}^{\infty} \bigcup_{k=n}^{\infty}$} $A_k$ =
$A$ = {\tiny
$\displaystyle \bigcup_{n=1}^{\infty} \bigcap_{k=n}^{\infty}$} $A_k$
if and only if we have that
for all $x \in A$ there exists $n_1 \in \mathbb{N}$
such that for all $n \ge n_1$ we have $x \in A_n$,
and that for all $y \in A^c$ there exists $n_2 \in \mathbb{N}$
such that for all $n \ge n_2$ we have $y \in A_n^c$.

Thus, we find from Theorem~\ref{thm5.4} that
Definition~\ref{def5.adapt} is equivalent
to the equality of the superior limit and the inferior limit of the sequence $\{ A_n \}_{n=1}^{\infty}$.
This fact ensures that both Definition~\ref{def5.adapt} and the equality
of the two limits could serve as a definition of convergence of
sequences of sets.
\footnote{
In addition, Theorem~\ref{thm5.3} could also serve as the definition.
}

\appendix
\section{Appendix}
\label{appa}
Appendix provides other proofs of Theorem~\ref{thm2.8} and~\ref{thm2.3}
in Subsection~\ref{sec2.thm}. We first prove Theorem~\ref{thm2.3} by Theorem~\ref{thm2.start}
and~\ref{thm2.2}, and then prove Theorem~\ref{thm2.8} by Theorem~\ref{thm2.3}.

Note that the order of the proofs of Theorem~\ref{thm2.8} and~\ref{thm2.3} in
Appendix is the opposite order of those in Subsection~\ref{sec2.thm}.
In Appendix, the proof of Theorem~\ref{thm2.3} does not require Theorem~\ref{thm2.8},
whereas the proof of Theorem~\ref{thm2.8} requires Theorem~\ref{thm2.3}.

The next theorem is Theorem~\ref{thm2.3} in Subsection~\ref{sec2.thm}.
In proving it, be careful that we can apply Theorem~\ref{thm2.start} and~\ref{thm2.2}.
\begin{thm} \label{A.1} %%%%%%%%%%%%%%%%%%%%%%%%%%%%%%%%%%%%%%%%%%%%%%%%%%%%%
Let $A(t)$ be a set-valued function that converges to limit $A$ as $t \to\infty$.
Then, for all $x \in A$, there exists $t_0 \in \mathbb{R}$
such that for all $t \ge t_0$ we have $x \in A(t)$.
\end{thm}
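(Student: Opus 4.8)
The plan is to route the proof through the complement of the symmetric difference, exactly as the body text does for Theorem~\ref{thm2.8}, but drawing only on Theorem~\ref{thm2.start}, Theorem~\ref{thm2.2} and Definition~\ref{def2.conver}. I would first isolate the intermediate claim that for every $x \in A$ there is a $t_0 \in \mathbb{R}$ with $x \in \{ A(t) \triangle A \}^c$ for all $t \ge t_0$, and then convert this into the desired membership $x \in A(t)$ by a one-line set identity. The identity I would use is $\{ A(t) \triangle A \}^c = \{ A(t) \cap A \} \cup \{ A(t)^c \cap A^c \}$; intersecting with $A$ annihilates the second term, so that $\{ A(t) \triangle A \}^c \cap A = A(t) \cap A \subset A(t)$. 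Hence, once an $x \in A$ satisfies $x \in \{ A(t) \triangle A \}^c$, the condition $x \in A$ forbids the branch $x \in A(t)^c \cap A^c$ and forces $x \in A(t)$.

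To secure the intermediate claim I would fix an arbitrary $x \in A$ together with an arbitrary reference point $t_i \in \mathbb{R}$ and split on the location of $x$ relative to $A(t_i) \triangle A$. If $x \in A(t_i) \triangle A$, Definition~\ref{def2.conver} applied at this $t_i$ and $x$ produces a $t_0$ with $x \in \{ A(t) \triangle A \}^c$ for all $t \ge t_0$. If instead $x \in \{ A(t_i) \triangle A \}^c$, Theorem~\ref{thm2.2} applied at this $t_i$ and $x$ produces the same sort of $t_0$. Because $A(t_i) \triangle A$ and $\{ A(t_i) \triangle A \}^c$ partition the universe $X$, the two cases are exhaustive, so the intermediate claim holds for every $x \in A$. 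Theorem~\ref{thm2.start} enters as the natural companion tool: applying it converts $\lim_{t\to\infty} A(t) = A$ into $\lim_{t\to\infty} A(t)^c = A^c$, which is precisely what powers the mirror-image statement for $A^c$ and thereby Corollary~\ref{cor2.4}; here the identity $A(t) \triangle A = A(t)^c \triangle A^c$ shows that the two hypotheses carry the same content, so the case analysis above is the real engine.

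The step I expect to be the crux is not computational but conceptual: Definition~\ref{def2.conver} and Theorem~\ref{thm2.2} each only tell us that $x$ eventually leaves, or never returns to, $A(t) \triangle A$, and neither directly asserts $x \in A(t)$. The work is to see that being ``eventually in $\{ A(t) \triangle A \}^c$'' together with the standing hypothesis $x \in A$ is enough, via the complement identity, to pin down $x \in A(t)$. I would therefore take care to establish the ``eventually in the complement'' statement first, in full, and only then apply the set identity, so that the hypothesis $x \in A$ is used exactly once and at the right moment.
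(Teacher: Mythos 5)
Your proposal is correct and follows essentially the same route as the paper's own (Appendix) proof of Theorem~\ref{A.1}: both use Definition~\ref{def2.conver} on $A(t_i) \triangle A$ and Theorem~\ref{thm2.2} on its complement to show that every $x \in A$ eventually lies in $\{ A(t) \triangle A \}^c$, and both then finish with the identity $\{ A(t) \triangle A \}^c \cap A = A(t) \cap A \subset A(t)$. The only cosmetic difference is that you case-split element by element (so no maximum of thresholds is needed), whereas the paper decomposes $A = \{ A(t_i)^c \cap A \} \cup \{ A(t_i) \cap A \}$ and recombines via the distributive law; the logical content is identical.
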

\begin{proof}
Since $A(t)$ converges to $A$ as $t \to \infty$,
for all $t_{i_1} \in \mathbb{R}$ and for all $x \in A(t_{i_1}) \triangle A$,
there exists $t_{j_1} \in \mathbb{R}$ such that 
for all $t \ge t_{j_1}$ we have $x \in  \{ A(t) \triangle A \}^c$.
Notice that $\{ A(t_{i_1})^c \cap A \} \subset \{ A(t_{i_1}) \triangle A \}$, because
\begin{align*}
 A(t_{i_1}) \triangle A
&= \{ A(t_{i_1}) \cup A\} \cap \{ A(t_{i_1})^c \cup A^c \} \\
&\supset A \cap A(t_{i_1})^c.
\end{align*}
Since  $A(t_{i_1})^c \cap A$ is a subset of $A(t_{i_1}) \triangle A$,
we see that every element of
 $A(t_{i_1})^c \cap A$ belongs to $\{ A(t) \triangle A \}^c$
for all $t \ge t_{j_1}$. Hence,
for all $t_{i_1} \in \mathbb{R}$ and for all $x \in A(t_{i_1})^c \cap A$
there exists $t_{j_1} \in \mathbb{R}$ such that
for all $t \ge t_{j_1}$ we have $x \in  \{ A(t) \triangle A \}^c$.

On the other hand, it holds by Theorem~\ref{thm2.2} that
for all $t_{i_2} \in \mathbb{R}$ and for all $x \in \{ A(t_{i_2}) \triangle A \}^c$
there exists $t_{j_2} \in \mathbb{R}$ such that
for all $t \ge t_{j_2}$ we have $x \in  \{ A(t) \triangle A \}^c$.
Notice that $\{ A(t_{i_2}) \cap A \} \subset \{ A(t_{i_2}) \triangle A \}^c$, because
\begin{align*}
\{ A(t_{i_2}) \triangle A \}^c 
&= \{ A(t_{i_2}) \cup A\}^c \cup \{ A(t_{i_2}) \cap A \} \\
&\supset \{ A(t_{i_2}) \cap A \} .
\end{align*}
Since $A(t_{i_2}) \cap A$ is a subset of $\{ A(t_{i_2}) \triangle A \}^c$,
we see that every element of
$A(t_{i_2}) \cap A$ belongs to $\{ A(t) \triangle A \}^c$
for all $t \ge t_{j_2}$.
Hence,
for all $t_{i_2} \in \mathbb{R}$ and for all $x \in A(t_{i_2}) \cap A$
there exists $t_{j_2} \in \mathbb{R}$ such that
for all $t \ge t_{j_2}$ we have $x \in  \{ A(t) \triangle A \}^c$.

Take the same point $t_i$ for $t_{i_1}$, $t_{i_2}$ and let
$t_0 = \max \{ t_{j_1}, t_{j_2} \}$. Then, every element of both
$A(t_i)^c \cap A$ and  $A(t_i) \cap A$ belongs to $\{ A(t) \triangle A \}^c$
for all $t \ge t_0$. Thus,
for all $t_i \in \mathbb{R}$ and for all $x \in \{ A(t_i)^c \cap A \} \cup
\{A(t_i) \cap A \}$,
there exists $t_0 \in \mathbb{R}$ such that
for all $t \ge t_0$
we have $x \in  \{ A(t) \triangle A \}^c$.
By using a distributive law, we can derive the equation
$\{ A(t_i)^c \cap A \} \cup \{A(t_i) \cap\nolinebreak  A \} = A$ as follows:
\begin{align*}
\{ A(t_i)^c \cap A \} \cup \{A(t_i) \cap A \} &=
\{ A(t_i)^c \cup A(t_i) \} \cap A \\
&= X \cap A \\
&= A.
\end{align*}
Therefore,
for all $t_i \in \mathbb{R}$ and for all $x \in A$,
there exists $t_j \in \mathbb{R}$ such that
for all $t \ge t_0$
we have $x \in  \{ A(t) \triangle A \}^c$.
Since the set $A$ is independent of the choice of $t_i$,
we can leave out the beginning part ``for all $t_i \in \mathbb{R}$".
Thus,
for all $x \in A$
there exists $t_0 \in \mathbb{R}$ such that
for all $t \ge t_0$
we have $x \in  \{ A(t) \triangle A \}^c$.
Then, this substantially implies that
for all $x \in A$
there exists $t_0 \in \mathbb{R}$ such that
for all $t \ge t_0$
we have $x \in  \{ A(t) \triangle A \}^c \cap A$,
because we pick the element $x$ from $A$.

To prove Theorem~\ref{A.1}, we next show that
\begin{equation*}
\bigl[ \{ A(t) \triangle A \}^c \cap A \bigr] \subset A(t).
\end{equation*}
We can derive the inclusion above by using a distributive law. That is,
\begin{align*}
\bigl[ \{ A(t) \triangle A \}^c \cap A \bigr]
%&= \bigl[ \{ A(t) \cup A \} \cap \{ A(t) \cap A \}^c \bigr]^c \cap A \\
&= \bigl[ \{ A(t)^c \cap A^c \} \cup \{ A(t) \cap A \} \bigr] \cap A \\
&= \big[ \{ A(t)^c \cap A^c \} \cap A \bigr] \cup
\bigl[ \{ A(t) \cap A \} \cap A \bigr] \\
%&= \big[ \{ A(t)^c \cap A^c \} \cap A \bigr] \cup
%\bigl[ \{ A(t) \cap A \} \cap A \bigr] \\
&= \big[ A(t)^c \cap \{A^c \cap A \} \bigr] \cup
\bigl[ A(t) \cap \{ A \cap A \} \bigr] \\
%&= \{ A(t)^c \cap \emptyset \} \cup \{ A(t) \cap A \} \\
&= \emptyset \cup [ A(t) \cap A ] \\
&= A(t) \cap A \\
&\supset A(t).
\end{align*}
Since $\{ A(t) \triangle A \}^c \cap A$ is a subset of $A(t)$ for all $t \in \mathbb{R}$,
we see that every element of $A$ belongs to
$A(t)$ for all $t \ge t_0$. Thus, we conclude that
for all $x \in A$
there exists $t_0 \in \mathbb{R}$ such that
for all $t \ge t_0$
we have $x \in A(t)$.
Therefore, we have proved Theorem~\ref{A.1}.
\end{proof}

Using Theorem~\ref{A.1}, we can get the next corollary.
\begin{cor} \label{A.2} %%%%%%%%%%%%%%%%%%%%%%%%%%%%%%%%%%%%%%%%%%%%%%%%%%%%%%%%%%
Let $A(t)$ be a set-valued function that converges to limit $A$ as $t \to\infty$.
Then, for all $x \in A^c$, there exists $t_0 \in \mathbb{R}$
such that for all $t \ge t_0$ we have $x \in A(t)^c$.
\end{cor}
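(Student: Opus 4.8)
The plan is to mirror the argument given for Corollary~\ref{cor2.4} in the main text, now substituting the appendix version Theorem~\ref{A.1} for Theorem~\ref{thm2.3}. The whole proof reduces to a two-step application of results already available at this point in the Appendix, namely Theorem~\ref{thm2.start} and Theorem~\ref{A.1}.

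First I would pass to complements using Theorem~\ref{thm2.start}. Since $A(t)$ converges to $A$ as $t \to \infty$, that theorem (``a set-valued function converges if and only if its complement converges'') yields immediately that $A(t)^c$ converges to $A^c$ as $t \to \infty$. This is the only genuine input step, and it requires nothing beyond citing the theorem.

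Next I would apply Theorem~\ref{A.1} with the set-valued function $A(t)^c$ and the limit set $A^c$ playing the roles of $A(t)$ and $A$. Theorem~\ref{A.1} asserts that every element of the limit eventually belongs to the set-valued function; specialized here it states that for all $x \in A^c$ there exists $t_0 \in \mathbb{R}$ such that for all $t \ge t_0$ we have $x \in A(t)^c$. That is precisely the conclusion of the corollary, so the proof terminates at this point.

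There is essentially no hard step: the statement is a direct corollary. The one item worth checking is that no circularity is introduced, i.e.\ that Theorem~\ref{A.1} may legitimately be invoked here. This is fine, because the proof of Theorem~\ref{A.1} relies only on Theorem~\ref{thm2.start} and Theorem~\ref{thm2.2} and does not depend on the present corollary. Thus the only care needed is to name the roles of the sets correctly when feeding $A(t)^c$ and $A^c$ into Theorem~\ref{A.1}.
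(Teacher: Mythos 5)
Your proof is correct and is exactly the paper's argument: the paper states that Corollary~\ref{A.2} is proved in the same way as Corollary~\ref{cor2.4}, namely by invoking Theorem~\ref{thm2.start} to pass to $A(t)^c$ and $A^c$, and then applying the eventual-membership theorem (Theorem~\ref{A.1} in the Appendix, Theorem~\ref{thm2.3} in the main text) to the complemented function. Your check that no circularity arises, since Theorem~\ref{A.1} depends only on Theorems~\ref{thm2.start} and~\ref{thm2.2}, matches the paper's ordering of the Appendix as well.
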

This corollary is  Corollary~\ref{cor2.4} in Subsection~\ref{sec2.thm}.
The proof of Corollary~\ref{A.2} is the same as that of  Corollary~\ref{cor2.4}.

The next theorem is Theorem~\ref{thm2.8} in Subsection~\ref{sec2.thm}.

\begin{thm} \label{A.3}%%%%%%%%%%%%%%%%%%%%%%%%%%%%%%%%%%%%%%%%%%%%%%%%%%%
Let $A(t)$ be a set-valued function and $A$ a set. Then,
$A(t)$ converges to $A$ as $t \to \infty$ if and only if
for all $x \in X$, there exists $t_0 \in \mathbb{R}$ such that
for all $t \ge t_0$ we have $x \in \{ A(t) \triangle A \}^c$.
\end{thm}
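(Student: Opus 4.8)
The plan is to prove the two implications separately, using Theorem~\ref{A.1} and Corollary~\ref{A.2} for the forward direction—this is precisely what keeps the Appendix argument independent of Theorem~\ref{thm2.8} itself, since there we reversed the logical order.

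First I would assume that $A(t)$ converges to $A$ as $t \to \infty$ and establish the elementwise condition for every $x \in X$ by splitting $X$ into $A$ and $A^c$. By Theorem~\ref{A.1}, for all $x \in A$ there exists $t_0 \in \mathbb{R}$ such that $x \in A(t)$ for all $t \ge t_0$; by Corollary~\ref{A.2}, for all $x \in A^c$ there exists $t_0 \in \mathbb{R}$ such that $x \in A(t)^c$ for all $t \ge t_0$. I would then invoke the identity
\begin{equation*}
\{ A(t) \triangle A \}^c = \{ A(t) \cap A \} \cup \{ A(t)^c \cap A^c \},
\end{equation*}
which yields the containments $\{ A(t) \cap A \} \subset \{ A(t) \triangle A \}^c$ and $\{ A(t)^c \cap A^c \} \subset \{ A(t) \triangle A \}^c$. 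Hence if $x \in A$, then eventually $x \in A(t) \cap A \subset \{ A(t) \triangle A \}^c$; and if $x \in A^c$, then eventually $x \in A(t)^c \cap A^c \subset \{ A(t) \triangle A \}^c$. Since each $x \in X$ lies in exactly one of $A$ or $A^c$, in either case there exists $t_0$ with $x \in \{ A(t) \triangle A \}^c$ for all $t \ge t_0$, which is the desired conclusion.

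For the converse, I would assume the elementwise condition and recover Definition~\ref{def2.conver} directly, exactly as in the second half of the original proof of Theorem~\ref{thm2.8}. Fixing an arbitrary $t_i \in \mathbb{R}$ and an arbitrary $x \in A(t_i) \triangle A$, and using that $A(t_i) \triangle A$ is a subset of $X$, the hypothesis supplies some $t_j$ with $x \in \{ A(t) \triangle A \}^c$ for all $t \ge t_j$. Since $t_i$ and $x$ were arbitrary, Definition~\ref{def2.conver} is satisfied and $A(t)$ converges to $A$ as $t \to \infty$.

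I do not expect a serious obstacle: the backward direction is routine, and the forward direction is essentially bookkeeping—splitting $X = A \cup A^c$ and applying the two containments. The only point needing a little care is that the argument must treat $A$ and $A^c$ as a genuine disjoint cover of $X$; because each $x$ belongs to just one of the two pieces, a single threshold suffices for each element and no $\max$ of two thresholds is required, in contrast to the original proof of Theorem~\ref{thm2.8}, where both $A(t_i) \triangle A$ and its complement had to be combined at a common $t_0$.
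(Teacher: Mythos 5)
Your proposal is correct and follows essentially the same route as the paper's Appendix proof: the forward direction combines Theorem~\ref{A.1} and Corollary~\ref{A.2} with the identity $\{A(t)\triangle A\}^c=\{A(t)\cap A\}\cup\{A(t)^c\cap A^c\}$ to cover $X=A\cup A^c$, and the backward direction repeats the second half of the proof of Theorem~\ref{thm2.8}. Your remark that no $\max$ of thresholds is needed—since each $x$ lies in exactly one of $A$ or $A^c$, a single threshold per element suffices—is a small but valid streamlining of the paper's argument, which takes $t_0=\max\{t_1,t_2\}$ even though the quantifier order (for all $x$, there exists $t_0$) makes that step superfluous.
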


\begin{proof}
Suppose that $A(t)$ converges to $A$ as $t \to \infty$.
Then, we have from Theorem~\ref{A.1} that
for all $x \in A$
there exists $t_1 \in \mathbb{R}$ such that
for all $t \ge t_1$
we have $x \in A(t)$.
The statement above substantially implies
for all $x \in A$
there exists $t_1 \in \mathbb{R}$ such that
for all $t \ge t_1$
we have $x \in A(t) \cap A$,
because we pick the element $x$ from $A$.
On the other hand, we have from Corollary~\ref{A.2} that
for all $x \in A^c$
there exists $t_2 \in \mathbb{R}$ such that 
for all $t \ge t_2$
we have $x \in A(t)^c$.
The statement above substantially implies
for all $x \in A^c$
there exists $t_2 \in \mathbb{R}$ such that 
for all $t \ge t_2$
we have $x \in A(t)^c \cap A^c$.
Notice that $\{ A(t) \cap A \} \subset \{ A(t) \triangle A\}^c$
and $\{A(t)^c \cap A^c \} \subset \{ A(t) \triangle A\}^c$ because
$\{ A(t) \triangle A\}^c
=\{ A(t) \cap A \} \cup \{ A(t)^c \cap A^c \}$.
Since $A(t) \cap A$ is a subset of $\{ A(t) \triangle A\}^c$
for all $t \in \mathbb{R}$, we see that
every element of $A$ belongs to $\{ A(t) \triangle A\}^c$ for all $t \ge t_1$.
Thus,
for all $x \in A$
there exists $t_1 \in \mathbb{R}$ such that 
for all $t \ge t_1$
we have $x \in \{ A(t) \triangle A\}^c$.
Similarly, since $A(t)^c \cap A^c$ is a subset of $\{ A(t) \triangle A\}^c$
for all $t \in \mathbb{R}$, we see that
every element of $A^c$ belongs to $\{ A(t) \triangle A\}^c$ for all $t \ge t_2$.
Thus,
for all $x \in A^c$
there exists $t_2 \in \mathbb{R}$ 
for all $t \ge t_2$
we have $x \in \{ A(t) \triangle A\}^c$.
Let $t_0 = \max \{ t_1,t_2\}$.
Then, every element of both $A$ and $A^c$ belongs to $\{ A(t) \triangle A\}^c$
for all $t \ge t_0$. Hence,
for all $x \in A \cup A^c$
there exists $t_0 \in \mathbb{R}$ such that 
for all $t \ge t_0$
we have $x \in \{ A(t) \triangle A\}^c$.
Since $A \cup A^c = X$,
for all $x \in X$
there exists $t_0 \in \mathbb{R}$ such that 
for all $t \ge t_0$
we have $x \in \{ A(t) \triangle A\}^c$.

We can prove the converse in the same way as the proof of Theorem~\ref{thm2.8}
in Subsection~\ref{sec2.thm}

Therefore, we have proved the theorem.
\end{proof}

\vspace{6pt}

\begin{flushleft}
{\large \textsc{TAKEFUMI FUJIMOTO}} \\*
\vspace{3pt}
%Department of International Studies\\*
%School of Frontier Sciences\\*
%The University of Tokyo \\*
E-mail address: the-gift-of-31t@hotmail.co.jp; 2876681407@edu.k.u-tokyo.ac.jp
\end{flushleft}


\begin{thebibliography}{9}

\bibitem{Ni2}
S.Niizeki, A Topology on a Power Set of a Set and Convergence of a Sequence of Sets,
Tamsui Oxford Journal of Mathematical Sciences 21(1) (2005) 21-31.

\bibitem{Niizeki}
S.Niizeki,
The symmetric difference of sets and some limit theorems of sequences of sets (II),
Communications in Mathematics and Physics 2(5) (2012), 3-11.

\end{thebibliography}
\end{document}